\newcommand{\notiff}{%
  \mathrel{{\ooalign{\hidewidth$\not\phantom{"}$\hidewidth\cr$\iff$}}}}
\newtheorem{teor}{Theorem}[section]
\newtheorem{lema}[teor]{Lemma}
\newtheorem{prop}[teor]{Proposition}
\newtheorem{fact}[teor]{Fact}
\newtheorem{defin}[teor]{Definition}
\newtheorem*{claim}{Claim}
\newtheorem*{prop*}{Proposition}
\newtheorem{notation}{Notation}
\newtheorem{cor}[teor]{Corollary}
\newtheorem{counterexample}[teor]{Counterexample}
\newtheorem{remark}[teor]{Remark}
\newtheorem{external claim}[teor]{Claim}
\newtheorem{example}[teor]{Example}
\newcommand{\C}{{\mathfrak C}}
\newcommand{\R}{\mathbb{R}}
\newcommand{\mL}{\mathcal{L}}
\newcommand{\I}{\mathcal{I}}
\newcommand{\II}{\mathbf{I}}
\DeclareMathOperator{\tp}{{tp}}
\DeclareMathOperator{\qftp}{{tp^{qf}}}
\DeclareMathOperator{\emtp}{{EMtp}}
\DeclareMathOperator{\Th}{{Th}}
\DeclareMathOperator{\aut}{{Aut}}
\DeclareMathOperator{\age}{{Age}}
\DeclareMathOperator{\Ind}{{Ind}}
\newcommand{\orcidlogo}{\includegraphics[height=\fontcharht\font`\B]{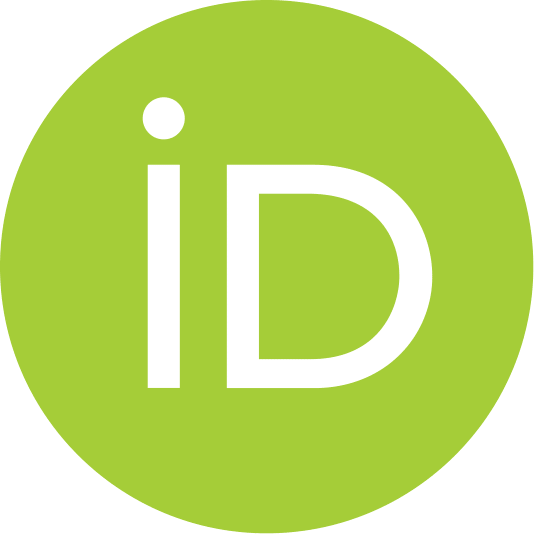}}
\newcommand{\orcid}[1]{\href{#1}{\orcidlogo #1}}
\title{$n$-dependent continuous theories and hyperdefinable sets }
\author{Adrian Portillo Fernández}
\address{Adri\'{a}n Portillo \orcid{https://orcid.org/0000-0001-9354-8574}}
\email{Adrian.Portillo-Fernandez@math.uni.wroc.pl}
\date{January 2024}
\thanks{\noindent The author was supported by the Narodowe Centrum Nauki grant no. 2016/22/E/ST1/00450.}
\address{Instytut Matematyczny Uniwersytetu Wroc{\l}awskiego, pl. Grunwaldzki 2, 50-384 Wroc{\l}aw, Poland}
\keywords{n-dependence, continuous logic, hyperimaginary.}
\subjclass[2020]{03C45}
\begin{document}
\begin{abstract}
    We define the continuous modeling property for first-order structures and show that a first-order structure has the continuous modeling property if and only if its age has the embedding Ramsey property. We use generalized indiscernible sequences in continuous logic to study and characterize $n$-dependence  for continuous theories and first-order hyperdefinable sets in terms of the collapse of indiscernible sequences.
\end{abstract}
\maketitle
\section{Introduction}
%%%Adrian: Should I mention in the abstract/introduction about the mistake in the original paper?

Continuous model theory is a growing area of model theory that has been developing very fast in recent years. Many of the most important dividing lines for first-order theories have been also defined for continuous theories (Stability \cite{MR2657678, MR2723787}, NIP \cite{ContVCclasses}, Distality \cite{anderson2023fuzzy}). One invaluable tool for the characterization of dividing lines in first-order theories is (generalized) indiscernible sequences (See \cite[Chaper VII]{Shelah1982-SHECTA-5}, \cite{SCOW20121624}, \cite{Chernikov2014OnN}, \cite{Guingona2019}). We present natural continuous counterparts of generalized indiscernibles and the modeling property (where the index structures are still first-order) and show that a first-order structure has the continuous modeling property (see Definition \ref{definition: CMP}) if and only if its age has the embedding Ramsey property (Theorem \ref{CMP iff Ramsey}). Several notions around this topic have been also defined in positive logic. Dobrowolski and Kamsma (see \cite{Dobrowolski2021KimindependenceIP}) proved that $s$-trees have the (positive logic version of) modeling property in thick theories, later in  \cite[Theorems 1.2, 1.2 and 1.3]{kamsma2023positive} it was shown that $str$-trees, $str_0$-trees (the reduct of $str$-trees that forgets the length comparison relation) and arrays also have the modeling property in positive thick theories.
 
The notion of a dependent theory was first introduced by Shelah in \cite{Shelah1982-SHECTA-5}. 
%MENTION SOME APPLICATIONS. 
In later work \cite{Shelah2005StronglyDT, Shelah2007DefinableGF}, Shelah introduced the more general notion of $n$-dependence. This notion was studied in depth in \cite{Chernikov2014OnN}, where the authors give a characterization of $n$-dependent theories in terms of the collapse of indiscernible sequences (See \cite[Theorem 5.4]{Chernikov2014OnN}). In the continuous context, the definition of $n$-dependence was introduced in \cite{Chernikov2020HypergraphRA} using a generalization of the $VC_n$ dimension. Section 10 of the aforementioned paper is dedicated to several operations which preserve $n$-dependence.  The proof of \cite[Theorem 5.4]{Chernikov2014OnN} contains a mistake \footnote{After sending the manuscript to the authors of \cite{Chernikov2014OnN}, they acknowledged that there is a mistake and proposed a short correction that we discuss at the end of Section \ref{section:n-dep}} in the implication $(3)\implies (2)$; we provide a counterexample to the key claim (see Counterexample \ref{counterexample}). Using  the tools developed in Section \ref{section: modelling property} we give an alternative proof of the theorem, obtaining generalizations of \cite[Theorem 5.4]{Chernikov2014OnN} to continuous logic theories (Theorem \ref{n-dep and collapsing intro}) and hyperdefinable sets (Theorem \ref{ndep an collapsing hyperim intro}).

Let $G_{n+1,p}$ be the Fraïssé limit of the class of ordered $(n+1)$-partite $(n+1)$-uniform hypergraphs and $G_{n+1}$ be the Fraïssé limit of the class of ordered $(n+1)$-uniform hypergraphs. By a $G_{n+1,p}$-indiscernible sequence $(a_g)_{g_\in G_{n+1,p}}$ we mean that for any $W,W'\subseteq G_{n+1,p}$ if the quantifier free types of $W$ and $W'$ coincide (in the language $\mL_{opg}$ defined in Section \ref{section:n-dep}), then the types of the tuples $(a_g)_{g\in W}$ and $(a_g)_{g\in W'}$ also coincide (similarly for $G_{n+1}$-indiscernibility, see Definition \ref{defin: gen. indisc.}). We say that the sequence $(a_g)_{g_\in G_{n+1,p}}$ is $\mL_{op}$-indiscernible if for any $W,W'\subseteq G_{n+1,p}$ with the same quantifier free type in the language $\mL_{op}$ (see Section \ref{section:n-dep}), the types of the tuples $(a_g)_{g\in W}$ and $(a_g)_{g\in W'}$ also coincide. 
\begin{teor} \label{n-dep and collapsing intro}
    Let $T$ be a complete continuous logic theory. The following are equivalent:
    \begin{enumerate}
        \item $T$ is $n$-dependent.
        \item Every $G_{n+1,p}$-indiscernible is $\mL_{op}$-indiscernible.
        \item Every $G_{n+1}$-indiscernible is order-indiscernible.
    \end{enumerate}
\end{teor}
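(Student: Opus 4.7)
The plan is to prove the circle $(1) \Rightarrow (3) \Rightarrow (2) \Rightarrow (1)$, following the overall architecture of \cite[Theorem 5.4]{Chernikov2014OnN} but adapted to continuous logic via the continuous modeling property. Both Fraïssé classes involved (ordered $(n+1)$-uniform hypergraphs and ordered $(n+1)$-partite $(n+1)$-uniform hypergraphs) have the embedding Ramsey property by Nešetřil–Rödl-type theorems, so by Theorem \ref{CMP iff Ramsey} the Fraïssé limits $G_{n+1}$ and $G_{n+1,p}$ both enjoy the CMP. This will be the key extraction tool.

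For $(1) \Rightarrow (3)$ I would argue contrapositively. A $G_{n+1}$-indiscernible $(a_g)_{g\in G_{n+1}}$ that is not order-indiscernible yields finite $W, W' \subseteq G_{n+1}$ of the same linear-order type whose tuples differ in type, witnessed by a continuous formula $\psi$ and a gap $\epsilon>0$. Since $W$ and $W'$ agree on the order but differ on the hyperedge relation, I would read off from $\psi$ and $(a_g)$ a continuous formula $\varphi(\bar x_0,\dots,\bar x_n)$ and parameter configurations realizing every finite ordered $(n+1)$-uniform hypergraph pattern up to the $\epsilon$-gap, contradicting $n$-dependence in the sense of the continuous $VC_n$-dimension from \cite{Chernikov2020HypergraphRA}.

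For $(2) \Rightarrow (1)$, again contrapositively, I would start from a witness of the failure of $n$-dependence: a continuous formula $\varphi$ and an array of parameters indexed by a large product of linearly ordered sets realizing all hypergraph patterns with a positive $\varphi$-gap $\epsilon$. I would then apply the CMP of $G_{n+1,p}$ (justified by the embedding Ramsey property of the ordered $(n+1)$-partite class via Theorem \ref{CMP iff Ramsey}) to extract a $G_{n+1,p}$-indiscernible sequence inheriting the $\varphi$-gap between tuples agreeing on $\mL_{op}$ but differing on the hyperedge relation, thus violating (2). The continuous-logic subtlety here is that one works with $\epsilon$-approximations to types rather than exact realizations; standard compactness and a diagonal argument over countably many $(\varphi,\epsilon)$ pairs will handle this uniformly.

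For $(3) \Rightarrow (2)$, the delicate step, the approach is to transport a counterexample to $(2)$ into a counterexample to $(3)$. The naive attempt — restricting a $G_{n+1}$-indiscernible obtained by embedding $G_{n+1,p}$ into $G_{n+1}$ via placing the parts in consecutive intervals — fails in general, as Counterexample \ref{counterexample} makes precise. My plan is to first stretch the given $G_{n+1,p}$-indiscernible by passing to a sufficiently rich index, then re-extract a $G_{n+1}$-indiscernible using the CMP of $G_{n+1}$, keeping track of the preserved $\varphi$-gap between tuples of equal order type but distinct hyperedge relation. Verifying that the resulting $(c_g)_{g \in G_{n+1}}$ truly fails order-indiscernibility — rather than collapsing due to the extra symmetries present in $G_{n+1}$ relative to $G_{n+1,p}$ — is the main obstacle and the precise point where the correction to \cite[Theorem 5.4]{Chernikov2014OnN} must intervene; this is where I would follow the short correction communicated by the authors of that paper.
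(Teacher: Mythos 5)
Your decomposition is a genuine departure from the paper's, and the departure happens at precisely the dangerous spot. The paper does \emph{not} prove $(3)\Rightarrow(2)$ directly: it proves the two spokes $(1)\Leftrightarrow(2)$ and $(1)\Leftrightarrow(3)$, routing every implication through $(1)$. The direction $(1)\Rightarrow(2)$ (and its twin $(1)\Rightarrow(3)$) is done via Facts \ref{ Sequence of adjacent graphs} and \ref{Isomorphic copy of random partite hypergraph}; the directions $(2)\Rightarrow(1)$ and $(3)\Rightarrow(1)$ come from Propositions \ref{IP_n iff encoding partite} and \ref{IP_n iff coding nonpartite} respectively, the latter being exactly where the new ingredient lives: the symmetrized formula $\psi_f = \min_{\sigma\in Sym(n)} f_\sigma$ is what lets $IP_n$ produce a $G_{n+1}$-indiscernible witnessing $\neg(3)$. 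Your cycle $(1)\Rightarrow(3)\Rightarrow(2)\Rightarrow(1)$ reinserts the direct implication $(3)\Rightarrow(2)$, which is the very step where \cite[Theorem 5.4]{Chernikov2014OnN} goes wrong, and which the paper under review was written specifically to avoid.

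The concrete gap in your $(3)\Rightarrow(2)$ is the following. Starting from a $G_{n+1,p}$-indiscernible $(a_g)$ that fails $\mL_{op}$-indiscernibility, the derived $G^*_{n+1}$-indexed sequence $(b_h)$ (bundling the parts diagonally) is \emph{not} $G_{n+1}$-indiscernible --- that is exactly what Counterexample \ref{counterexample} shows. You then propose to run CMP on $(b_h)$ to re-extract a genuine $G_{n+1}$-indiscernible $(c_g)$, ``keeping track of the preserved $\varphi$-gap.'' But local basedness only guarantees that types appearing in $(c_g)$ are approximated by types appearing in $(b_h)$ \emph{at tuples with the same $\mL_{og}$-quantifier-free type}; it does not guarantee that the gap between the particular pair $(W, W')$ survives. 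To transfer the gap into the EM-type, you would need the original $(b_h)$ to exhibit the gap \emph{uniformly} across all $R$-tuples and all order-equivalent non-$R$-tuples, and uniformity is precisely what fails absent $G_{n+1}$-indiscernibility. The short correction mentioned at the end of Section \ref{section:n-dep} does not rescue this plan as written: it avoids re-extraction altogether by restricting to the special family of formulas $\varphi^*$ for which $(b_h)$ \emph{is} already indiscernible. Your strategy and the correction are not the same fix, and without the correction (which you have not reproduced) step $(3)\Rightarrow(2)$ is unproved. The paper's route via Proposition \ref{IP_n iff coding nonpartite} --- coloring edges of $G_{n+1,p}$ by the minimizing permutation $\sigma$ and using ERP to extract a monochromatic copy, so that a single $f_\sigma$ encodes $G_{n+1}$ --- is the mechanism you are missing.
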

  
%   \begin{teorsin} \label{n-dep and collapsing}
%    Let $T$ be a continuous logic theory. The following are equivalent:
%    \begin{enumerate}
%        \item $T$ is $n$-dependent.
%        \item Every $G_{n+1,p}$-indiscernible is $\mL_{op}$-indiscernible.
%        \item Every $G_{n+1}$-indiscernible is order indiscernible.
%    \end{enumerate}
%\end{teorsin}

We introduce the following definition of $n$-dependent hyperdefinable sets:
\begin{defin} The hyperdefinable set $X/E$ has the $n$-independence property, $IP_n$ for short, if for some $m<\omega$ there exist two distinct complete types $p,q\in S_{X/E\times \C^m}(\emptyset)$ and a sequence $(a_{0,i},\dots, a_{n-1,i})_{i< \omega}$ such that for every finite $w\subset \omega^n$ there exists $b_w\in X/E$ such that $$\tp(b_w, a_{0,i_0},\dots, a_{n-1,i_{n-1}}  )=p \iff (i_0,\dots, i_{n-1})\in w$$ $$\tp(b_w, a_{0,i_0},\dots, a_{n-1,i_{n-1}}  )=q \iff (i_0,\dots, i_{n-1})\notin w. $$ 
\end{defin}
Using the tools developed in Sections \ref{section: modelling property} and \ref{section:n-dep}, we prove the theorem below, %main result of Section \ref{section: hyperdef n-dep}
 which is a counterpart for hyperdefinable sets of Theorem \ref{n-dep and collapsing intro}. Here, $\Psi^{n+1}_{\mathcal{F}_{X/E}}$  is a special family of functions (see Notation \ref{notation PSI} in Section \ref{section: hyperdef n-dep}).
\begin{teor}\label{ndep an collapsing hyperim intro}
 The following are equivalent:
    \begin{enumerate}
        \item $X/E$ is $n$-dependent.
        \item Every $G_{n+1,p}$-indiscernible $(a_g)_{g\in G_{n+1,p}}$, where for every $g\in P_0(G_{n+1,p})$ we have $a_g\in X/E$, is $\mL_{op}$-indiscernible.
        \item For every $m\in \mathbb{N}$, every $G_{n+1}$-indiscernible with respect to $\Psi^{n+1}_{\mathcal{F}_{X/E}}$ sequence of elements of $\C^m\times X$  is order-indiscernible with respect to $ \Psi^{n+1}_{\mathcal{F}_{X/E}}$.
    \end{enumerate}
\end{teor}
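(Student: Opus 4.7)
The plan is to mirror the structure of the proof of Theorem \ref{n-dep and collapsing intro}, translating everywhere from first-order continuous formulas on the home sort to the pair-of-types description of $IP_n$ that defines $n$-dependence for a hyperdefinable set, and using the family $\Psi^{n+1}_{\mathcal{F}_{X/E}}$ as the bookkeeping device that packages ``formulas involving $X/E$'' as functions to which the modeling property of Section \ref{section: modelling property} can be applied. The three implications I will focus on are $(1)\Rightarrow (2)$, $(2)\Rightarrow (1)$, and then $(2)\Leftrightarrow (3)$, with the last being essentially an index-structure reformulation.

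For $(1)\Rightarrow (2)$ I would argue by contrapositive. Given a $G_{n+1,p}$-indiscernible $(a_g)_{g\in G_{n+1,p}}$ with $a_g\in X/E$ on the $0$-part, suppose it fails $\mL_{op}$-indiscernibility. Then there are two tuples of the same $\mL_{op}$-order type whose $\Psi^{n+1}_{\mathcal{F}_{X/E}}$-types differ; after extracting this to $(n+1)$-tuples one for each part, indiscernibility pushes the discrepancy onto the hyperedge relation alone. This produces two distinct complete types $p\neq q$ over finitely many parameters (extracted from the $a_g$'s on the higher-arity parts) which are realized on $X/E$ according to exactly which $(n+1)$-subsets of the index form a hyperedge in $G_{n+1,p}$. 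Since $G_{n+1,p}$ codes arbitrary finite ordered $(n+1)$-partite hypergraphs, this yields the $IP_n$ witness required by the definition of hyperdefinable $IP_n$, contradicting $(1)$.

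For $(2)\Rightarrow (1)$ I would start from an $IP_n$ witness: types $p\neq q\in S_{X/E\times\C^m}(\emptyset)$ and a sequence $(a_{0,i},\dots,a_{n-1,i})_{i<\omega}$ together with realizers $b_w$ for each finite $w\subseteq \omega^n$. Concatenating the parameters into an $(n+1)$-partite structure indexed by $(\omega,\dots,\omega,[\omega]^n)$ (the top part carrying the $b_w$'s, each interpreted as lying in $X/E$) and placing the hyperedge relation so that it tracks the $p$ versus $q$ dichotomy, the $\Psi^{n+1}_{\mathcal{F}_{X/E}}$-modeling property from Section \ref{section: modelling property} gives a $G_{n+1,p}$-indiscernible $(a'_g)_{g\in G_{n+1,p}}$ based on this array. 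The dichotomy between $p$ and $q$ persists after modeling: it is still controlled by the hyperedge relation and not by the $\mL_{op}$-type, so $(a'_g)$ is $G_{n+1,p}$-indiscernible but not $\mL_{op}$-indiscernible, negating $(2)$.

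For $(2)\Leftrightarrow (3)$ I would use the standard transfer between partite and non-partite Fraïssé limits: an $\mL_{op}$-indiscernible sequence over $G_{n+1,p}$ can be re-indexed as an $\Psi^{n+1}_{\mathcal{F}_{X/E}}$-order-indiscernible sequence over $G_{n+1}$ by collapsing parts, and conversely any $G_{n+1}$-indiscernible can be stratified into parts to give a $G_{n+1,p}$-indiscernible. Because clause $(3)$ quantifies over all $m$ and takes values in $\C^m\times X$, and $\Psi^{n+1}_{\mathcal{F}_{X/E}}$ is set up precisely to encode the $X/E$-valued types, this correspondence is type-preserving. The main obstacle I anticipate is the bookkeeping in $(2)\Rightarrow (1)$: one must check that when applying the modeling property to sequences whose ``top part'' lies in $X/E$, the two types $p,q$ witnessing $IP_n$ survive as distinct complete types rather than being collapsed by the type-definable equivalence $E$, and that the family $\Psi^{n+1}_{\mathcal{F}_{X/E}}$ is rich enough to separate them. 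This is exactly the issue that fails for the naive class of all continuous functions and is the reason $\Psi^{n+1}_{\mathcal{F}_{X/E}}$ is defined as it is in Notation \ref{notation PSI}.
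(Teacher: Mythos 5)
Your plan for $(1)\Leftrightarrow(2)$ is essentially the paper's: contrapositive for $(1)\Rightarrow(2)$ by extracting a hyperedge-controlled dichotomy, and for $(2)\Rightarrow(1)$ constructing an array from an $IP_n$ witness and applying the modeling property. But two of your steps contain genuine gaps, and one of them is the very pitfall this paper is built around.

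First, the step in $(1)\Rightarrow(2)$ where you say ``indiscernibility pushes the discrepancy onto the hyperedge relation alone'' is not automatic. You start from two $\mL_{op}$-isomorphic subsets $W, W'$ with different $\mathcal{F}_{X/E}$-types; to reduce to the case where $W$ and $W'$ differ precisely in one hyperedge (with the rest of the configuration fixed) you need the combinatorial Facts \ref{ Sequence of adjacent graphs} and \ref{Isomorphic copy of random partite hypergraph}. These are what let you interpolate a chain $W = W_0, W_1, \dots, W_k \cong_{\mL_{opg}} W'$ of adjacent copies and then locate an isomorphic copy of $G_{n+1,p}$ inside $G_{n+1,p}$ whose hyperedges control the distinction. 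Without invoking them you don't get a formula with $IP_n$; you only get two tuples with different types and no mechanism to encode arbitrary $w \subseteq \omega^n$.

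Second and more seriously, your treatment of $(2)\Leftrightarrow(3)$ by ``standard transfer between partite and non-partite Fra\"iss\'e limits'' via collapsing or stratifying parts is exactly the re-indexing argument that the paper shows to be incorrect: Counterexample \ref{counterexample} exhibits a $G_{n+1,p}$-indiscernible $(a_g)$ whose collapsed $G^*_{n+1}$-indexed array $(b_h)$ is \emph{not} $G^*_{n+1}$-indiscernible in the unrestricted sense, because a formula can pick coordinates cross-wise from distinct blocks and thereby see ``mixed'' hyperedges that the collapsed index structure cannot distinguish. You correctly sense that the restriction to $\Psi^{n+1}_{\mathcal{F}_{X/E}}$ is what saves the day, but you do not verify that the collapsed array is $G_{n+1}$-indiscernible \emph{with respect to} $\Psi^{n+1}_{\mathcal{F}_{X/E}}$, nor that the stratification direction preserves indiscernibility, and that verification is nontrivial. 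The paper avoids this entirely: it never proves $(2)\Leftrightarrow(3)$ directly, but instead routes both through $(1)$ via Propositions \ref{Hyperim: IP_n iff encoding partite} and \ref{Hyperim: IP_n iff coding nonpartite}, which give equivalences between $IP_n$, encoding $G_{n+1,p}$ partitely, and encoding $G_{n+1}$ non-partitely by a $\psi_f\in\Psi^{n+1}_{\mathcal{F}_{X/E}}$. The $(1)\Rightarrow(3)$ direction in the paper is a separate contrapositive argument using adjacency, self-complementarity of $G_{n+1}$, and symmetry of $\psi_f$; $(3)\Rightarrow(1)$ then follows from the $G_{n+1}$-indiscernible witness supplied by Proposition \ref{Hyperim: IP_n iff coding nonpartite}$(4)$.

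A smaller issue: in $(2)\Rightarrow(1)$ the index $(\omega,\dots,\omega,[\omega]^n)$ is not the correct shape of the array. The $IP_n$ witness naturally yields an $(n+1)$-partite structure whose $P_0$-part is indexed by the $b_w$'s chosen via compactness so that each $g\in P_0$ is assigned $b_{w_g}$ for $w_g = \{\bar g : R(g,\bar g)\}$, with all $n+1$ parts order-indexed by $\omega$. You must also justify that applying the modeling property to an $X$-valued sequence keeps the $P_0$-coordinates in $X$ and that the types $p, q$ survive, which in the paper is handled by Fact \ref{diff type diff f value} and the transition to a separating function $f\in\mathcal{F}_{X/E}$ (Lemma \ref{IP_n hyperdef. set iif formulas}).
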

Here $P_0(G_{n+1,p})$ is the first part of the partition of $G_{n+1,p}$, and by a $G_{n+1}$-indiscernible sequence $(a_g)_{g_\in G_{n+1}}$ with respect to $\Psi^{n+1}_{\mathcal{F}_{X/E}}$  we mean that for any $W,W'\subseteq G_{n+1,p}$ if the quantifier free types of $W$ and $W'$ coincide (in the language $\mL_{og}$ defined in Section \ref{section:n-dep}), then the $\Psi^{n+1}_{\mathcal{F}_{X/E}}$-types of the tuples $(a_g)_{g\in W}$ and $(a_g)_{g\in W'}$ also coincide.

Item $(3)$ of Theorem \ref{ndep an collapsing hyperim intro} cannot be improved by replacing indiscernibility with respect to $\Psi^{n+1}_{\mathcal{F}_{X/E}}$ by indiscernibility with respect to more general families of functions from $\mathcal{F}_{(\C^m \times X/E)^{n+1}}$ or by replacing $\mathcal{F}_{(\C^m \times X/E)^{n+1}}$ by $\Psi^{n'+1}_{\mathcal{F}_{X/E}}$ with $n'>n$ as showed by Example \ref{example optimal}.

\section{Preliminaries}\label{section:prelim}
\subsection{Continuous logic} We present some basic definitions and facts about continuous logic needed for this paper, we refer the reader to \cite{MR2657678} 
and \cite{MR2723787} for a more detailed exposition.

\begin{defin}
    A \emph{continuous (metric) signature} consists of: 
    \begin{itemize}
        \item A collection of function symbols f, together with their arity $n_f<\omega$.
        \item A collection of predicate symbols P, together with their arity $n_P<\omega$.
        \item A binary predicate symbol, denoted $d$, specified as the distinguished \emph{distance symbol}.
        \item For each $n$-ary symbol $s$ and $i<n$ a continuity modulus $\delta_{s,i}$, called the \emph{uniform continuity modulus of } $s$ with respect to the $i$th argument.
    \end{itemize}
\end{defin}

Given a continuous signature $\mL$, the collection of $\mL$-terms and atomic $\mL$-formulas are constructed as usual. In the continuous context, the quantifiers $\sup_x$ and $\inf_x$ play the roles of $\forall$ and $\exists$ respectively. The issue of connectives is a bit more delicate and we refer the reader to \cite{MR2657678} for an in depth treatment. Depending on the context, we will consider all uniformly continuous functions $u:[0,1]^n\to [0,1]$ for all $n<\omega$ as connectives or just some finite subset of such functions.

A \emph{condition} is an expression of the form $\varphi=0$ where $\varphi$ is a formula. Note that expressions of the form $\varphi\geq r$ and $\varphi\leq r$ can be expressed as conditions. A \emph{continuous $\mL$-theory} $T$ is a consistent set of $\mL$-conditions $\varphi=0$ where $\varphi$ is a sentence.

A \emph{complete $n$-type} in variables $x$ is a maximal satisfiable set of conditions with free variables $x_1,\dots,x_n\subseteq x$ for some $n$. The space of all types over the empty set in variables $x$ is denoted by $S_{x}$. If $x=(x_1,\dots,x_n)$, we denote $S_{x}$ by $S_{n}$. The space $S_{x}$ is a compact Hausdorff space when equipped with the finest topology for which all continuous formulas $\varphi$ are continuous functions $\varphi:S_{x}\to[0,1]$.

%Let $M$ be a model, and $\varphi(x,y)$ a CL-formula over $M$. Let $a\in \C^{|x|}$. 
%Then $\tp_{\varphi}(a/M)$ is the function taking $b\in M^{|y|}$ (or $\varphi(x,b)$) to $\varphi(a,b)$, and is called a {\em complete $\varphi(x,y)$-type over $M$}. The space of all complete $\varphi$-types over $M$ is denoted by $S_\varphi(M)$ (it is naturally a quotient of $S(M)$, and the topology on $S_\varphi(M)$ is the quotient topology).

\begin{defin}
    A \emph{definable predicate} $f$ in variables $x$ is a continuous function $f:S_{x}\to[0,1]$.
\end{defin}

The following was proven in \cite[Proposition 3.4]{MR2657678} for a finite number of variables but the proof also applies for an infinite $x$.

\begin{fact}
    Definable predicates in variables $x$ can be uniformly approximated by continuous formulae in variables contained in $x$.
\end{fact}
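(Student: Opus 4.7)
The plan is to invoke a Stone--Weierstrass type density result on the compact Hausdorff space $S_{x}$, exactly as in the finite-variable case of \cite[Proposition 3.4]{MR2657678}. First, let $\mathcal{F}$ denote the collection of functions $\widehat{\varphi}\colon S_{x}\to[0,1]$ arising as interpretations of continuous $\mL$-formulas $\varphi$ whose free variables lie in some finite subset of $x$. Each such $\widehat{\varphi}$ is continuous by the very definition of the topology on $S_{x}$. Allowing all uniformly continuous $u\colon[0,1]^{n}\to[0,1]$ as connectives, $\mathcal{F}$ contains the constants and is closed under composition with any uniformly continuous connective, in particular under $\max$, $\min$, truncated sums and differences, and scalar multiplications; so $\mathcal{F}$ is a sublattice of $C(S_{x},[0,1])$ containing the constants.

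Next I verify that $\mathcal{F}$ separates points of $S_{x}$. If $p\neq q$ are distinct complete types in variables $x$, then by maximality there is some condition of the form $\varphi=0$ belonging to exactly one of $p,q$, and hence $\widehat{\varphi}(p)\neq\widehat{\varphi}(q)$. Since $\varphi$ uses only finitely many variables from $x$, we have $\widehat{\varphi}\in\mathcal{F}$. This is the step at which one must take a little care in the infinite-variable case: although each individual formula has finite support, the family of all such formulas still separates types in infinitely many variables, because a complete type in $x$ is by definition determined by its restrictions to finite subsets of $x$ (equivalently, $S_{x}$ is the inverse limit of the finite-support type spaces).

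With these two ingredients in place, the Kakutani--Stone lattice form of Stone--Weierstrass applied to $C(S_{x},[0,1])$ yields that $\mathcal{F}$ is uniformly dense. Since definable predicates are by definition exactly the continuous functions $S_{x}\to[0,1]$, every definable predicate is a uniform limit of continuous formulas whose free variables are contained in finite subsets of $x$, which gives the conclusion. The only genuinely new point compared with the finite-variable version is the separation-of-points step; I expect that to be the main obstacle, and it is handled by the observation above about how $S_{x}$ is determined by its finite-support projections.
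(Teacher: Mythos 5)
The paper offers no proof of its own for this fact; it simply cites \cite[Proposition 3.4]{MR2657678} and remarks that the same argument goes through for infinitely many variables. Your Stone--Weierstrass argument is precisely the argument in that reference, and you correctly isolate the only new wrinkle in the infinite-variable case: point-separation still holds because every formula has finitely many free variables and $S_x$ is the inverse limit of its finite-support restrictions. The proof is correct and takes the intended route; the only cosmetic remark is that the Kakutani lattice form of Stone--Weierstrass formally requires two-point interpolation rather than mere separation of points, but that follows at once from separation here because $\mathcal{F}$ is closed under composition with arbitrary uniformly continuous connectives $u\colon[0,1]\to[0,1]$.
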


By an abuse of notation, when results apply to both, we will usually also refer to definable predicates as formulas. We need to allow the domain of definable predicate to be an infinite Cartesian power of $\C$ to deal with hyperdefinable sets $X/E$ which are contained in an infinite product of sorts. %, but still the results from \cite{MR2657678} and \cite{MR2723787}  which we will be using are valid for such generalized continuous logic formulas.

%We will also call continuous formula to the objects called {\em definable predicates} in \cite{MR2657678}. Moreover, we may allow the domain of a continuous formula to be an infinite Cartesian power of $\C$ (which is necessary to deal with hyperdefinable sets $X/E$ which are contained in an infinite product of sorts), but still the results from \cite{MR2657678} and \cite{MR2723787}  which we will be using are valid for such generalized continuous logic formulas.

%but also the formalism from %\cite[Subsection 3.1]{hrushovski2021amenability} and \cite[Section 3]{hrushovski2021order}. 
%In particular, by a {\em CL-formula over $A$} we mean a continuous function $\varphi \colon S_n(A)\to \mathbb{R}$.  If $\varphi$ is such a CL-formula, then for any $\bar b\in M^n$ (where $M \models T$) by $\varphi(\bar b)$ we mean $\varphi(\tp(\bar b/A))$; note that the range of every CL-formula is compact. So a CL-formula can be thought of as a function from $\C^n$ to $\mathbb{R}$ which factors through $S_n(A)$ via a continuous map $S_n(A) \to \mathbb{R}$.
 \subsection{The embedding Ramsey property}
Let $\mL'$ be a first-order language. Given $\mL'$-structures $A\subseteq B$, we write $\binom{B}{A}$ for the set of all embeddings from $A$ into $B$.   
\begin{defin}
    Let $A\subseteq B \subseteq C$ be $\mL'$-structures and let $r\in\mathbb{N}$. We write $$ C\to (B)^A_r$$ if for each coloring $\chi: \binom{C}{A}\to r$ there exists some $f\in \binom{C}{B}$ such that $\chi\upharpoonright_{f\circ \binom{B}{A}}$ is constant.
\end{defin}

\begin{defin}
    Let $\mL'$ be a first-order language and $\mathcal{C}$ be a class of finite $\mL'$-structures. We say that $\mathcal{C}$ has the \emph{embedding Ramsey property}, \emph{ERP} for short, if for every $A\subseteq B \in \mathcal{C}$ and $r<\omega$ there is $C\in \mathcal{C}$ such that $C\to (B)^A_r$.
\end{defin}
%\begin{defin}
%    Let $\mL'$ be a first-order language and $\mathcal{C}$ be a class of finite $\mL'$-structures. We say that $\mathcal{C}$ is a \emph{Ramsey class} if it satisfies the following properties:
%    \begin{enumerate}
%        \item \emph{Hereditary Property} (HP): If $A\in \mathcal{C}$ and $B\subseteq A$ then $B\in \mathcal{C}$.
%        \item \emph{Joint Embedding Property} (JEP): For every $A,B\in \mathcal{C}$ there exists $C\in \mathcal{C}$ in which we can embed $A$ and $B$. 
%        \item \emph{Embedding Ramsey Property} (ERP): For every $A\subseteq B \in \mathcal{C}$ and $r<\omega$ there is $C\in \mathcal{C}$ such that $C\to (B)^A_r$.
%    \end{enumerate}
%\end{defin}

Note that the following holds:
\begin{fact}
    If a class of finite $\mL'$-structures has $ERP$, then all the structures of $\mathcal{C}$ are rigid (i.e. they have no nontrivial automorphisms).
\end{fact}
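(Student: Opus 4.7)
The plan is to argue by contraposition: suppose some $A \in \mathcal{C}$ admits a nontrivial automorphism $\sigma$; I will construct, for every $C \in \mathcal{C}$, a $2$-coloring of $\binom{C}{A}$ such that no $f \in \binom{C}{A}$ yields a monochromatic $f \circ \binom{A}{A}$, which refutes $C \to (A)^A_2$ and hence ERP.

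First, pick $a_0 \in A$ with $\sigma(a_0) \neq a_0$, and let $k \geq 2$ be the cardinality of the orbit of $a_0$ under $\langle \sigma \rangle$. Take $B := A$ in the definition of ERP, fix an arbitrary $C \in \mathcal{C}$, and choose any total order $<$ on the underlying set of $C$ (merely as a set, forgetting the $\mL'$-structure on $C$). Define $\chi\colon \binom{C}{A} \to \{0,1\}$ by $\chi(h) = 0$ iff $h(a_0) < h(\sigma(a_0))$, and $\chi(h)=1$ otherwise.

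The main step is to verify that for every $f \in \binom{C}{A}$ the restriction $\chi\upharpoonright_{f \circ \binom{A}{A}}$ is nonconstant. Observe that $\{\sigma^i : i < k\} \subseteq \binom{A}{A}$ and that $\chi(f \circ \sigma^i) = 0$ iff $f(\sigma^i(a_0)) < f(\sigma^{i+1}(a_0))$. Since $f$ is an embedding and the points $\sigma^0(a_0), \ldots, \sigma^{k-1}(a_0)$ are pairwise distinct, so are their $f$-images, and these images form a cycle as $i$ runs through $\mathbb{Z}/k\mathbb{Z}$. No finite cyclic sequence of pairwise distinct, linearly ordered elements can be globally monotone, so the boolean value $[f(\sigma^i(a_0)) < f(\sigma^{i+1}(a_0))]$ must take both values $0$ and $1$ as $i$ varies, producing two elements of $f \circ \binom{A}{A}$ with different $\chi$-colors. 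This contradicts $C \to (A)^A_2$ and concludes the argument.

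The only delicate point is the choice of coloring: a naive $\chi$ trying to distinguish $f \circ \mathrm{id}$ from $f \circ \sigma$ directly does not work, because once one moves along the orbit $a_0, \sigma(a_0), \sigma^2(a_0), \dots$ there is no reason for the induced order comparisons to alternate. The resolution is to compare neighbors around the entire orbit and to invoke the elementary fact that a cyclic sequence of distinct elements of a linearly ordered set cannot be monotone.
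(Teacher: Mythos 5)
Your proof is correct. The paper states this as a fact without giving a proof, so there is nothing in the source to compare against, but your argument is a clean version of the standard folklore one: by contraposition, a nontrivial automorphism $\sigma$ of some $A\in\mathcal{C}$ lets you take $B:=A$, fix an arbitrary linear order on (the underlying set of) any candidate $C$, and color $h\in\binom{C}{A}$ by the relative order of $h(a_0)$ and $h(\sigma(a_0))$; since the $f$-images of the $\sigma$-orbit of $a_0$ are distinct and cyclically arranged, the comparisons $f(\sigma^{i}(a_0))<f(\sigma^{i+1}(a_0))$ cannot all agree, so no $f\in\binom{C}{A}$ yields a monochromatic $f\circ\binom{A}{A}$.

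Your remark at the end is well taken: comparing only $f\circ\mathrm{id}$ with $f\circ\sigma$ does not by itself produce a color clash, and tracking the full orbit cycle is exactly the fix. As a side note, a slightly more direct $2$-coloring avoids the cyclic-monotonicity lemma: color $h$ by whether $h(a_0)$ is the minimum of $\{h(\sigma^{i}(a_0)):i<k\}$. For any $f\in\binom{C}{A}$ exactly one index $j<k$ has $f(\sigma^{j}(a_0))$ minimal among the (distinct) orbit images, so $\chi(f\circ\sigma^{j})=0$ while $\chi(f\circ\sigma^{j'})=1$ for every other $j'$, giving the non-monochromaticity immediately. Both routes are elementary and equally valid; yours is fine as written.
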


\begin{remark}
    Sometimes, the symbol $\binom{B}{A}$ is used to denote the set of all isomorphic copies of $A$ in $B$. If the class $\mathcal{C}$ consists of finite $\mL'$-structures which are rigid, then coloring embeddings from $A$ into $B$ is equivalent to coloring substructures $A\subseteq B$. 
\end{remark}
\section{Generalized indiscernibles}\label{section: modelling property}
Let $\mL'$ be a first-order language and $\mL$ be a continuous logic language. Unless specified otherwise, $T$ is a complete continuous $\mL$-theory with $\C\models T$ a monster model (i.e. $\kappa$-saturated and strongly $\kappa$-homogeneous for a strong limit cardinal $> |T|$) and $\I,\mathcal{J}$ are $\mL'$-structures. %All the tuples considered are of length strictly less than $\kappa$, similarly, all the sets considered are small (i.e. of size strictly less than $\kappa$).

In this section, we present natural adaptations of the concepts of generalized indiscernibles and the modeling property to continuous logic and give a characterization of the continuous modeling property in the form of a continuous logic counterpart of \cite[Theorem 2.10]{SCOW2021102891}.

The following idea first appeared in \cite[Definition VIII.2.4]{Shelah1982-SHECTA-5}.

\begin{defin}\label{defin: gen. indisc.}%[Generalized indisc]
    %Let $\mathcal{L}'$ be a first order language and $\mathcal{I}$ an $\mathcal{L}'$-structure. 
    Let $\II=(a_i: i\in \I)$ be an $\I$-indexed sequence, and let $A\subset \C$ be a small set of parameters.  We say that $\II$ is an $\I$-\emph{indexed indiscernible sequence over} $A$ if for all $n\in\omega$ and all sequences $i_1,\dots,i_n,j_1,\dots,j_n$ from $\I$ we have that 
    $$\qftp(i_1,\dots,i_n)=\qftp(j_1,\dots,j_n) \implies \tp({a}_{i_1}, \dots, {a}_{i_n}/A)=\tp({a}_{j_1}, \dots, {a}_{j_n}/A).$$
\end{defin}

We will refer to $\I$-indexed indiscernible sequences as $\I$-indiscernibles.

Next, we adapt the definition of \emph{locally based on} given in \cite{10.1215/00294527-3132797}. The first reference to this concept can be found in \cite{MZiegler}.

\begin{defin}[Locally based on] 
%Let $\I$ and $\mathcal{J}$ be $\mL'$-structures and 
Let $\II=(a_i:i\in \I)$ be an $\I$-indexed sequence. We say that a $\mathcal{J}$-indexed sequence $(b_j:j\in\mathcal{J})$ is \emph{locally based on $\II$} if for any finite set of $\mL$ formulas $\Delta$, any finite tuple $\overline{j}\subseteq \mathcal{J}$ and $\varepsilon>0$ there is $\overline{i}\subseteq \I$ such that:
\begin{enumerate}
    \item $\qftp(\overline{i})=\qftp(\overline{j})$.
    \item $\lvert \varphi(b_{\overline{j}})- \varphi(a_{\overline{i}}) \rvert\leq \varepsilon$ for all $\varphi\in \Delta$.
\end{enumerate}
\end{defin}

 The original definition presented in \cite[Definition 2.5]{10.1215/00294527-3132797} is the following:
 \begin{defin}[Classical definition of Locally based on]
%Let $\I$ and $\mathcal{J}$ be $\mL'$-structures and 
Let $\II=(a_i:i\in \I)$ be an $\I$-indexed sequence. We say that a $\mathcal{J}$-indexed sequence $(b_j:j\in\mathcal{J})$ is \emph{locally based on $\II$} if for any finite set of $\mL$ formulas $\Delta$, any finite tuple $\overline{j}\subseteq \mathcal{J}$ and $\varepsilon>0$ there is $\overline{i}\subseteq \I$ such that:
\begin{enumerate}
    \item $\qftp(\overline{i})=\qftp(\overline{j})$.
    \item $\tp^{\Delta}(b_{\overline{j}})= \tp^{\Delta}(a_{\overline{i}})$.
\end{enumerate}
\end{defin}
 
 Note that if we tried to use this stronger version of the property, it is easy to show that even for $\I=(\mathbb{N},<)$ we can find a sequence for which there are no indiscernible sequences locally based on it. Consider for example the theory $\Th([0,1],d)$ where $d$ is the distance predicate and the sequence $(1/n)_{n<\omega}$.

The next definition is then the natural continuous counterpart of \cite[Definition 2.17]{SCOW20121624}.

\begin{defin}[Continuous Modeling property]\label{definition: CMP}
    %Fix an $\mL'$ structure $\I$. 
    Given a continuous theory $T$, we say that \emph{$\I$-indexed indiscernibles have the continuous modeling property in $T$} if given any $\I$-indexed sequence $\II=(a_i:i\in \I)$ in a monster model $\C$ of $T$ there exists an $\I$-indiscernible sequence $(b_i:i\in\I)$ in $\C$ locally based on $\II$. We say that $\I$ has the \emph{continuous modeling property} if $\I$-indexed indiscernibles have the continuous modeling property in every continuous theory.
\end{defin}

As it is natural, if a first-order structure $\I$ has the continuous modeling property then it has the modeling property. More precisely:

\begin{prop} \label{CMP implies MP}
    Let $T$ be a first-order theory, and let $T'$ be its continuous logic counterpart (i.e., $T$ and $T'$ have the same models). Then $\I$ has the continuous modeling property in $T'$ if and only if $\I$ has the modeling property in $T$.
\end{prop}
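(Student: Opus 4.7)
The proof rests on one structural observation about the continuous counterpart: every continuous $\mL$-formula $\varphi(\bar{x})$ has finite range on models of $T'$. Atomic subformulas (including the distance, which is a discrete metric) are $\{0,1\}$-valued; any continuous connective $u:[0,1]^n\to[0,1]$ applied to inputs with finite image lands in the finite set $u(\{0,1\}^n)$; and $\sup_x$ or $\inf_x$ of a finite-image formula stays inside that finite image because finite subsets of $[0,1]$ are closed. Consequently, for every continuous $\mL$-formula $\varphi$ and every attained value $v$, the level set $\{\bar{a}:\varphi(\bar{a})=v\}$ is definable in $T$ by a classical first-order formula, and conversely every classical first-order formula becomes a $\{0,1\}$-valued continuous formula under the standard translation. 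This is the bridge that lets one pass between the two syntaxes, both for indiscernibility and for local basedness.

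Assuming CMP in $T'$, fix an $\I$-indexed sequence $\II=(a_i)_{i\in\I}$ in $\C$ and apply CMP to produce a continuously $\I$-indiscernible sequence $(b_i)_{i\in\I}$ continuously locally based on $\II$. Classical $\I$-indiscernibility of $(b_i)_{i\in\I}$ is automatic since every classical first-order formula is a $\{0,1\}$-valued continuous $\mL$-formula, so continuous type equality implies classical type equality. For classical local basedness, given a finite set $\Delta$ of classical first-order formulas and a tuple $\bar{j}\subseteq\I$, view $\Delta$ as a finite set of $\{0,1\}$-valued continuous formulas and apply continuous local basedness with $\varepsilon=1/2$: the estimate $|\varphi(b_{\bar{j}})-\varphi(a_{\bar{i}})|\leq 1/2$ between $\{0,1\}$-valued outputs forces equality, which is exactly the classical local-basedness condition.

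Conversely, assuming MP in $T$, fix $\II$ and apply classical MP to obtain a classical $\I$-indiscernible sequence $(b_i)_{i\in\I}$ classically locally based on $\II$. Continuous $\I$-indiscernibility follows from the structural observation: the value of any continuous formula on a tuple is determined by the truth-values of finitely many classical first-order formulas on that tuple, so classical type equality lifts to continuous type equality. For continuous local basedness, given finite $\Delta\subseteq\mL$, $\bar{j}\subseteq\I$ and $\varepsilon>0$, use the structural observation to collect, for each $\varphi\in\Delta$, the finite list of classical first-order formulas whose truth-values determine $\varphi$'s level sets into a single finite set $\Delta'$, and apply classical local basedness with $\Delta'$; the resulting $\bar{i}$ with $\qftp(\bar{i})=\qftp(\bar{j})$ satisfies $\varphi(b_{\bar{j}})=\varphi(a_{\bar{i}})$, hence trivially $\leq\varepsilon$, for every $\varphi\in\Delta$. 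The only real subtlety is the induction behind the finite-range property, which must be tracked carefully through connectives and both quantifiers; once that is in place, both implications reduce to routine transcription of the respective local-basedness conditions.
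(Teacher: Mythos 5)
Your proof is correct, but it takes a genuinely different route from the paper's.

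The paper's argument for the direction $MP \Rightarrow CMP$ invokes the fact that the algebra $\mathcal{A}$ generated by $\{0,1\}$-valued (classical) formulas is \emph{dense} in the algebra of continuous formulas (a Stone--Weierstrass consequence), picks $\varphi\in\mathcal{A}$ with $|f-\varphi|\leq\varepsilon/2$, and then runs a triangle-inequality estimate to obtain the $\varepsilon$-closeness required by continuous local basedness. You instead prove the stronger structural fact that on models of $T'$ every syntactic continuous formula actually has finite range and each level set is cut out by a classical first-order formula, which lets you transcribe the continuous conditions into classical ones \emph{exactly} rather than approximately. Your route is more elementary in that it avoids appealing to density/Stone--Weierstrass, and it yields exact equality $\varphi(b_{\bar j})=\varphi(a_{\bar i})$ in the local-basedness check; the paper's route is slightly more robust in that the approximation step works verbatim if one allows $\Delta$ to contain definable predicates (uniform limits of formulas), which need not have finite range --- your argument would then need one extra approximation step that you do not spell out. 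Two small imprecisions: the image of $u$ applied to formulas with finite images $F_1,\dots,F_n$ lies in $u(F_1\times\cdots\times F_n)$, not in $u(\{0,1\}^n)$ except at the atomic level; and the induction you defer (``tracked carefully through connectives and both quantifiers'') is indeed the crux of your approach, so in a full write-up it should be carried out, including the clause showing first-order definability of the level sets of $\sup_x\varphi$ from those of $\varphi$. Neither of these is a gap in the idea.
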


\begin{proof}
    Clearly, If $\I$ has the continuous modeling property in $T'$ then it has the modeling property in $T$ since classical formulas are a subset of the $\{0,1\}$-valued continuous logic formulas.

    Assume now that $\I$ has the modeling property in $T$. Let $\II=(a_i:i\in \I)$ be any sequence in $\C\models T$. Since $\I$ has the modeling property, there is an $\I$-indiscernible sequence $(b_i:i\in \I)$ locally based on $\II$ (in the classical sense). We show that the sequence $(b_i:i\in \I)$ is locally based on $\II$ in our continuous logic sense. Since first-order formulas generate a dense subalgebra $\mathcal{A}$ of the set of all continuous logic formulas, for each continuous logic formula $f(x)$ and $\varepsilon>0$ there is $\varphi(x)\in \mathcal{A}$ such that $\lvert f(x)-\varphi(x)\rvert \leq \varepsilon/2$. Thus, for any tuples 
    %%%Adrian: we have the following for arbitrary tuples i and j of the same lenght
    $\overline{i},\overline{j}\subseteq \I$ %such that $\qftp(\overline{i})=\qftp(\overline{j})$ 
    and tuples $a_{\overline{j}}, b_{\overline{i} }$ we have $$ \lvert f(a_{\overline{j}}) -f(b_{\overline{i}})\rvert \leq \lvert f(x)-\varphi(x)\rvert + \lvert \varphi(a_{\overline{j}}) -\varphi(b_{\overline{i}}) \rvert + \lvert f(x)-\varphi(x)\rvert \leq \lvert \varphi(a_{\overline{j}}) -\varphi(b_{\overline{i}}) \rvert +\varepsilon.$$
    Finally, note that by the definition of being locally based on (in the classical sense) for any $b_{\overline{i}}$ and finite $\Sigma \subset \mathcal{A}$, there is $\overline{j}$ with the same quantifier free type as $\overline{i}$ such that $ \varphi(b_{\overline{i}})=\varphi(a_{\overline{j}}) $ for every $\varphi\in \Sigma$. 
    %%%Adrian: for $\Delta$ a finite set of continuous formulas we take $\Sigma$ the set of first order formulas used to generate any of the $f \in \Delta$.
    Therefore, the sequence $(b_i:i\in \I)$ is locally based on $\II$ in the continuous sense.
\end{proof}

Next, we define two partial types that will be useful during this section. The first one is a generalization of the classical Ehrenfeucht-Mostowski type (EM-type for short). The second is a type whose realizations are exactly the $\I$-indiscernible sequences. They are based on \cite[Definitions 2.6 and 2.10]{SCOW20121624} respectively.

\begin{defin}%[Generalized EM] 
%$Let $\I$ be an  $\mL'$ structure and  l
Let $\II=(a_i:i\in \I)$ be an $\I$-indexed sequence. The \emph{$EM$-type of $\II$} is the set of all conditions $\varphi(x_{i_1}\dots,x_{i_n})=0$ such that $\varphi(a_{j_1}\dots,a_{j_n})=0$ holds for every $j_1,\dots,j_n\in \I$ with the same quantifier free type as $i_1,\dots,i_n$. That is \begin{align*}
    \emtp(\II)(x_i: i\in \I)=\{& \varphi(x_{i_1},\dots,x_{i_n})=0 :  \varphi \in \mL, i_1,\dots,i_n\in \I  \\ &\text{ and for any } j_1,\dots,j_n\in \I  \text{ such that } \\ &\qftp(j_1,\dots,j_n)=\qftp(i_1,\dots,i_n),   \models\varphi(a_{j_1}\dots,a_{j_n})=0 \}.
\end{align*}
\end{defin}

\begin{defin}%[Ind(I,L)]
    %Let $\I$ be an $\mL'$-structure and $\mL$ a continuous language.
    We define $\Ind(\I,\mL)$ as the following partial type:
    \begin{align*}
       \Ind(\I,\mL)(x_i:i\in \I):= \{& \varphi(x_{i_1},\dots,x_{i_n})= \varphi(x_{j_1},\dots,x_{j_n}):&\\
        & n<\omega, \overline{i}, \overline{j}\subseteq \I, \qftp(\overline{i})=\qftp(\overline{j}), \varphi(x_{i_1},\dots,x_{i_n})\in \mL\}.
    \end{align*}
\end{defin}

Finally, we define what it means for a partial type to be finitely satisfiable in a sequence. 

\begin{defin}[Finitely satisfiable] Let $\Gamma(x_i:i\in \I)$ be an $\mL$-type, and let $\II=(a_i:i\in \I)$ be an $\I$-indexed sequence. We say that $\Gamma$ is \emph{finitely satisfiable in $\II$} if for every finite $\Gamma_0\subseteq \Gamma^+$ and for every finite $A\subseteq \I$, there is $B\subseteq \I$, a bijection $f:A\to B$, and an enumeration $\overline{i}$ of $A$ such that:
$$ \qftp(\overline{i})=\qftp(f[\overline{i}])\text{ and } (a_{f(i)}:i\in A)\models \Gamma_0\upharpoonright \{x_i: i\in A\}. $$ 
Where $\Gamma^+:=\{ \varphi \leq 1/n: n<\omega ; \varphi=0\in \Gamma\}$.
\end{defin}

The following result gives a sufficient condition for the existence of an $\I$-indiscernible sequence locally based on $\II=(a_i:i\in \I)$.

\begin{lema}%[EMTP and based on] 
\label{EMTP, based on and Ind(I,L)}
Let $\mathcal{J} \supseteq \I$ be $\mL'$-structures with the same age and let $\II=(a_i:i\in \I)$ be an $\I$-indexed sequence. 
\begin{enumerate}
    \item  A $\mathcal{J}$-indexed sequence $\mathbf{J}=(b_j:j\in \mathcal{J})$ is locally based on $\II$ if and only if $\emtp(\mathbf{J})\supseteq \emtp(\II)$.
    \item If $\Ind(\I,\mL)$ is finitely satisfiable in $\II$, then there is an $\I$-indexed indiscernible sequence $\Tilde{\II}:=(b_i:i\in \mathcal{I})$ locally based on $\II$.
\end{enumerate}
\end{lema}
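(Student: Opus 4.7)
The plan is to prove both parts in sequence, with part (2) leveraging part (1).

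For part (1), the forward direction should be direct. Given a condition $\varphi(x_{i_1},\ldots,x_{i_n})=0$ in $\emtp(\II)$ and a tuple $\overline{j}$ from $\mathcal{J}$ with the same quantifier-free type as $(i_1,\ldots,i_n)$, I would apply the locally-based-on property with singleton $\Delta=\{\varphi\}$ and arbitrary $\varepsilon>0$ to produce $\overline{i}'\subseteq \I$ of matching $\qftp$ with $\lvert \varphi(b_{\overline{j}})-\varphi(a_{\overline{i}'})\rvert\leq \varepsilon$. Since $\varphi(a_{\overline{i}'})=0$ by the EM-type condition, letting $\varepsilon\to 0$ forces $\varphi(b_{\overline{j}})=0$, so $\varphi=0$ belongs to $\emtp(\mathbf{J})$.

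The backward direction of part (1) is the main obstacle. Given finite $\Delta$, a tuple $\overline{j}\subseteq \mathcal{J}$, and $\varepsilon>0$, I would argue by contradiction: suppose no $\overline{i}\subseteq \I$ of the right $\qftp$ witnesses the locally-based-on inequalities. Using $\age(\I)=\age(\mathcal{J})$, I fix $\overline{i}^*\subseteq \I$ realizing $\qftp(\overline{j})$, set the constants $r_\varphi := \varphi(b_{\overline{j}})$ for $\varphi\in\Delta$, and consider
\[
\psi(x_{i^*_1},\ldots,x_{i^*_n}) \;:=\; \min_{\varphi\in\Delta}\,\max\!\bigl(0,\;\varepsilon-\lvert\varphi(x_{i^*_1},\ldots,x_{i^*_n})-r_\varphi\rvert\bigr),
\]
which is built from permitted continuous connectives. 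By the contradiction hypothesis, every $\overline{i}'\subseteq \I$ sharing the $\qftp$ of $\overline{i}^*$ satisfies $\psi(a_{\overline{i}'})=0$, so the condition $\psi=0$ lies in $\emtp(\II)\subseteq \emtp(\mathbf{J})$. Evaluating at $\overline{j}$ forces $\psi(b_{\overline{j}})=0$, but a direct computation yields $\psi(b_{\overline{j}})=\varepsilon>0$, the desired contradiction.

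For part (2), the plan is a standard continuous compactness argument that invokes part (1) at the end. I would consider the partial type $p(x_i:i\in\I):=\emtp(\II)\cup \Ind(\I,\mL)$ and show that every finite subset of $p^+$ is realized in $\C$. Given finite $\Gamma_0^{EM}\subseteq \emtp(\II)^+$ and $\Gamma_0^{Ind}\subseteq \Ind(\I,\mL)^+$ whose variables lie in a finite $A\subseteq\I$, the finite satisfiability hypothesis furnishes a $\qftp$-preserving bijection $f:A\to B\subseteq \I$ such that $(a_{f(i)})_{i\in A}$ realizes $\Gamma_0^{Ind}\upharpoonright A$; the same tuple automatically realizes $\Gamma_0^{EM}\upharpoonright A$, because each EM-condition $\varphi=0$ must hold on all $\qftp$-copies inside $\I$ and $f$ preserves $\qftp$. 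Compactness then yields a realization $\Tilde{\II}=(b_i)_{i\in\I}$, which is $\I$-indiscernible by its realization of $\Ind(\I,\mL)$; combined with realizing $\emtp(\II)$, this forces $\emtp(\II)\subseteq \emtp(\Tilde{\II})$, and part (1) applied with $\mathcal{J}=\I$ concludes that $\Tilde{\II}$ is locally based on $\II$.
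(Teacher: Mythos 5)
Your proof is correct and takes essentially the same route as the paper: both directions of (1) use the same contrapositive/contradiction reasoning (your auxiliary formula $\min_{\varphi}\max(0,\varepsilon-|\varphi-r_\varphi|)$ is an equivalent rewriting of the paper's $\max_\varphi|\varphi(x)-\varphi(b_{\overline{j}})|$), and your compactness argument for (2) is precisely what the paper's one-line claim that $\Ind(\I,\mL)\cup\emtp(\II)$ is satisfiable unpacks to.
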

\begin{proof}
    \begin{enumerate}
        \item Suppose $\mathbf{J}$ is locally based on $\II$. Fix $\varphi(x_{i_1},\dots,x_{i_n})=0\in \emtp_{\mL'}(\II)$ and let $\overline{i}=(i_1,\dots,i_n)$. If $ \varphi(x_{\overline{i}})=0$ is not in $\emtp(\mathbf{J})$, then $\varphi(b_{\overline{j}})\geq \varepsilon$ for some $\varepsilon>0$ and $\overline{j}\subseteq \mathcal{J}$ with the same quantifier free type as $\overline{i}$. By assumption, there is $\overline{i}'\subseteq \I$ satisfying the same quantifier free type as $\overline{j}$ and such that $\varphi(a_{\overline{i}'})\geq \varepsilon/2$, which contradicts $\varphi(x_{\overline{i}})\in \emtp(\II)$.

        Suppose now that $\emtp(\mathbf{J})\supseteq \emtp(\II)$. For a contradiction, assume that $\mathbf{J}=(b_j:j\in \mathcal{J})$ is not locally based on $\II$. That is, there is $\Delta \subseteq \mL$, $b_{\overline{j}}:=(b_{j_1},\dots,b_{j_n})$ from $\mathbf{J}$ and $\varepsilon>0$ such that there is no $\overline{i}\subseteq \I$ satisfying $\qftp(\overline{i})=\qftp(\overline{j})$ and $\lvert \varphi(b_{\overline{j}})- \varphi(a_{\overline{i}}) \rvert\leq \varepsilon$ for all $\varphi\in \Delta$. Let $\psi(x):=\max \{ \lvert  \varphi(x) - \varphi(b_{\overline{j}})\rvert : \varphi\in \Delta \}$, $\psi$ is a continuous logic formula and $\psi(b_{\overline{j}})=0$. By assumption, for any $\overline{i}\subseteq \I$ with the same quantifier free type as $\overline{j}$, $\psi(a_{\overline{i}})\geq \varepsilon$. Thus, $\psi(x)\geq \varepsilon \in \emtp(\II)$, which contradicts $\emtp(\mathbf{J})\supseteq \emtp(\II)$.
    
    \item Observe that if the type  $\Ind(\I,\mL)(x_i: i\in \I)$ %$\Gamma(x_i: i\in \I)$  
    is finitely satisfiable in $\II$, then %$\Gamma\cup \emtp(\II)$ 
    $\Ind(\I,\mL) \cup \emtp(\II)$ is satisfiable. Let $\mathbf{J}\models \Ind(\I,\mL)\cup \emtp(\II)$. $\mathbf{J}$ is an $\I$-indiscernible sequence and is locally based on $\II$ by $(1)$.
    \end{enumerate}    
\end{proof}

We now prove the main result of this section. It is an extension of \cite[Theorem 2.10]{SCOW2021102891} to continuous logic. 

\begin{teor}\label{CMP iff Ramsey} Let $\mL'$ be a first-order language %containing a distinguished binary relation symbol $\leq$ 
and let $\I$ be an infinite locally finite $\mL'$-structure%linearly ordered by $\leq$
. Then, the following are equivalent:
\begin{enumerate}
    \item $\age(\I)$ has ERP.
    \item $\I$-indiscernibles have the continuous modeling property.
\end{enumerate}
\end{teor}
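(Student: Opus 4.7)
The plan is to adapt Scow's classical argument (\cite[Theorem 2.10]{SCOW2021102891}) to continuous logic, using Lemma \ref{EMTP, based on and Ind(I,L)} for the forward direction and a coloring-encoding construction for the converse. The key new ingredient compared to the classical case is the need to discretize continuous formula values into finitely many buckets before applying ERP, with the discretization chosen finely enough that monochromaticity translates into the approximate equalities demanded by $\Ind(\I,\mL)^+$.

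For $(1) \Rightarrow (2)$, by Lemma \ref{EMTP, based on and Ind(I,L)}(2) it suffices, given a sequence $\II = (a_i : i \in \I)$, to show that $\Ind(\I, \mL)$ is finitely satisfiable in $\II$. Fix a finite $\Gamma_0 \subseteq \Ind(\I, \mL)^+$ involving formulas $\varphi_1, \ldots, \varphi_M$ with precision $1/N$, and a finite $A \subseteq \I$, which by local finiteness may be enlarged to a finite substructure. Enumerate the finitely many pairs $(l, \tau)$ where $\tau$ is the quantifier-free type of some $n_l$-tuple in $A$, and let $D_\tau$ denote the substructure generated by such a tuple (rigid by the Fact in Section 2.2, since ERP implies rigidity). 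Iteratively apply ERP to build a chain $A = C_0, C_1, \ldots, C_K$ in $\age(\I)$ with $C_k \to (C_{k-1})^{D_{\tau_k}}_N$ at each step. Embed $C_K$ into $\I$ and, working from $k = K$ down to $k = 1$, color each $D_{\tau_k}$-embedding into the current copy of $C_k$ by the $1/N$-discretized value of $\varphi_{l_k}$ evaluated along $\II$; the Ramsey arrow extracts a monochromatic sub-copy of $C_{k-1}$. After $K$ steps one obtains a QFtp-preserving embedding $f : A \to \I$ inside which, for each $l$ and each QFtp class $\tau$ appearing in $\Gamma_0$, the values $\varphi_l(a_{f(\overline{i})})$ (as $\overline{i}$ ranges over $\tau$-tuples of $A$) are pairwise within $1/N$, so $B := f(A)$ realises $\Gamma_0 \upharpoonright A$.

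For $(2) \Rightarrow (1)$, argue contrapositively: suppose $A \subseteq B \in \age(\I)$ and $r < \omega$ witness the failure of ERP, so every finite substructure of $\I$ containing $A$ admits a bad coloring. A Tychonoff compactness argument in $r^{\binom{\I}{A}}$ (combining bad colorings of finite substructures via local finiteness) yields a global coloring $\chi : \binom{\I}{A} \to r$ such that no embedding $h : B \to \I$ is $\chi$-monochromatic on its $A$-sub-embeddings. Build a continuous structure $\C$ on the domain $\{a_i : i \in \I\}$ equipped with the discrete $\{0,1\}$-metric and a predicate $P$ of arity $n = |A|$ satisfying $P(a_{f(a_1)}, \ldots, a_{f(a_n)}) := \chi(f)/(r-1)$ for every embedding $f : A \to \I$ (and arbitrary elsewhere); let $T := \Th(\C)$ and $\II := (a_i : i \in \I)$. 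Assume some $\I$-indiscernible $(b_i)_{i \in \I}$ is locally based on $\II$. Indiscernibility forces $P(b_{f(\overline{a})}) = v$ for a single $v \in [0,1]$ independent of the embedding $f : A \to \I$. Fix any embedding $h : B \to \I$ and consider $\varphi(\overline{y}) := \max_{g : A \to B} |P(y_{g(\overline{a})}) - v|$; then $\varphi(b_{h(\overline{b})}) = 0$, and applying ``locally based on'' with $\varepsilon < 1/(2(r-1))$ yields a tuple $\overline{i} \subseteq \I$ of the same quantifier-free type as $h(\overline{b})$ --- hence coming from some embedding $h' : B \to \I$ --- with $\varphi(a_{\overline{i}}) \leq \varepsilon$. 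This forces $\chi(h' \circ g)$ to take a constant value in $g$, contradicting the choice of $\chi$.

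The main obstacle throughout is the interface between continuous formula values and finite Ramsey colorings. A single ERP application does not suffice in the forward direction (the classical trick of coloring by the full type lives in an infinite space), so ERP must be iterated once per (formula, sub-tuple quantifier-free type) pair, with the discretization matched to the precision $1/N$ prescribed by $\Gamma_0$. Dually, in the converse the quantitative threshold $\varepsilon < 1/(2(r-1))$ is what converts the approximate agreement provided by ``locally based on'' into the exact monochromaticity that contradicts the failure of ERP.
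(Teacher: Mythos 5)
Your proof is correct and takes essentially the same approach as the paper: for $(1)\Rightarrow(2)$, iterated ERP applied to colorings by $1/N$-discretized formula values verifies finite satisfiability of $\Ind(\I,\mL)$ in $\II$ and Lemma~\ref{EMTP, based on and Ind(I,L)}(2) finishes; for $(2)\Rightarrow(1)$, the coloring is encoded into an expansion of $\I$ to which the continuous modeling property is applied, and ``locally based on'' transports the constant color enforced by indiscernibility back to $\I$. The only cosmetic differences are that you argue $(2)\Rightarrow(1)$ contrapositively with an explicit Tychonoff compactness step to build a global bad coloring, and you pack the $r$-coloring into a single $[0,1]$-valued predicate $P$ (exploiting the continuous framework and a quantitative $\varepsilon<1/(2(r-1))$), whereas the paper uses $r$ binary predicates $R_1,\dots,R_r$ and argues directly, leaving the passage from the point-Ramsey property of $\I$ to ERP of $\age(\I)$ implicit.
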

\begin{proof} 
    $(1)\implies (2)$. Assume $\age(\I)$ has ERP and let $\II= (a_i)_{i\in \I}$ be any $\I$-indexed sequence. Our goal is to prove that there exists $\mathbf{J}=(b_i)_{i\in \I}$ locally based on $\II$. By Lemma \ref{EMTP, based on and Ind(I,L)}, it is enough to show that $\Ind(\I,\mL)$ is finitely satisfiable in $\II$.

    Let $\Gamma_0\subset \Ind(\I,\mL)^+$ be any finite subset. For some $K,M<\omega$  $$\Gamma_0=\{ \lvert \varphi(x_{\overline{i}_p})-\varphi(x_{\overline{j}_p})\rvert <\frac{1}{n_m}: \qftp(\overline{i}_p)=\qftp(\overline{j}_p), \varphi\in \Delta, p<K,m<M  \}.$$ $\Gamma_0$ involves finitely many formulas $\Delta:=\{ \varphi_0,\dots,\varphi_m\}$, finitely many tuples $\overline{i}_p$, $\overline{j}_p$ and finitely many rationals $\frac{1}{n_m}$. Without loss of generality, we may assume that the formulas  $\varphi\in \Delta$ (and their tuples of variables) are of the form $\varphi((x_g)_{g\in A})$ for some $A\in \age(\I)$. Let $B\in \age(\I)$ be the structure generated by all the %sets $A$ 
    coordinates of the tuples tuples $\overline{i}_p$, $\overline{j}_p$ involved in $\Gamma_0$. It is enough to prove the existence of a copy $B'$ of $B$ such that for any $\varphi((x_g)_{g\in A})\in \Delta$ and $A',A''\subseteq B$ copies of $A$,  $$\lvert \varphi((a_g)_{g\in A'}) - \varphi((a_g)_{g\in A''})\rvert \leq \frac{1}{n} $$ for some $n<\omega$ such that $1/n$ is smaller than any rational involved in $\Gamma_0$.

    If $\Delta$ involves only one formula $\varphi((x_g)_{g\in A})$, we proceed in the following manner: Linearly order the set of intervals $\{ [\frac{i}{n},\frac{i+1}{n}]: i<n \}$ and define an $n$-coloring of the copies $A'$ of $A$ by coloring each $A'$ with the first interval that contains $\varphi((a_g)_{g\in A'})$. Since $\age(\I)$ is Ramsey, we can find a copy $B'$ of $B$ homogeneous with respect to the coloring.
    %%%Adrian: I don't know if it is better to say homogeneous or monochromatic. 
    Then, $(a_g)_{g\in B'}$ witnesses that $\Gamma_0$ is satisfied in $\II$. If $\Delta$ involves $k<\omega$ formulas $ \{ \varphi_i((x_g)_{g\in A_i}) : i<k\}$ and all the sets $A_i$ involved are isomorphic we can apply a similar trick, using as colors the hypercubes $$\{ [\frac{i_1}{n}, \frac{i_1+1}{n}]\times\cdots\times [\frac{i_k}{n}, \frac{i_k+1}{n}]: i_1,\dots,i_k<n \}.$$ 

    We claim that the latter is the only case we need to check. The proof is a standard argument in Ramsey theory which we sketch here for completeness
    
    Let $A_1\dots,A_m$ be structures in $\age(\I)$ and let $B\in \age(\I)$ embed every $A_i$ for $i<m$. Let $k_1,\dots,k_n$ be natural numbers and let $Z_n\in \age(\I)$ be such that $$Z_n\to (Z_{n-1})^{A_n}_{k_n}$$  for every $n<m$. We construct by induction a sequence of structures $Y_n\in \age(\I)$ for $0\leq n\leq m$.

    Case $n=0$: $Y_0=Z_m$

    Case $0<n<m$: By induction we have $Y_{n-1}\in \age(\I)$ isomorphic to $Z_{m-n+1}$. Color the copies of $A_{m-n+1}$ inside $Y_{n-1}$ with $k_{n-1}$ colors. By definition of $Z_{m-n+1}$, there is a copy $Y_n$ of $Z_{m-n}$ inside $Y_{n-1}$ such that all of the copies of $A_{m-n+1}$ contained in $Y_n$ have the same color.

Note that since $Y_n\subseteq Y_{n-1}$ for $0\leq n \leq m$ and all copies of $A_{m-n+1}$ inside $Y_n$ are of the same color, we have that $Y_n$ is homogeneous for copies of $A_j$ for all $m-n+1\leq j \leq m$. Therefore, $Y_m$ is homogeneous for all copies of $A_1,\dots,A_m$ and so it is the $B'$ we were looking for in the proof.

    $(2)\implies(1)$. %By Proposition \ref{CMP implies MP}, the continuous modelling property implies the classical modelling property. Hence, this follows from \cite[Theorem 2.10]{SCOW2021102891}.
    Let $A\subseteq B \in \age(\I)$ be arbitrary finite substructures of $\I$ and let  $\chi$ be a $k$-coloring of the embeddings of $A$ into $\I$. We expand $\I$ by adding a predicate $R_i$ for each fiber of the coloring. Let us denote this expanded structure by $\I'$ and this new language by $\mL$. Let $T$ be the $\mL$-theory of of $\I'$. Since $\I$ has the modeling property in $T$, there is an $\I$-indiscernible sequence $(b_i)_{i\in \I}$ locally based on $(i)_{i\in \I}$. Using the definition of locally based on for $\Delta:=\{ R_1,\dots,R_k \}$ we can find and embedding $f$ from $B$ into $\I$ such that \begin{align*}
    \qftp_{\mL'}(B)&=\qftp_{\mL'}(f[B])\\ &\text{ and}\\
    \tp^{\Delta}((b_g)_{g\in B})&=\tp^{\Delta}( (f(g))_{g\in B}).
    \end{align*}
    This implies that $\chi\upharpoonright_{f\circ \binom{B}{A}}$ is constant.
\end{proof}

In light of the previous theorem we will not make a distinction between continuous or classical modeling property from now on.

%Later work by Meir and Papadopoulos showed that, in the classical context, it is not necessary that the language contains a predicate for a linear ordering of $\I$ (see \cite[Theorem 6.11]{meir2023practical}). The proof of this theorem and its preliminary results either hold verbatim in our context or follow from Proposition \ref{CMP implies MP}.

\begin{comment}

    \subsection{nadav and Aris results}

\begin{lema}[Nadav 6.4]
 Let $\I$ be an $\mL'$-structure with the embedding Ramsey property and assume that $\age(\I)$ is countable. Then $\I$ is $\mL'_{\infty}$-orderable.
\end{lema}

\begin{cor}[5.6 Nadav] Let $\I$ be an $\mL'$-structure with the modelling property. Then $\I$ is $\mL'_{\infty}$-orderable
\end{cor}

\begin{cor}[6.5 and 6.6 Nadav] Let $\mL'$ be a first order language and $\I$ an infinite locally finite $\mL'$-structure such that $\age(\I)$ is countable. Then $\I$ has the embedding Ramsey property if and only of it has the modelling property.
\end{cor}

\begin{teor}[6.11 Nadav] Let $\mL'$ be a first order language and $\I$ an infinite locally finite $\mL'$-structure. Then, the following are equivalent:
\begin{enumerate}
    \item $\I$ has the modeling property.
    \item $\I$ has the embedding Ramsey property.
\end{enumerate}
\end{teor}

\end{comment}

\section{Characterizing n-dependence through collapse of indiscernibles}\label{section:n-dep}
Let $T$ be a complete continuous $\mL$-theory with $\C\models T$ a monster model (i.e. $\kappa$-saturated and strongly $\kappa$-homogeneous for a strong limit cardinal $> |T|$). %All the tuples considered are of length strictly less than $\kappa$, similarly, all the sets considered are small (i.e. of size strictly less than $\kappa$).

 In this section, we study $n$-dependent continuous formulae and give an analogous result to \cite[Theorem 5.4]{Chernikov2014OnN}. We present an alternative proof of the aforementioned result, which corrects a mistake made in the original source. 

The next few paragraphs contain basic facts about hypergraphs taken almost verbatim from \cite{Chernikov2014OnN}.

We work with three families of languages 
\begin{align*}
    \mL^n_{op}&=\{<, P_0(x),\dots, P_{n-1}(x)\},\\
    \mL^n_{og}&=\{<, R(x_0,\dots,x_{n-1})\},\\
    \mL^n_{opg}&=\{<, R(x_0,\dots,x_{n-1}), P_0(x),\dots, P_{n-1}(x)\}.
\end{align*}
  When $n<\omega$ is clear we will simply omit it. We consider the Ramsey classes of ordered $n$-uniform hypergraphs and ordered $n$-partite $n$-uniform hypergraphs.

An $\mL^n_{og}$-structure $(M,<,R)$ is an ordered $n$-uniform hypergraph if 
\begin{itemize}
    \item $(M,<)\models DLO$
    \item $R(a_0,\dots,a_{n-1})$ implies that $a_0,\dots,a_{n-1}$ are different,
    \item the relation $R$ is symmetric.
\end{itemize}

An $\mL^n_{opg}$-structure $(M,<,R,P_0,\dots,P_{n-1})$ is an ordered $n$-partite $n$-uniform hypergraph if
\begin{itemize}
    \item $M$ is the disjoint union $P_0\sqcup \dots \sqcup P_{n-1}$ such that if $R(a_0,\dots,a_{n-1})$ then $P_i\cap \{a_0,\dots,a_{n-1}\}$ is a singleton for every $i<n$,
    \item the relation $R$ is symmetric,
    \item $M$ is linearly ordered by $<$ and $P_0<\cdots<P_{n-1}$.
\end{itemize}

The following fact was proven in \cite{NESETRIL1977289}, \cite{NESETRIL1983183} and independently in \cite{1a7d4f5c-0dcd-37ec-b761-69bf07cad14f} for the case of nonpartite hypergraphs and in \cite{Chernikov2014OnN} for the case of partite hypergraphs.

\begin{fact} \label{fact: ordered hypergraphs are ramsey}
Let $K$ be the set of all finite ordered $n$-partite $n$-uniform hypergraphs and $\Tilde{K}$ be the set of all finite ordered $n$-uniform hypergraphs. The classes $K$ and $\Tilde{K}$ have the embedding Ramsey property.
\end{fact}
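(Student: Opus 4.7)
The plan is to follow the classical route of Nešetřil--Rödl: establish the partite version first, and then derive the non-partite version from it. The central device is the \emph{partite construction}, an inductive amalgamation procedure in which one builds a Ramsey witness by gluing together many copies of the target structure along a common "base" part, so that each new gluing step upgrades the Ramsey property one copy of $A$ at a time.

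First I would reduce the partite case to a \emph{partite lemma}: for $A \subseteq B$ in $K$ and any number of colors $r$, there is $C \in K$ such that every $r$-coloring of $\binom{C}{A}$ has a monochromatic $B$-copy, constructed so that the fiber of $C$ over the $A$-part is an iterated amalgam of fibers of $B$ over the same $A$-part. Proceed by induction on the number of copies of $A$ inside $B$: at each stage I glue a fresh fiber of $B$ along a previously unaddressed copy of $A$. To force a monochromatic $B$ to appear, the inductive step invokes a pigeonhole on the new "free" part, and the base case is typically handled by Hales--Jewett (or, more directly, by the ordinary Ramsey theorem) applied to the index set of the fibers. Throughout, I would maintain a compatible linear order $P_0 < \cdots < P_{n-1}$ on the parts, extending it coherently across amalgamation steps by placing new fibers in a sequential block; this works because in an $n$-partite $n$-uniform structure each hyperedge hits each part exactly once, so rigid, order-respecting identifications along $A$ cause no conflict.

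To pass from $K$ to $\tilde K$, for $A \subseteq B$ in $\tilde K$ with $|B| = m$ I would embed the pair into the partite class as follows: choose an $m$-partite refinement $B'$ of $B$ by letting each of the $m$ singletons (ordered) form one part, and take all induced substructures $A'$ of $B'$ whose underlying $\tilde K$-structure is isomorphic to $A$. Applying the partite Ramsey property to the pair $A' \subseteq B'$ (with colors chosen to account for all the ways $A$ can sit inside $B'$) and then forgetting the partition produces a witness $C \in \tilde K$. The ordered structure on $C$ is inherited directly from the ordered partite witness.

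The main obstacle is executing the partite construction cleanly in Step 1: the bookkeeping of how successive fibers are glued along copies of $A$, and proving that at the terminal stage every copy of $B$ inside $C$ has all its $\binom{B}{A}$-many copies of $A$ receiving the same color, requires a careful inductive statement (typically "all copies of $A$ using the last fiber are monochromatic while earlier ones are already handled"). The order-preservation in the amalgamation and the reduction in Step 3 from $\tilde K$ to $K$ are comparatively routine once the partite machinery is in hand.
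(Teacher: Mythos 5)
The paper does not prove this statement: it is stated as a Fact and attributed to Nešetřil--Rödl and Abramson--Harrington for $\tilde K$, and to Chernikov--Palacín--Takeuchi for $K$. So there is no in-paper argument to compare against; what can be assessed is whether your sketch, following the cited Nešetřil--Rödl machinery, is internally coherent. The overall choice of method is right, and the ingredients you list (Hales--Jewett at the base, inductive amalgamation of ``pictures'' along a common lower part, maintenance of a compatible ordering, and a final forget-the-partition step) are exactly the ones the classical proof uses.

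However, there is a genuine gap in how you pass from $K$ to $\tilde K$. Your $m$-partite refinement $B'$ (one singleton part per vertex of $B$) is an $m$-partite $n$-uniform hypergraph with transversal edges. When $m>n$ this object does \emph{not} belong to $K$, which consists by definition of structures with exactly $n$ parts $P_0<\cdots<P_{n-1}$, each edge meeting each part exactly once. So the ERP for $K$ that you establish in Step~1 cannot be ``applied to the pair $A'\subseteq B'$'': you are invoking a Ramsey statement in a class you have not yet proved Ramsey. The content of the Nešetřil--Rödl partite construction is precisely the passage from the $n$-partite transversal case (the Partite Lemma, via Hales--Jewett) up through the $a$-partite $n$-uniform transversal classes for growing $a$; this is where the iterated amalgamation actually lives, and it is the hard part, not a comparatively routine coda. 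Your outline places the amalgamation inside $K$ and treats the many-partite iteration as a bookkeeping exercise, which inverts where the work is. Two clean ways to fix the architecture: either (a) run the partite construction in the $a$-partite $n$-uniform transversal class (for arbitrary $a$), deriving $\tilde K$ by forgetting the partition and $K$ as the special case $a=n$; or (b) take the non-partite result for $\tilde K$ as known and deduce ERP for $K$ from it directly, which is essentially what the cited source for the partite case does. A secondary point worth flagging: even within $K$, a general $A\subseteq B$ may have several vertices in a single part $P_i$, so the Hales--Jewett Partite Lemma (which handles transversal $A$) does not give ERP for $K$ on its own; some amalgamation or color-boosting is still needed there.
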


We will denote by $G_{n,p}$ the Fraïssé limit of $K$ and by $G_n$ the Fraïssé limit of $\Tilde{K}$.

\begin{remark}
    The theories of $G_n$ and $G_{n,p}$ can be axiomatized in the following way:
    \begin{enumerate}
        \item[1.] $(M, <, R)\models \Th(G_n)$ if and only if
        \begin{itemize}
            \item $(M,<)\models DLO$,
            \item $(M, <, R)$ is an ordered $n$-uniform hypergraph,
            \item For every finite disjoint sets $A_0,A_1\subset M^{n-1}$ such that $A_0$ consists of tuples with pairwise distinct coordinates and $b_0<b_1\in M$, there is $b\in M$ such that $b_0<b<b_1$ and $R(b,a_{i,1},\dots,a_{i,n-1})$ holds for every $(a_{0,1},\dots,a_{0,n-1})\in A_0$ and $\neg R(b,a_{1,1},\dots,a_{1,n-1})$ holds for every $(a_{1,1},\dots,a_{1,n-1})\in A_1$.
        \end{itemize}
        \item[2.] $(M, <, R, P_0,\dots, P_n)\models \Th(G_{n,p})$ if and only if
        \begin{itemize}
            \item For every $i<n$ $P_i(M)\models DLO$,
            \item $(M, <, R, P_0,\dots, P_n)$ is an ordered $n$-partite $n$-uniform hypergraph,
            \item for every $j<n$, finite disjoint sets $A_0,A_1\subset \prod_{i\neq j} P_i(M)$ and $b_0<b_1\in P_j(M)$ there is $b\in P_j(M)$ such that $b_0<b<b_1$ and $R(b,a_{i,1},\dots,a_{i,n-1})$ holds for every $(a_{0,1},\dots,a_{0,n-1})\in A_0$ and $\neg R(b,a_{1,1},\dots,a_{1,n-1})$ holds for every $(a_{1,1},\dots,a_{1,n-1})\in A_1$.
        \end{itemize}
    \end{enumerate}
\end{remark}

Next, we define the $n$-independence property for continuous formulas. An equivalent definition was first formulated in \cite{Chernikov2020HypergraphRA} using the $VC_n$ dimension.

\begin{defin}[$n$-independent formula]
     We say that a formula $f(x,y_0,\dots,y_{n-1})$ \emph{ has the $n$-independence property}, $IP_n$ for short, if there exist $r<s\in\mathbb{R}$ and a sequence $(a_{0,i},\dots, a_{n-1,i})_{i< \omega}$ such that for every finite $w\subseteq \omega^n$ there exists $b_w$ such that 
     \begin{align*}
         f(b_w, a_{0,i_0},\dots, a_{n-1,i_{n-1}}  )\leq r &\iff (i_0,\dots, i_{n-1})\in w\\
         &and\\
         f(b_w, a_{0,i_0},\dots, a_{n-1,i_{n-1}}  )\geq s &\iff (i_0,\dots, i_{n-1})\notin w.
     \end{align*} We say that the $\mL$-theory $T$ is \emph{$n$-dependent}, or $NIP_n$, if no $\mL$-formula has $IP_n$. 
\end{defin}

The following remark is a collection of basic properties of $n$-dependent formulas.

\begin{remark} \label{naming parameters and dummy variables}
    \begin{enumerate}
        \item Naming parameters preserves $n$-dependence. If the $\mL(A)$-formula $f(x,y_0,\dots,y_{n-1},A)$ has $IP_n$ witnessed by $(a_{0,i},\dots, a_{n-1,i})_{i< \omega}$, then the $\mL$-formula $g(x,z_0,\dots,z_{n-1})$ has $IP_n$ witnessed by $(a_{0,i}A,\dots, a_{n-1,i}A)_{i< \omega}$ where $z_i=y_iw$ and $g(x,z_0,\dots,z_{n-1})=f(x,y_0,\dots,y_{n-1},w)$.
        \item Adding dummy variables preserves $n$-dependence. Namely, let $x\subset w$ and $y_i\subset z_i$ for all $i<n$. If $g(x,z_0,\dots,z_{n-1}):=f(x,y_0,\dots,y_{n-1})$ has $IP_n$, then so does $f(x,y_0,\dots,y_{n-1})$.
        \item Every $n$-dependent theory is $(n+1)$-dependent.
    \end{enumerate}
\end{remark}

Next, we define what it means for a continuous logic formula to encode a partite and a nonpartite hypergraph.

\begin{defin}[Encoding partite hypergraphs]
    We say that a formula $f(x_0,\dots,x_{n-1})$ \emph{encodes an $n$-partite $n$-uniform hypergraph} $(G,R,P_0,\dots,P_{n-1})$ if there is a $G$-indexed sequence $(a_g)_{g\in G}$ and $r<s\in\mathbb{R}$ satisfying 
    \begin{align*}
    f( a_{g_0},\dots, a_{g_{n-1}}  )\leq r &\iff  R(g_0,\dots, g_{n-1})\\
    &and\\
    f(a_{g_0},\dots, a_{g_{n-1}}  )\geq s &\iff \neg R(g_0,\dots, g_{n-1})
    \end{align*}
    for all $g_0,\dots,g_{n-1}\in P_0\times\cdots\times P_{n-1}$.
    We say that a formula $f(x_0,\dots,x_{n-1})$ \emph{ encodes $n$-partite $n$-uniform hypergraphs} if there exist $r<s\in\mathbb{R}$ such that $f(x_0,\dots,x_{n-1})$ encodes every finite $n$-partite $n$-uniform hypergraph using the same $r$ and $s$.
\end{defin}

\begin{defin}[Encoding hypergraphs] \label{def: encoding nonpartite}
    We say that a formula $f(x_0,\dots,x_{n-1})$ \emph{encodes an $n$-uniform hypergraph $(G,R)$} if there is a $G$-indexed sequence $(a_g)_{g\in G}$  and $r<s\in\mathbb{R}$ satisfying 
    \begin{align*}
        f( a_{g_0},\dots, a_{g_{n-1}}  )\leq r &\iff  R(g_0,\dots, g_{n-1})\\
        &and\\
        f(a_{g_0},\dots, a_{g_{n-1}}  )\geq s &\iff \neg R(g_0,\dots, g_{n-1})
    \end{align*}
    for all 
    %pairwise distinct
    $g_0,\dots,g_{n-1}\in G$.
    We say that a formula $f(x_0,\dots,x_{n-1})$ \emph{encodes $n$-uniform hypergraphs} if there exist $r<s\in\mathbb{R}$ such that $f(x_0,\dots,x_{n-1})$ encodes every finite $n$-uniform hypergraph using the same $r$ and $s$.
\end{defin}

Note that if a formula encodes $n$-uniform hypergraphs, then it encodes $n$-partite $n$-uniform hypergraphs. 
%%%Adrian: I don't know if it is worth to add the following comment: Note also that the exact values of $r$ and $s$ in the definitions are not important since given a formula $f(x_0,\dots,x_{n-1})$ encoding ($n$-partite) $n$-uniform hypergraphs using $r$ and $s$ we can produce a formula $f'(x_0,\dots,x_{n-1})$ encoding ($n$-partite) $n$-uniform hypergraphs using any $r'<s'\in \mathbb{R}$.

It is no surprise that the $n$-independence property and encoding $(n+1)$-partite $(n+1)$-uniform hypergraphs are also equivalent in our continuous context.

\begin{prop} \label{IP_n iff encoding partite} Let $f(x,y_0,\dots,y_{n-1})$ be a formula. Then, the following are equivalent:
\begin{enumerate}
    \item $f$ has $IP_n$.
    \item $f$ encodes $(n+1)$-partite $(n+1)$-uniform hypergraphs.
    \item $f$ encodes $G_{n+1,p}$ as a partite hypergraph.
    \item $f$ encodes $G_{n+1,p}$ as a partite hypergraph witnessed by a $G_{n+1,p}$-indiscernible sequence.
\end{enumerate}
\end{prop}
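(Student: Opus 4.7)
The plan is to prove the cycle $(1) \Rightarrow (2) \Rightarrow (3) \Rightarrow (4) \Rightarrow (1)$, keeping the same $r < s$ except for a small shrinkage at the modeling-property step. For $(1) \Rightarrow (2)$ I would give a direct construction: given a finite $(n+1)$-partite $(n+1)$-uniform hypergraph $H = (G, R, P_0, \ldots, P_n)$, enumerate each $P_i$ for $1 \leq i \leq n$ as $(g_{i,j})_{j < |P_i|}$ and set $a_{g_{i,j}} := a_{i-1,j}$ from the $IP_n$-witness; for each $g \in P_0$ let $w_g := \{(j_0,\ldots,j_{n-1}) : R(g, g_{1,j_0}, \ldots, g_{n,j_{n-1}})\}$ and set $a_g := b_{w_g}$. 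The required encoding, with the original $r, s$, falls directly out of the $IP_n$ definition. For $(2) \Rightarrow (3)$ I would invoke compactness in $\C$ applied to the partial type in variables $(x_g)_{g \in G_{n+1,p}}$ expressing the encoding of $G_{n+1,p}$: every finite fragment pins down only a finite substructure of $G_{n+1,p}$ (an element of $\age(G_{n+1,p})$), which is encodable by $(2)$.

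For $(3) \Rightarrow (4)$ I would combine Theorem~\ref{CMP iff Ramsey} with Fact~\ref{fact: ordered hypergraphs are ramsey} to secure the continuous modeling property for $G_{n+1,p}$-indiscernibles. Starting from an encoding sequence $(a_g)$ with margin $r < s$, I would fix $\varepsilon \in (0, (s-r)/2)$ in advance and extract a $G_{n+1,p}$-indiscernible sequence $(b_g)$ locally based on $(a_g)$. The main verification is that $(b_g)$ still encodes $G_{n+1,p}$, now with the mildly tightened margin $r' := r + \varepsilon < s - \varepsilon =: s'$: for each tuple $\bar g \in P_0 \times \cdots \times P_n$, applying the locally-based-on property to $\Delta = \{f\}$ and the tuple $\bar g$ yields $\bar g'$ of the same $\mL_{opg}^{n+1}$-quantifier-free type (hence in the same parts and with the same $R$-truth value) satisfying $|f(b_{\bar g}) - f(a_{\bar g'})| \leq \varepsilon$, transferring $f(a_{\bar g'}) \leq r$ to $f(b_{\bar g}) \leq r'$ and $f(a_{\bar g'}) \geq s$ to $f(b_{\bar g}) \geq s'$.

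For $(4) \Rightarrow (1)$ I would pick strictly $<$-increasing sequences $(g_{i,j})_{j<\omega}$ inside $P_i(G_{n+1,p})$ for $i = 1, \ldots, n$ and set $a_{i-1,j} := a_{g_{i,j}}$. For each finite $w \subseteq \omega^n$ I would apply compactness in $\C$ to realize the partial type
\[ \{f(x, a_{0,i_0}, \ldots, a_{n-1,i_{n-1}}) \leq r : (i_0,\ldots,i_{n-1}) \in w\} \cup \{f(x, a_{0,i_0}, \ldots, a_{n-1,i_{n-1}}) \geq s : (i_0,\ldots,i_{n-1}) \notin w\}. \]
Finite satisfiability reduces, via the encoding of $(a_g)$ and its $G_{n+1,p}$-indiscernibility, to finding $g \in P_0(G_{n+1,p})$ with prescribed $R$-values against a finite collection of tuples from $P_1 \times \cdots \times P_n$, which is furnished by the extension axiom for $\Th(G_{n+1,p})$ recorded in the remark after Fact~\ref{fact: ordered hypergraphs are ramsey}.

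The step requiring the most care is $(3) \Rightarrow (4)$: the inequalities $\leq r$ and $\geq s$ are not preserved exactly by local basing, so preemptively shrinking the margin $[r, s]$ by $\varepsilon$ is essential. One must also confirm that the $\mL_{opg}^{n+1}$-quantifier-free type really pins down both the partition and the $R$-relation, so that the indiscernibility invoked never conflates $R$-positive and $R$-negative tuples.
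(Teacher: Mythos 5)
Your proof follows essentially the same route as the paper's: $(1)\Rightarrow(2)$ by the direct encoding $a_g := b_{w_g}$, $(2)\Rightarrow(3)$ by compactness, $(3)\Rightarrow(4)$ via Theorem~\ref{CMP iff Ramsey} and Fact~\ref{fact: ordered hypergraphs are ramsey}, and $(4)\Rightarrow(1)$ by compactness from the encoding. One small remark on $(3)\Rightarrow(4)$: the margin shrinkage is harmless but in fact unnecessary, since for a fixed tuple $\bar g$ you can apply the locally-based-on property for every $\varepsilon>0$ (with possibly different witnesses $\bar g'$), giving $f(b_{\bar g})\le r+\varepsilon$ for all $\varepsilon$ and hence $f(b_{\bar g})\le r$ exactly, and likewise $f(b_{\bar g})\ge s$; so the original $r<s$ are preserved and the shrinkage is not, contrary to your last paragraph, essential.
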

\begin{proof}
    $(1)\implies(2)$. Let $r<s\in \mathbb{R}$, $( a_{0,i},\dots, a_{n-1,i} )_{i<\omega}$ and $(b_w)_{w\subseteq\omega^n}$ witness that $f(x,y_0,\dots,y_{n-1})$ has $IP_n$. Let a finite $n+1$-uniform $n+1$-partite hypergraph $G$ be given. Without loss of generality, we may assume that $\lvert P_0(G)\rvert=\cdots=\lvert P_n(G)\rvert=k$. For every $g\in P_0(G)$ consider the set $w_g:=\{(g_1,\dots,g_n): G\models R(g,g_1,\dots,g_n)\}$. By identifying $P_m(G)$ with $\{ (m,i): i<k\}$ and the definition of $IP_n$, we can find $b_{w_g}$ satisfying $$ f(b_{w_g}, a_{g_1},\dots, a_{g_n}  )\leq r \iff (g_1,\dots, g_{n})\in w_g$$ and $$f(b_{w_g}, a_{g_1},\dots, a_{g_n}  )\geq s \iff (g_1,\dots, g_{n})\notin w_g.$$ Then, $(a_g)_{g\in G}$ witnesses that $f$ encodes $G$, where $a_g:=b_{w_g}$ for every $g\in P_0(G)$.
    
    $(2)\implies(3)$. Follows from compactness.
    
    $(3)\implies(4)$. Let $\II=(a_g)_{g\in G_{n+1,p}}$ witness that $f(x,y_0,\dots,y_{n-1})$ encodes $G_{n+1,p}$ as a partite hypergraph. By Theorem \ref{CMP iff Ramsey} and Fact \ref{fact: ordered hypergraphs are ramsey}, there exists a $G_{n+1,p}$-indiscernible sequence $(b_g)_{g\in G_{n+1,p}}$ locally based on $\II$. It is easy to see that $(b_g)_{g\in G_{n+1,p}}$ also witnesses that $f(x,y_0,\dots,y_{n-1})$ encodes $G_{n+1,p}$ as a partite hypergraph.
    
    $(4)\implies(1)$. Let $(a_g)_{g\in G_{n+1,p}}$ witness that $f(x,y_0,\dots,y_{n-1})$ encodes $G_{n+1,p}$ as a partite hypergraph. We write $$G_{n+1,p}=\{ (j,m): j\leq n; m<\omega \}.$$ Then, by randomness of $G_{n+1,p}$ and compactness, for any  finite $w\subseteq \omega^n$ we can find $b_{w}$ such that
    \begin{align*}
        f(b_{w}, a_{1,i_1},\dots, a_{n,i_{n}}  )\leq r \iff (i_1,\dots, i_{n})\in w; \\
        f(b_{w}, a_{1,i_1},\dots, a_{n,i_{n}}  )\geq s \iff (i_1,\dots, i_{n})\notin w.
    \end{align*}
\end{proof}

As in \cite[Corollary 5.3]{Chernikov2014OnN}, from the fact that any permutation of the parts of the partition of $G_{n+1,p}$ is induced by an automorphism of $G_{n+1.p}$ treated as a pure hypergraph, we obtain the following as an easy corollary:

\begin{cor}\label{Corolary: preservation under permutation of variables}
    Let $f(x,y_0,\dots,y_{n-1})$ be a formula and $(w,z_0,\dots,z_{n-1} )$ be any permutation of $(x,y_0,\dots,y_{n-1})$. Then $g(w,z_0,\dots,z_{n-1} ):= f(x,y_0,\dots,y_{n-1})$ is $n$-dependent if and only if $f(x,y_0,\dots,y_{n-1})$ is $n$-dependent.
\end{cor}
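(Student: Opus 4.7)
The plan is to invoke Proposition~\ref{IP_n iff encoding partite} to reduce the claim to a statement about encoding hypergraphs: if $(a_g)_{g\in G_{n+1,p}}$ witnesses that $f$ encodes $G_{n+1,p}$ as a partite hypergraph, I want to produce $(b_g)_{g\in G_{n+1,p}}$ witnessing the same for $g(w,z_0,\dots,z_{n-1})$. The idea is to transport $(a_g)_g$ along a suitable automorphism $\tau$ of $G_{n+1,p}$ viewed as a pure $(n+1)$-uniform hypergraph, with ordering and part labels forgotten.

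More precisely, let $\pi$ be the permutation of $\{0,\dots,n\}$ for which $g(u_0,\dots,u_n)=f(u_{\pi^{-1}(0)},\dots,u_{\pi^{-1}(n)})$. The crucial input is an $R$-preserving bijection $\tau:G_{n+1,p}\to G_{n+1,p}$ satisfying $\tau(P_i)=P_{\pi(i)}$ for every $i$. To build it, I would define a new $\mL^{n+1}_{opg}$-structure $H$ on the same universe by setting $P_i^H:=P_{\pi(i)}$ and choosing an ordering such that $P_0^H<\dots<P_n^H$ while agreeing with the original order inside each part, leaving $R$ unchanged. A check against the axiomatization of $\Th(G_{n+1,p})$ given in the remark after Fact~\ref{fact: ordered hypergraphs are ramsey} shows $H\models\Th(G_{n+1,p})$; by Fraïssé uniqueness there is an isomorphism $\rho:H\to G_{n+1,p}$, and $\tau:=\rho^{-1}$ does the job.

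Setting $b_g:=a_{\tau(g)}$ and fixing $g_i\in P_i$ for $0\le i\le n$, by definition
\[ g(b_{g_0},\dots,b_{g_n}) = f(a_{\tau(g_{\pi^{-1}(0)})},\dots,a_{\tau(g_{\pi^{-1}(n)})}). \]
Writing $h_i:=\tau(g_{\pi^{-1}(i)})$, one has $h_i\in P_{\pi(\pi^{-1}(i))}=P_i$, and since $\tau$ preserves $R$ and $R$ is symmetric, $R(h_0,\dots,h_n)\iff R(g_0,\dots,g_n)$. With the constants $r<s$ witnessing $IP_n$ for $f$, this immediately gives the analogous inequalities for $g$, so $(b_g)_g$ witnesses that $g$ encodes $G_{n+1,p}$; Proposition~\ref{IP_n iff encoding partite} then yields $IP_n$ for $g$. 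The reverse implication is symmetric, applying the argument to $\pi^{-1}$. The only non-mechanical step is the construction of $\tau$, a routine Fraïssé bookkeeping check that relabeling parts and adjusting the order preserves the axioms; everything after that is a mechanical unwinding of the definitions combined with the symmetry of $R$.
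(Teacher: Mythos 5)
Your proof is correct and follows the same route the paper takes: the paper derives the corollary as a one-line remark from the fact (borrowed from Chernikov--Palac\'in--Takeuchi) that any permutation of the parts of $G_{n+1,p}$ is induced by an automorphism of $G_{n+1,p}$ viewed as a pure hypergraph, together with Proposition~\ref{IP_n iff encoding partite}. Your argument simply fleshes out the construction of that automorphism (via relabelling the parts, reordering, and invoking uniqueness of the countable model) and the mechanical transport of the witness sequence; the only terminological nit is that the isomorphism $\rho$ is more cleanly obtained from $\omega$-categoricity of $\Th(G_{n+1,p})$ than from ``Fra\"iss\'e uniqueness'' applied directly to $H$, unless you also note that $H$ is homogeneous.
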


A more involved proof is given in \cite[Proposition 10.6]{Chernikov2020HypergraphRA}.

We cannot guarantee that an $n$-independent formula will encode $(n+1)$-uniform hypergraphs. However, it is true that for continuous theories having $IP_n$ and the existence of a formula encoding $(n+1)$-uniform hypergraphs are equivalent. This generalizes the result in \cite[Lemma 2.2]{LASKOWSKI2003263} and allows is to give an alternative proof of \cite[Theorem 5.4]{Chernikov2014OnN} that avoids the mistake mentioned in the introduction. In the proof of the next result, we will write $f(y_0,\dots,y_{n-1},x)$ instead of $f(x,y_0,\dots,y_{n-1})$ for convenience.

\begin{prop}\label{IP_n iff coding nonpartite} Let $T$ be a continuous logic theory. The following are equivalent:
\begin{enumerate}
    \item T has $IP_n$.
    \item There is a continuous logic formula encoding $(n+1)$-uniform hypergraphs.
    \item There is a continuous logic formula encoding $G_{n+1}$ as a hypergraph.
    \item There is a continuous logic formula encoding $G_{n+1}$ as a hypergraph witnessed by a $G_{n+1}$-indiscernible sequence.
\end{enumerate}
\end{prop}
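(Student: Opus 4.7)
The plan is to follow the same template as Proposition \ref{IP_n iff encoding partite}, with only $(1) \Rightarrow (2)$ requiring a genuinely new idea. First I would handle the routine chain $(2) \Rightarrow (3) \Rightarrow (4) \Rightarrow (1)$. For $(2) \Rightarrow (3)$: the partial type in variables $(x_g)_{g \in G_{n+1}}$ asserting that the tuple encodes $G_{n+1}$ with the parameters $r < s$ is finitely satisfied by $(2)$, since every finite subhypergraph of $G_{n+1}$ is itself a finite $(n+1)$-uniform hypergraph; saturation of $\C$ realizes the type. For $(3) \Rightarrow (4)$, Theorem \ref{CMP iff Ramsey} combined with Fact \ref{fact: ordered hypergraphs are ramsey} produces, from any witness $(a_g)_{g \in G_{n+1}}$, a $G_{n+1}$-indiscernible sequence $(b_g)$ locally based on it; if $R(\bar g)$ holds, then for every $\varepsilon > 0$ there is $\bar g' \in G_{n+1}$ with $\qftp(\bar g') = \qftp(\bar g)$ and $|f(b_{\bar g}) - f(a_{\bar g'})| \leq \varepsilon$, forcing $f(b_{\bar g}) \leq r$, and symmetrically $f(b_{\bar g}) \geq s$ in the non-edge case. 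For $(4) \Rightarrow (1)$, pick $n+1$ disjoint infinite convex subsets $A_0 < A_1 < \ldots < A_n$ of $G_{n+1}$, use $(a_g)_{g \in A_{j+1}}$ as the $y_j$-sequence for $j < n$, and invoke the axiomatization of $\Th(G_{n+1})$ recalled above to produce, for every finite $w \subseteq \omega^n$, some $b_w \in A_0$ realizing the required edge-pattern; compactness delivers a witness to $IP_n$ for $f$ read as $f(x, y_0, \ldots, y_{n-1})$.

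The heart of the proof is $(1) \Rightarrow (2)$, a continuous analogue of the ``blow-up'' construction of \cite[Lemma 2.2]{LASKOWSKI2003263}. I would start with $f(x, y_0, \ldots, y_{n-1})$ witnessing $IP_n$ and, by Proposition \ref{IP_n iff encoding partite}, fix $r < s$ such that $f$ encodes every finite $(n+1)$-partite $(n+1)$-uniform hypergraph with these parameters. Repackaging the variables, set $z_i := (w_i^0, w_i^1, \ldots, w_i^n)$ with $|w_i^0| = |x|$ and $|w_i^j| = |y_{j-1}|$ for $1 \leq j \leq n$, and define
$$g(z_0, \ldots, z_n) := f(w_0^0,\, w_1^1,\, \ldots,\, w_n^n).$$
The claim is that $g$ encodes $(n+1)$-uniform hypergraphs with the same $r$ and $s$. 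Given a finite $(n+1)$-uniform hypergraph $(V, R)$, consider its \emph{partite blow-up} $H^\ast$ with parts $P_j = V \times \{j\}$ and edge relation $R^\ast((v_0, 0), \ldots, (v_n, n))$ iff $v_0, \ldots, v_n$ are pairwise distinct and $R(v_0, \ldots, v_n)$. Let $(a_{(v, j)})$ witness that $f$ encodes $H^\ast$, and set $c_v := (a_{(v, 0)}, \ldots, a_{(v, n)})$ for each $v \in V$. For any $v_0, \ldots, v_n \in V$ and any permutation $\sigma$ of $\{0, \ldots, n\}$, the tuple $(a_{(v_{\sigma(0)}, 0)}, \ldots, a_{(v_{\sigma(n)}, n)})$ is a legitimate partite tuple in $H^\ast$, so
$$g(c_{v_{\sigma(0)}}, \ldots, c_{v_{\sigma(n)}}) = f(a_{(v_{\sigma(0)}, 0)}, \ldots, a_{(v_{\sigma(n)}, n)})$$
is $\leq r$ iff $R^\ast$ holds on this tuple, equivalently iff $v_0, \ldots, v_n$ are pairwise distinct and $R(v_0, \ldots, v_n)$ -- a permutation-invariant condition, which (using the convention that $R$ fails on non-distinct tuples) is precisely $R(v_0, \ldots, v_n)$.

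The only subtle point is that although $g$ is \emph{asymmetric} as a formula, it produces symmetric values on the specifically designed tuples $c_v$; this is forced by bundling exactly one ``part-copy'' of each vertex into every coordinate of the new tuple, together with the symmetry of $R$. Everything else -- compactness, the modeling property for $G_{n+1}$-indexed sequences, and the richness axioms for $\Th(G_{n+1})$ -- is already provided by Section \ref{section: modelling property} and Fact \ref{fact: ordered hypergraphs are ramsey}, so I anticipate no further obstacle beyond the symmetry bookkeeping.
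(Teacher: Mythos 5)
Your proof is correct, and the key implication $(1)\Rightarrow(2)$ takes a genuinely different route from the paper. The paper encodes an arbitrary finite $(n+1)$-uniform hypergraph $\mathcal{H}$ by first symmetrizing $f$ to $\psi_f := \min_{\sigma\in\operatorname{Sym}(n)} f(y^0_{\sigma(0)},\ldots,y^{n-1}_{\sigma(n-1)},x_{\sigma(n)})$ and then hand-building an isomorphic copy $\tilde{\mathcal{H}}$ inside $G_{n+1,p}$ via an auxiliary function $c:\omega\to\omega$ chosen precisely so that, on the selected witnesses, $f_\sigma$ drops below $r$ only at $\sigma=\operatorname{Id}$. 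You avoid both the $\min$ and the choice of $c$: you pass to the partite blow-up $H^\ast$ of $\mathcal{H}$ (a genuinely new finite $(n+1)$-partite hypergraph, rather than a copy carved out of $G_{n+1,p}$), feed it to the \emph{asymmetric} diagonal-extraction formula $g(z_0,\ldots,z_n)=f(w_0^0,w_1^1,\ldots,w_n^n)$, and observe that the values of $g$ on the diagonal tuples $c_v=(a_{(v,0)},\ldots,a_{(v,n)})$ are automatically permutation-invariant because the condition ``$v_0,\ldots,v_n$ pairwise distinct and $R(v_0,\ldots,v_n)$'' is. This is cleaner and closer in spirit to the classical argument of \cite{LASKOWSKI2003263}, and it correctly handles repeated vertices since $(v,i)\neq(v,j)$ remains a legal partite tuple in $H^\ast$ with $R^\ast$ failing. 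The one thing the paper's extra work buys is that the symmetric formula $\psi_f$ is reused as a fixed object throughout the rest of the paper --- it underlies Notation \ref{notation psi_f}, Lemma \ref{IP_n f and psi}, the family $\Psi^{n+1}_{\mathcal{F}_{X/E}}$ of Notation \ref{notation PSI}, and item $(3)$ of Theorem \ref{ndep an collapsing hyperim intro} --- so your simpler $g$ would not be a drop-in replacement there. Your handling of $(2)\Rightarrow(3)\Rightarrow(4)\Rightarrow(1)$ matches the paper up to inessential repackaging; in $(4)\Rightarrow(1)$ the paper routes through Proposition \ref{IP_n iff encoding partite} by observing $G_{n+1,p}$ embeds into $G_{n+1}$ as a hypergraph, while you produce the $IP_n$ witness directly from the randomness axiom on convex blocks, and both are fine.
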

    \begin{proof}
            $(1)\implies(2)$. Let $f(y_0,\dots,y_{n-1},x)$ be a formula with $IP_n$. We show that the symmetric formula $$ \psi(y^0_0y^1_0\cdots y^{n-1}_0x_0,\dots,y^0_ny^1_n\cdots y^{n-1}_nx_n)=\min_{\sigma\in Sym(n)} \{ f(y^0_{\sigma(0)},\dots ,y^{n-1}_{\sigma({n-1})},x_{\sigma(n)}) \}$$ encodes every finite $(n+1)$-uniform hypergraph. By Proposition \ref{IP_n iff encoding partite}, $f$ encodes $G_{n+1,p}$ as a partite hypergraph, which is witnessed by a $G_{n+1,p}$-indiscernible sequence $(a_g)_{g\in G_{n+1,p}}$ and some $r<s\in \mathbb{R}$. We enumerate the elements of $G_{n+1,p}$ as $$\{ g^i_m: i\leq n ; m<\omega \},$$ where the superscript indicates which part of the partition the element belongs to.

            Let an $(n+1)$-uniform finite hypergraph $\mathcal{H}=(H,R_H)$ be given, we write $H:=\{ h_i: i<k \}$ for some $k<\omega$. We construct $\Tilde{\mathcal{H}}=(\Tilde{H},R_{\Tilde{H}})$ an isomorphic copy of $\mathcal{H}$ consisting of elements $\Tilde{h}_i$ for $i<k$ of the form $( g^0_i,\dots,g^{n-1}_i,g^{n}_{c(i)} )$ and show that the formula $\psi$ encodes $\Tilde{\mathcal{H}}$ (and hence encodes $\mathcal{H}$), where the relation $R_{\Tilde{H}}$ and the function $c:\omega\to\omega$ are to be defined.

            We start by defining the function $c$. For every $i<\omega$, let $c(i)$  be the smallest $m<\omega$ such that for any $(j_0,\dots,j_{n-1})\in [k]^n$ $$ R_{G_{n+1,p}}(g^0_{j_0},\dots, g^{n-1}_{j_{n-1}}, g^n_m) \iff j_0<\cdots < j_{n-1}<i \wedge R_H(h_{j_0},\dots,h_{j_{n-1}},h_i).$$ Note that the existence of such $m$ is guaranteed by randomness of $G_{n+1,p}$.

            The relation $R_{\Tilde{H}}$ is defined in the following manner: for $i_0<\cdots<i_n<k$ we set $$R_{\Tilde{H}}(\Tilde{h}_{i_0},\dots,\Tilde{h}_{i_n}) \iff R_{G_{n+1,p}}( g^0_{i_0}, g^1_{i_1},\dots,g^{n-1}_{i_{n-1}},g^n_{c(i_n)} ),$$ the rest of the cases are defined by symmetry of $R_{\Tilde{H}}$ and by declaring $\neg R_{\Tilde{H}}(\Tilde{h}_{i_0},\dots,\Tilde{h}_{i_n})$ whenever $i_{m_1}=i_{m_2}$ for some $m_1\neq m_2$. Note that by construction, we have $(H,R_{H})\cong(\Tilde{H},R_{\Tilde{H}})$.

            \begin{claim}
                The elements $b_{\Tilde{h}_i}:=(a_{g^0_{i}},\dots, a_{g^{n-1}_i},a_{ g^n_{c(i)} })$ for $i<k$ witness that $\psi$ encodes $\Tilde{\mathcal{H}}$ with the above $r<s$.
            \end{claim}
            \begin{proof}[Proof of claim]
                To ease the notation, for each $\sigma \in Sym(n)$ we write $$f_\sigma:=f(y^0_{\sigma(0)},\dots ,y^{n-1}_{\sigma({n-1})},x_{\sigma(n)}).$$ Our goal is the following: \begin{align*}
                    \psi( b_{\Tilde{h}_{i_0}},\dots b_{\Tilde{h}_{i_n}})\leq r &\iff R_{\Tilde{H}}(\Tilde{h}_{i_0},\dots,\Tilde{h}_{i_n})\\
                    \psi( b_{\Tilde{h}_{i_0}},\dots b_{\Tilde{h}_{i_n}})\geq s &\iff \neg R_{\Tilde{H}}(\Tilde{h}_{i_0},\dots,\Tilde{h}_{i_n})
                \end{align*}

                First, note that if $i_{m_1}=i_{m_2}$ for some $m_1,m_2<n$ then we have $\neg R_{\Tilde{H}}(\Tilde{h}_{i_0},\dots,\Tilde{h}_{i_n})$ by definition of $R_{\Tilde{H}}$ and $\psi( b_{\Tilde{h}_{i_0}},\dots b_{\Tilde{h}_{i_n}})\geq s$ by construction of the function $c$. Hence, we only need to prove the equivalences above in the case where all the $i$'s are pairwise distinct. We prove the first equivalence; the second one is easily deduced from it.
                
                Assume that $R_{\Tilde{H}}(\Tilde{h}_{i_0},\dots,\Tilde{h}_{i_n})$ holds. By symmetry, without loss of generality we may assume that $i_0<\cdots<i_n$. Then, by the definition of $R_{\Tilde{H}}$, this implies that $R_{ G_{n+1,p} }( g^0_{i_0},\dots, g^{n-1}_{i_{n-1}}, g^n_{c(i_n)}  )$ holds. Since the formula $f$ encodes $G_{n+1,p}$, this is equivalent to $f( a_{g^0_{i_0}},\dots, a_{g^{n-1}_{i_{n-1}}},a_{ g^n_{c(i_n)} } )\leq r$ . Hence, $\psi( b_{\Tilde{h}_{i_0}},\dots b_{\Tilde{h}_{i_n}})\leq r$.
                
                Assume $\psi( b_{\Tilde{h}_{i_0}},\dots b_{\Tilde{h}_{i_n}})\leq r$, again by symmetry, we may assume without loss of generality that $i_0<\cdots<i_n$. This implies that for some $\sigma\in Sym( n )$ we have $f_\sigma\leq r$. However, by construction of the function $c$, the only possibility is that $f_{Id}\leq r$, that is, $f( a_{g^0_{i_0}},\dots, a_{g^{n-1}_{i_{n-1}}},a_{ g^n_{c(i_n)} } )\leq r$.
                %%%Adrian: this is because for any other $\sigma$, the set of indices would not be in ascending order, hence does not satisfy the first part of the definition of the function c.
                Since the formula $f$ encodes $G_{n+1,p}$, this is equivalent to $R_{ G_{n+1,p} }( g^0_{i_0},\dots, g^{n-1}_{i_{n-1}}, g^n_{c(i_n)}  )$, which implies $R_{\Tilde{H}}(\Tilde{h}_{i_0},\dots,\Tilde{h}_{i_n})$ by definition of the relation $R_{\Tilde{H}}$.
            \end{proof}

            $(2)\implies(3)$. Follows from compactness.
            
            $(3)\implies(4)$. Let $\II=(a_g)_{g\in G_{n+1}}$ witness that $\psi(x_0,x_1,\dots,x_n)$ encodes $G_{n+1}$ as a hypergraph. By Theorem \ref{CMP iff Ramsey} and Fact \ref{fact: ordered hypergraphs are ramsey}, there exists a $G_{n+1,p}$-indiscernible sequence $(b_g)_{g\in G_{n+1}}$ locally based on $\II$. It is easy to see that $(b_g)_{g\in G_{n+1}}$ also witnesses that $\psi(x_0,x_1,\dots,x_n)$ encodes $G_{n+1}$ as a hypergraph.
            
            $(4)\implies(1)$. If a formula encodes $G_{n+1}$ as a hypergraph, then it also encodes $G_{n+1,p}$ as a partite hypergraph, which implies that the formula has $IP_n$ by Proposition \ref{IP_n iff encoding partite}.
    \end{proof}

To prove the main theorem of this section we need the next two facts about hypergraphs from \cite{Chernikov2014OnN}.

Let $(G_*,\mL_{o*},\mL_{g*})$ be either $(G_n,\{ <\}, \{<, R\})$ or $(G_{n,p},\mL^n_{op},\mL^n_{opg})$. 

Let $V\subset G_*$ be a finite set and $g_0,\dots,g_{n-1}, g'_0,\dots,g'_{n-1}\in G_*\setminus V$ such that $$R(g_0,\dots,g_{n-1})\notiff R(g'_0,\dots,g'_{n-1}).$$
By $W=g_0\dots g_{n-1}V$ we mean the set $\{ g_0,\dots,g_{n-1} \}\cup V$ with the inherited structure from $G_*$. Let $\mL_*$ be $\mL_{o*}$ or $\mL_{g*}$, and let $W=g_0\dots g_{n-1}V$, $W'=g'_0\dots g'_{n-1}V$, we write $W\cong_{\mL_*}W'$ if the map acting as the identity in $V$ and sending $g_i$ to $g'_i$ for $i<n$ is an $\mL_*$ isomorphism. 

\begin{defin} Let $V\subset G_*$ be a finite set and $g_0,\dots,g_{n-1}, g'_0,\dots,g'_{n-1}\in G_*\setminus V$ be as above. 
$W=g_0\dots g_{n-1}V$ is $V$-adjacent to $W'=g'_0\dots g'_{n-1}V$ if \begin{itemize}
    \item $W\cong_{L_{o*}} W'$,
    \item for every nonempty $\overline{v}\in V$ with $\lvert \overline{v}\rvert=k$ and $i_0,\dots,i_{n-k-1}<n$ $$ R(g_{i_0},\dots g_{i_{n-k-1}},\overline{v}) \iff  R(g'_{i_0},\dots g'_{i_{n-k-1}},\overline{v})$$
\end{itemize}
$W$ is said to be adjacent to $W'$ if there is $V\subset W \cap W'$ such that $W$ is $V$-adjacent to $W'$. 
\end{defin}
\begin{fact} \label{ Sequence of adjacent graphs}
    Let $W,W'\subset G_*$ be subsets such that $W\cong_{L_{o*}} W'$. Then there is a sequence $W=W_0,W_1,\dots,W_k$ such that $W_{i+1}$ is adjacent to $W_i$ for every $i<k$ and $W_k\cong_{L_{g*}}W'$
\end{fact}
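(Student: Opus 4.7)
The plan is to induct on the number of edge-disagreements between $W$ and $W'$. Since each of $W$ and $W'$ is rigid as an $\mL_{o*}$-structure (finite linear order, plus a partition in the partite case), there is a unique $\mL_{o*}$-isomorphism $\phi \colon W \to W'$; let $d(W,W')$ denote the number of $n$-element subsets $S \subseteq W$ with $R(S) \notiff R(\phi(S))$ in $G_*$. If $d(W,W')=0$, then $\phi$ is already an $\mL_{g*}$-isomorphism and we finish with $k=0$.

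For $d(W,W')>0$, pick any disagreement $S=\{g_0,\dots,g_{n-1}\}\subseteq W$ and set $V=W\setminus S$. The heart of the argument is to produce $g'_0,\dots,g'_{n-1}\in G_*\setminus V$ satisfying: (i) the map fixing $V$ and sending $g_i\mapsto g'_i$ is an $\mL_{o*}$-isomorphism from $W$ onto $W_1 := V\cup\{g'_0,\dots,g'_{n-1}\}$; (ii) for every nonempty $\overline{v}$ from $V$ of length $k$ and all indices $i_0,\dots,i_{n-k-1}<n$, $R(g_{i_0},\dots,g_{i_{n-k-1}},\overline{v})\iff R(g'_{i_0},\dots,g'_{i_{n-k-1}},\overline{v})$; (iii) $R(g'_0,\dots,g'_{n-1})\notiff R(g_0,\dots,g_{n-1})$. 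Then $W_1$ is $V$-adjacent to $W$ by definition. Composing $\phi$ with the inverse of the map in (i) yields an $\mL_{o*}$-isomorphism $\psi\colon W_1\to W'$, and a direct check shows $d(W_1,W')=d(W,W')-1$: by (ii), $\psi$ agrees with $\phi$ on the edge value of every $n$-subset of $W_1$ that meets $V$; and by (iii), the single remaining $n$-subset $\{g'_0,\dots,g'_{n-1}\}$, which was a disagreement for $\phi$, has been flipped into agreement. Induction on $d$ then completes the chain.

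To produce $g'_0,\dots,g'_{n-1}$, I would add them to $G_*$ one at a time using the extension axioms of $\Th(G_*)$ recalled in the remark following Fact~\ref{fact: ordered hypergraphs are ramsey}. At step $i$, place $g'_i$ in the order-interval immediately above $g_i$ (in part $P_i$ in the partite case) and, for each $(n-1)$-tuple $\overline{a}$ drawn from $V\cup\{g'_0,\dots,g'_{i-1}\}$, specify $R(g'_i,\overline{a})\iff R(g_i,\overline{a}')$, where $\overline{a}'$ is obtained from $\overline{a}$ by replacing every $g'_j$ by $g_j$. The extension axiom applies because the $(n-1)$-tuples chosen have pairwise distinct coordinates (and in the partite case lie in the required cross-product of parts). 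The only deviation is at $i=n-1$: for the single tuple $\overline{a}=(g'_0,\dots,g'_{n-2})$, override the default and demand $R(g'_{n-1},g'_0,\dots,g'_{n-2}) = \neg R(g_{n-1},g_0,\dots,g_{n-2})$, enforcing (iii). The hard part is the bookkeeping at this final step, but it is consistent because every $n$-subset of $W_1$ other than $\{g'_0,\dots,g'_{n-1}\}$ that involves $g'_{n-1}$ must also meet $V$ and hence falls under regime (ii), whose specifications were already set at earlier construction steps; the single override therefore introduces no conflict, and the induction closes.
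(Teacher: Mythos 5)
The paper does not give a proof of this fact; it is stated as a citation to \cite{Chernikov2014OnN}, so there is no in-paper argument to compare against. Your direct argument is correct and is the natural one. Rigidity of finite linearly ordered (and, in the partite case, part-respectingly ordered) structures gives a unique $\mL_{o*}$-isomorphism $\phi\colon W\to W'$; you induct on the number of $n$-subsets $S\subseteq W$ on which $\phi$ fails to preserve $R$; and when $d>0$ you pick such an $S$, set $V=W\setminus S$, and use the extension axioms of $\Th(G_*)$ to manufacture $g'_0,\dots,g'_{n-1}$ so that $W_1=V\cup\{g'_0,\dots,g'_{n-1}\}$ is $V$-adjacent to $W$ with the one hyperedge on $S$ flipped; composing with $\phi$ drops the disagreement count by exactly one. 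Two small points of rigor you should spell out. First, ``place $g'_i$ immediately above $g_i$'' has no literal meaning in a DLO; what you want is $g'_i$ strictly between $g_i$ and the least element of $W\cup\{g'_j : j<i\}$ above it (staying in the same part $P_j$ as $g_i$ in the partite case), which density provides and which makes the map fixing $V$ and sending $g_j\mapsto g'_j$ order- and part-preserving. Second, your remark that the override at step $n-1$ is the unique deviation relies on the cardinality observation that for $i<n-1$ the set $\{g'_0,\dots,g'_{i-1}\}$ has fewer than $n-1$ elements, so every $(n-1)$-tuple drawn from $V\cup\{g'_0,\dots,g'_{i-1}\}$ necessarily meets $V$; stating this explicitly is what guarantees $V$-adjacency with exactly one flipped hyperedge and closes the induction.
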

\begin{fact} \label{Isomorphic copy of random partite hypergraph}
    Let $V\subset G_*$ be a finite set and $g_0<\dots<g_{n-1} \in G_* \setminus V$ with $R(g_0,\dots,g_{n-1})$. Then there are infinite sets $X_0<\dots<X_{n-1}\subseteq G_*$ such that
    \begin{itemize}
        \item $(G';<;R)\cong (G_{n,p};<;R)$ where $G'=X_0\dots X_{n-1}$ (i.e. each $X_i$ correspond to the part $P_i$ of the partition),
        \item for any $g'_i\in X_i$ ($i<n$), either $W\cong_{\mL_{opg}} W'$ or $W$ is $V$-adjacent to $W'$, where $W=g_0\dots g_{n-1}V$ and $W'=g'_0\dots g'_{n-1}V$
    \end{itemize}
\end{fact}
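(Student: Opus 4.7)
The plan is to realize $X_0, \dots, X_{n-1}$ as the image of a carefully chosen countable blueprint structure $B \supseteq V$ under an embedding into $G_*$, exploiting the ultrahomogeneity (equivalently, the extension axioms) of the Fraïssé limit. The main technical point I anticipate is the simultaneous consistency of the several constraints imposed on $B$; this will be clean because each $n$-ary argument pattern of $R$ in $B$ is pinned down by at most one constraint (cross-edges with $V$ involve at least one vertex in $V$ and at least one in $\bigcup_i X_i$, while the internal $G_{n,p}$-structure involves only elements of $\bigcup_i X_i$).

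\emph{Step 1: the blueprint.} I would take $B = V \sqcup \bigsqcup_{i<n} X_i$ with each $X_i$ countably infinite, and extend the order on $V$ to $B$ so that each $X_i$ sits in the same order-position relative to $V$ as $g_i$ and so that $X_0 < \dots < X_{n-1}$; in the partite case, I would additionally place $X_i \subseteq P_i$. Next, prescribe the cross-edges with $V$ uniformly: for every $1 \leq k \leq n-1$, every size-$(n-k)$ subset $\{i_0, \dots, i_{n-k-1}\} \subseteq \{0, \dots, n-1\}$ and every $\overline{v} \in V^k$, declare $R(x_{i_0}, \dots, x_{i_{n-k-1}}, \overline{v})$ to hold in $B$ for \emph{every} choice $x_{i_j} \in X_{i_j}$ iff $R(g_{i_0}, \dots, g_{i_{n-k-1}}, \overline{v})$ holds in $G_*$. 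Finally, equip $X_0 \cup \dots \cup X_{n-1}$ with the internal hyperedges coming from a fixed copy of $G_{n,p}$ whose parts are $X_0, \dots, X_{n-1}$.

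\emph{Step 2: embedding $B$ into $G_*$.} Every finite substructure of $B$ is a finite ordered $n$-uniform (resp.\ $n$-partite $n$-uniform) hypergraph and hence lies in $\age(G_*)$. Enumerating $B \setminus V = \{b_1, b_2, \dots\}$ and appealing to the extension axioms of $\Th(G_*)$ recorded in the remark following Fact \ref{fact: ordered hypergraphs are ramsey}, I would inductively choose images $b'_i \in G_*$ realizing the same quantifier-free type over the partial image as $b_i$ has over the corresponding initial segment of $B$; consistency at each step holds because the resulting finite configuration lies in the age. After identifying each $X_i$ with its image, the first bullet of the statement follows at once from the design of the internal hyperedges.

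\emph{Step 3: verification of the adjacency clause.} Given any $g'_i \in X_i$, the tuple $W' = g'_0 \dots g'_{n-1} V$ shares its order/part structure with $W$, and every cross-edge of $W'$ with $V$ matches the corresponding cross-edge of $W$ by construction, so $W$ is $V$-adjacent to $W'$. The only remaining $n$-ary relation entirely inside $\{g'_0, \dots, g'_{n-1}\} \cup V$ that could still differ is $R(g'_0, \dots, g'_{n-1})$ itself; if it matches $R(g_0, \dots, g_{n-1})$ then in fact $W \cong_{\mL_{g*}} W'$, otherwise the $V$-adjacency alternative holds, so the disjunction in the conclusion is satisfied in every case.
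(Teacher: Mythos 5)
The paper cites this fact from \cite{Chernikov2014OnN} without supplying a proof, so there is no internal argument to compare yours against; I evaluate the proposal on its own. Your blueprint-and-embed argument via Fraïssé universality and homogeneity is the natural one, and it is essentially correct, with two small points to pin down. First, in Step 1 each $X_i$ must be chosen with order type $\mathbb{Q}$, and the internal $G_{n,p}$-structure you put on $\bigsqcup_i X_i$ should use exactly that order: Step 2's embedding preserves and reflects $<$, so the induced order on the image $G'=X_0\dots X_{n-1}$ is isomorphic to whatever order you declared on $\bigsqcup_i X_i$, and the required isomorphism $(G';<;R)\cong(G_{n,p};<;R)$ fails unless that declared order is $n$ consecutive copies of $\mathbb{Q}$; saying only ``countably infinite'' leaves this open, since Step 1 fixes the order of $X_i$ relative to $V$ and relative to the other $X_j$ but not within $X_i$ itself. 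Second, in the non-partite case $G_*=G_n$ your blueprint leaves undeclared those $n$-tuples meeting $V$ in which two or more coordinates lie in a common $X_i$; declare them non-edges so that the blueprint is a fully specified $n$-uniform hypergraph. Neither point touches Step 3, since no such tuple lies inside $W'=g'_0\dots g'_{n-1}V$ (which contains exactly one vertex from each $X_i$) and none enters the definition of $V$-adjacency, but both must be settled for Step 2's inductive extension argument to be well-posed. With these clarifications your argument is complete; in particular your observation that $V$-adjacency holds unconditionally by construction, so the stated disjunction is automatic, is exactly right.
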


We are now ready to prove the main theorem of the section.

\begin{teor} \label{n-dep and collapsing}
    Let $T$ be a complete continuous logic theory. The following are equivalent:
    \begin{enumerate}
        \item $T$ is $n$-dependent.
        \item Every $G_{n+1,p}$-indiscernible is $\mL_{op}$-indiscernible.
        \item Every $G_{n+1}$-indiscernible is order-indiscernible.
    \end{enumerate}
\end{teor}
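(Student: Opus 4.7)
My plan is to prove $(1)\Leftrightarrow(2)$ and $(1)\Leftrightarrow(3)$ separately but in parallel; both equivalences follow the same strategy, using Proposition \ref{IP_n iff encoding partite} for the first and Proposition \ref{IP_n iff coding nonpartite} for the second.

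For the easy directions $(2)\Rightarrow(1)$ and $(3)\Rightarrow(1)$, suppose $T$ has $IP_n$. By Proposition \ref{IP_n iff encoding partite}, some formula $f$ encodes $G_{n+1,p}$ as a partite hypergraph witnessed by a $G_{n+1,p}$-indiscernible $(a_g)_{g\in G_{n+1,p}}$ with separating thresholds $r<s$. Because the theory of $G_{n+1,p}$ produces tuples in $P_0\times\cdots\times P_n$ with the same $\mL_{op}$-type but with opposite truth values of $R$, the sequence $(a_g)$ is detected by $f$ to fail $\mL_{op}$-indiscernibility. The same reasoning with Proposition \ref{IP_n iff coding nonpartite} establishes $(3)\Rightarrow(1)$.

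For the main implication $(1)\Rightarrow(2)$, I argue by contradiction. Suppose $(a_g)_{g\in G_{n+1,p}}$ is $G_{n+1,p}$-indiscernible but not $\mL_{op}$-indiscernible, so there are a formula $\varphi$ and tuples $\bar g,\bar g'\subset G_{n+1,p}$ with the same $\mL_{op}$-qftp and $\varphi(a_{\bar g})\ne\varphi(a_{\bar g'})$. Apply Fact \ref{ Sequence of adjacent graphs} to obtain $\bar g=W_0,W_1,\dots,W_k$ with each $W_{i+1}$ adjacent to $W_i$ and $W_k\cong_{\mL_{opg}}\bar g'$. By $G_{n+1,p}$-indiscernibility $\varphi(a_{W_k})=\varphi(a_{\bar g'})$, so the pigeonhole principle yields some $i$ with $\varphi(a_{W_i})\ne\varphi(a_{W_{i+1}})$. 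Write $W_i=g_0\cdots g_nV$ and $W_{i+1}=g_0'\cdots g_n'V$ with $W_i$ being $V$-adjacent to $W_{i+1}$; after swapping if necessary, assume $R(g_0,\dots,g_n)$ holds while $R(g_0',\dots,g_n')$ does not. Invoke Fact \ref{Isomorphic copy of random partite hypergraph} to produce infinite $X_0<\cdots<X_n\subseteq G_{n+1,p}$ whose union $G'$ carries a partite hypergraph structure isomorphic to $G_{n+1,p}$, and such that for every choice $h_i\in X_i$ the set $h_0\cdots h_nV$ is either $\mL_{opg}$-isomorphic to $W_i$ (when $R(h_0,\dots,h_n)$ holds) or $V$-adjacent to $W_i$ in exactly the pattern of $W_{i+1}$ (otherwise). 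Set $\psi(x_0,\dots,x_n):=\varphi(x_0,\dots,x_n,(a_v)_{v\in V})$. Then $G_{n+1,p}$-indiscernibility forces $\psi(a_{h_0},\dots,a_{h_n})$ to take the value $\varphi(a_{W_i})$ or $\varphi(a_{W_{i+1}})$ according to $R(h_0,\dots,h_n)$, so $\psi$ together with $(a_g)_{g\in G'}$ encodes $G_{n+1,p}$ as a partite hypergraph. Proposition \ref{IP_n iff encoding partite} then yields $IP_n$, contradicting (1).

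The implication $(1)\Rightarrow(3)$ is obtained by the same construction carried out inside $G_{n+1}$: a failure of order-indiscernibility for a $G_{n+1}$-indiscernible sequence localises, via the $G_{n+1}$-versions of Facts \ref{ Sequence of adjacent graphs} and \ref{Isomorphic copy of random partite hypergraph}, to a $V$-adjacent pair, and Fact \ref{Isomorphic copy of random partite hypergraph} still supplies a partite copy of $G_{n+1,p}$ inside $G_{n+1}$ on which the parametrised formula $\psi$ encodes the edge relation, yielding $IP_n$ through Proposition \ref{IP_n iff encoding partite}. The principal obstacle throughout is the passage from a single anomalous adjacent pair to a formula that encodes the entire hypergraph on the partite structure produced by Fact \ref{Isomorphic copy of random partite hypergraph}; this is precisely where the $V$-adjacency clauses, which force agreement on every $R$-relation meeting $V$, are indispensable, since without them $G_{n+1,p}$-indiscernibility would fail to isolate the dependence of $\psi$ on $R(h_0,\dots,h_n)$ alone.
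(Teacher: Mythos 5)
Your proposal is correct and follows essentially the same route as the paper: for the easy directions you use the indiscernible-witnessed encoding statements from Propositions \ref{IP_n iff encoding partite} and \ref{IP_n iff coding nonpartite}, and for the hard directions you reduce a failure of $\mL_{op}$- (or order-) indiscernibility to a $V$-adjacent pair via Fact \ref{ Sequence of adjacent graphs}, then invoke Fact \ref{Isomorphic copy of random partite hypergraph} to extract a copy of $G_{n+1,p}$ on which a parametrized formula encodes the edge relation. The only cosmetic difference is that you make explicit the pigeonhole step along the adjacency chain $W_0,\dots,W_k$, which the paper packages into a ``without loss of generality.''
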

    \begin{proof}
            $(1)\implies(2)$. Let $(a_g)_{g\in G_{n+1,p}}$ be a $G_{n+1,p}$-indiscernible sequence which is not $\mL_{op}$-indiscernible. Then there are $\mL_{op}$-isomorphic $W,W'\subset G_{n+1}$ subsets of size $m$, a formula $f(x_0,\dots,x_{m-1})$ and $r<s\in \mathbb{R}$ such that $f( (a_g)_{g\in W}) \leq r$ and $f( (a_g)_{g\in W'})\geq s$ (where the elements $a_g$ are substituted for the variables $x_0,\dots,x_{m-1}$ according to the ordering on $W$ and $W'$). Without loss of generality, by Fact \ref{ Sequence of adjacent graphs}, we may assume that $W$ is $V$-adjacent to $W'$ for some subset $V$ such that $W=g_0\dots g_n V$, $W'=g'_0\dots g'_n V$, $R(g_0,\dots g_n)$ and $\neg R(g'_0,\dots g'_n)$. 

            Now we apply Fact \ref{Isomorphic copy of random partite hypergraph} to $V$ and $g_0,\dots,g_n$. This yields $G'\subseteq G_{n+1,p}$ such that for every $(h_0,\dots,h_n)\in \prod_{i\leq n} P_i(G')$ $$ R(h_{0},\dots,h_{n}) \iff h_0\dots h_n V\cong_{\mL_{opg}}W $$ and $$ \neg R(h_0,\dots,h_n) \iff h_0\dots h_nV\cong_{\mL_{opg}}W'.$$ Recall that the sequence $(a_g)_{g\in G_{n+1,p}}$ is $G_{n+1,p}$-indiscernible and let $f'$ be the formula defined by permuting the variables $(x_0,\dots,x_{m-1})$ of $f$ in such a way that the first $n+1$-variables are the ones corresponding to $h_0,\dots,h_n$ according to the ordering on the set $\{h_0,\dots,h_n\}\cup V$. Then,
            $$ f'(a_{h_0},\dots, a_{h_n},A)\leq r \iff R(h_{0},\dots,h_{n})$$
            and
            $$f'(a_{h_0},\dots, a_{h_n},A)\geq s \iff \neg R(h_{0},\dots,h_{n}),$$ where $A=(a_g)_{g\in V}$. Since $G'$ is isomorphic to $G_{n+1,p}$, by Proposition \ref{IP_n iff encoding partite}, the formula $f'(x,y_{0},\dots, y_{n-1},A)$ has $IP_n$ and hence, by Remark \ref{naming parameters and dummy variables} there is a continuous logic formula $g(x,z_{0},\dots, z_{n-1})$ which has $IP_n$.
            
            $(1)\implies(3)$. The proof is exactly as the proof of $(1)\implies (2)$.
            
            $(2)\implies(1)$. It follows from Proposition \ref{IP_n iff encoding partite}. If the formula $f$ encodes $G_{n+1,p}$ as a partite hypergraph witnessed by a $G_{n+1,p}$-indiscernible sequence $(a_g)_{g\in G_{n+1,p}}$, then $(a_g)_{g\in G_{n+1,p}}$ cannot be $\mL_{op}$-indiscernible.
            
            $(3)\implies(1)$ Follows from Proposition \ref{IP_n iff coding nonpartite}. If $T$ has $IP_n$, there is a continuous logic formula encoding $G_{n+1}$ as a hypergraph witnessed by a $G_{n+1}$-indiscernible sequence$ (a_g)_{g\in G_{n+1}}$. Then $(a_g)_{g\in G_{n+1}}$ cannot be order-indiscernible.
    \end{proof}

We know explain the error in the proof of \cite[Theorem 5.4 $(3)\implies (2)$]{Chernikov2014OnN}

Without loss of generality we write $$G_{n+1,p}=\{ g^i_q: i<n; q\in \mathbb{Q}\},$$ where $g^i_q \in P_i(G_{n+1,p})$ and $g^i_q<g^i_p$ for all $q<p\in \mathbb{Q}$. We define the ordered $(n+1)$-uniform hypergraph $G^*_{n+1}$ as follows:
\begin{itemize}
    \item $G^*_{n+1}=\{ h_q: h_q=(g^0_q,\dots, g^n_q), q\in \mathbb{Q} \}$,
    \item $ R_{G^*_{n+1}}( h_{q_0},\dots, h_{q_n} ) \iff R_{G_{n+1,p}}( g^0_{q_0},\dots, g^n_{q_n} ) $ for $q_0<\dots<q_n$,
    \item $h_q<h_p \iff q<p$.  
\end{itemize}
Clearly, $G^*_{n+1}$ embeds every finite ordered $(n+1)$-uniform hypergraph.

Let $(a_g)_{g\in G_{n+1,p}}$ be a $G_{n+1,p}$-indiscernible sequence which is not $\mL_{op}$-indiscernible. For $h_q\in G^*_{n+1}$, let $b_{h_q}=( a_{g^0_q},\dots, a_{g^n_q} )$ and consider the $G^*_{n+1}$-indexed sequence $(b_h)_{h\in G^*_{n+1}}$. The following claim is made in the proof:
\begin{claim}
    Whenever $X\equiv_{<_{G^*_{n+1}},R_{G^*_{n+1}}}Y \subseteq G^*_{n+1}$, we have $$ \tp( (b_h)_{h\in X} )=\tp((b_h)_{h\in Y}) .$$
\end{claim}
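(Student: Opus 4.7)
The plan is to reduce the claimed type-equality for the tuples $(b_h)$ to a type-equality for the underlying parameters in $G_{n+1,p}$, and then invoke the hypothesis that $(a_g)_{g\in G_{n+1,p}}$ is $G_{n+1,p}$-indiscernible. Enumerate $X=\{h_{q_1}<\dots<h_{q_k}\}$ and $Y=\{h_{p_1}<\dots<h_{p_k}\}$, so that the hypothesis $X\equiv_{<,R}Y$ gives
\[
R_{G^*_{n+1}}(h_{q_{j_0}},\dots,h_{q_{j_n}})\iff R_{G^*_{n+1}}(h_{p_{j_0}},\dots,h_{p_{j_n}})
\]
for every $j_0<\dots<j_n\leq k$. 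Each $h_{q_j}$ unpacks as $(g^0_{q_j},\dots,g^n_{q_j})$, so $X$ and $Y$ are backed by the sets $\widetilde X=\{g^i_{q_j}:i\leq n,\;j\leq k\}$ and $\widetilde Y=\{g^i_{p_j}:i\leq n,\;j\leq k\}$ inside $G_{n+1,p}$.

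I would next consider the bijection $\phi\colon\widetilde X\to\widetilde Y$ given by $\phi(g^i_{q_j})=g^i_{p_j}$, and try to show it is an $\mL^{n+1}_{opg}$-isomorphism of the induced substructures. Preservation of the partition predicates is immediate, and preservation of $<$ is routine: within each part $P_i$ the order agrees with the indices $q_j$ (respectively $p_j$), and the parts are ordered $P_0<\dots<P_n$ independently of any choices. Once $\phi$ is known to be an $\mL^{n+1}_{opg}$-isomorphism, $G_{n+1,p}$-indiscernibility of $(a_g)$ yields $\tp((a_g)_{g\in\widetilde X})=\tp((a_g)_{g\in\widetilde Y})$, and since each $b_{h_{q_j}}$ is just the arrangement of the corresponding $a$'s, the required equality $\tp((b_h)_{h\in X})=\tp((b_h)_{h\in Y})$ follows.

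The main obstacle, and where I expect the argument to break down, is the verification that $\phi$ preserves the hypergraph relation $R_{G_{n+1,p}}$. The hypothesis $X\equiv_{<,R}Y$ only records information about edges of the form
\[
R_{G_{n+1,p}}(g^0_{q_{j_0}},g^1_{q_{j_1}},\dots,g^n_{q_{j_n}})\qquad\text{with }q_{j_0}<q_{j_1}<\dots<q_{j_n},
\]
that is, the edges whose part-superscript follows the relative order of the $q$-indices. The qf-type of $\widetilde X$ in $G_{n+1,p}$ also encodes "twisted" edges
\[
R_{G_{n+1,p}}(g^0_{q_{j_{\tau(0)}}},g^1_{q_{j_{\tau(1)}}},\dots,g^n_{q_{j_{\tau(n)}}})
\]
for nontrivial permutations $\tau$ of $\{0,\dots,n\}$, and by the universality of $G_{n+1,p}$ these are logically independent from the ``straight'' edges. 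The hypothesis gives no control over them, so $\phi$ need not preserve $R_{G_{n+1,p}}$ and the appeal to $G_{n+1,p}$-indiscernibility cannot be made. Any attempt to repair this by reindexing or by using randomness to find an alternative witness inside $G_{n+1,p}$ seems to require genuinely stronger information than $X\equiv_{<,R}Y$ supplies, which is presumably what Counterexample~\ref{counterexample} exploits.
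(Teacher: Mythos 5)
You have correctly identified that this claim is \emph{false} --- the paper does not prove it but refutes it via Counterexample~\ref{counterexample}. Your diagnosis is on the right track: the $R_{G^*_{n+1}}$-structure on $X$ records only the ``straight'' edges in $G_{n+1,p}$ (those of the form $R_{G_{n+1,p}}(g^0_{q_{j_0}},\dots,g^n_{q_{j_n}})$ with $q_{j_0}<\dots<q_{j_n}$), while $G_{n+1,p}$-indiscernibility of $(a_g)$ is controlled by the full $\mL^{n+1}_{opg}$-quantifier-free type of $\widetilde X$, which carries strictly more information. The intended bijection $\phi$ therefore need not be an $\mL^{n+1}_{opg}$-isomorphism, and the reduction breaks down exactly where you say it does.

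Two points of comparison with the paper's actual counterexample. First, the unconstrained edges you isolate are the ``permuted'' ones $R_{G_{n+1,p}}(g^0_{q_{j_{\tau(0)}}},\dots,g^n_{q_{j_{\tau(n)}}})$ for nontrivial $\tau$ acting on $n+1$ \emph{distinct} indices; these only arise when $|X|\geq n+1$. The paper exploits a cheaper source of uncontrolled information: edges with \emph{repeated} $q$-indices. Already for $X=\{h_{q_0},h_{q_1}\}$ and $n\geq 2$, the $R_{G^*_{n+1}}$-type of the pair is vacuously uniform (the relation has arity $n+1\geq 3$ and so cannot hold on two elements), yet the $\mL^{n+1}_{opg}$-type of $\widetilde X$ still sees $R_{G_{n+1,p}}(g^0_{q_0},\dots,g^{n-1}_{q_0},g^n_{q_1})$; by randomness one can flip this edge while preserving the order type, producing $Y=\{h_{q_0},h_{\tilde q}\}$ with $X\equiv_{<,R}Y$ and $\tp(b_{h_{q_0}},b_{h_{q_1}})\neq\tp(b_{h_{q_0}},b_{h_{\tilde q}})$. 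Second, the paper goes beyond diagnosing the gap and commits to a concrete witness --- $T=\Th(G_{n+1,p})$, the identity sequence $(g)_{g\in G_{n+1,p}}$, and the pair $X,Y$ above --- which is what turns the structural observation into a genuine refutation; your write-up correctly locates the obstruction but stops short of producing that witness.
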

However, this claim is not true as shown by the following counterexample. 
\begin{counterexample}\label{counterexample}
    Consider the theory $T=\Th(G_{n+1,p})$ and the sequence $(g)_{g\in G_{n+1,p}}$ (This sequence corresponds to the sequence $(a_g)_{g\in G_{n+1,p}}$ of the claim, which is clearly $G_{n+1,p}$-indiscernible but not order-indiscernible). Then, for $h_q=(g^0_q,\dots, g^n_q)$ we have $b_{h_q}:=(g^0_q,\dots,g^n_q)$. Let $X:=\{h_{q_0},h_{q_1} \}$ for some $q_0<q_1\in \mathbb{Q}$, by randomness, there is $q_0<\Tilde{q}\in \mathbb{Q}$ and $h_{\Tilde{q}}=(g^0_{\Tilde{q}},\dots,g^n_{\Tilde{q}})$ such that $$ R_{ G_{n+1,p}}(g^0_{q_0},\dots, ,g^{n-1}_{q_0}, g^n_{q_1})\iff \neg  R_{ G_{n+1,p}}(g^0_{q_0},g^1_{q_0},\dots,g^{n-1}_{q_0}, g^n_{\Tilde{q}}).$$
Thus, $h_{q_0},h_{q_1}\equiv_{<_{G^*_{n+1}},R_{G^*_{n+1}}}  h_{q_0},h_{\Tilde{q}}$ since: \begin{itemize}
    \item $\Th(G^*_{n+1})$ has quantifier elimination,
    \item $h_{q_0}<h_{q_1}$ and $h_{q_0}<h_{\Tilde{q}}$,
    \item $R_{G^*_{n+1}}$ has arity $n+1$.%, so it cant say anything about the pairs $(h_{q_0},h_{q_1})$ and $(h_{q_0},h_{\Tilde{q}})$. The only possibility is that we repeat at least two times $h_{q_0}$, $h_{q_1}$ or $h_{\Tilde{q}}$ (to complete to the appropiate arity) and then only $\neg R_{G^*_{n+1}}$ holds by definition.
\end{itemize}

We also have $\tp(b_{h_{q_0}},b_{h_{q_1}})\neq \tp(b_{h_{q_0}},b_{h_{\Tilde{q}}})$ since, by our choice of $\Tilde{q}$, $$ R_{ G_{n+1,p}}(g^0_{q_0},\dots, ,g^{n-1}_{q_0}, g^n_{q_1})\iff \neg  R_{ G_{n+1,p}}(g^0_{q_0},g^1_{q_0},\dots,g^{n-1}_{q_0}, g^n_{\Tilde{q}}).$$
\end{counterexample}

The following is due to Artem Chernikov, Daniel Palacín and Kota Takeuchi. If we substitute the claim above in the original proof by the following: Let $A\subset G_{n+1,p}$ be any finite substructure. Without loss of generality, we may assume that if $g^i_q,g^j_p\in A$ and $i<j$ then $q<p$. Let $A^*=\{ h_q: g^i_q\in A \}$ and let $\varphi^*((x^0_q,\dots,x^{n-1}_q)_{h_q\in A^*}):=\varphi((x^i_q)_{g^i_q\in A})$ for each formula $\varphi((x^i_q)_{g^i_q\in A})$. 
\begin{claim}
  Whenever $A^*\equiv_{<_{G^*_{n+1}},R_{G^*_{n+1}}}X \subseteq G^*_{n+1}$, we have $$ \tp_{\varphi^*}( (b_h)_{h\in A^*} )=\tp_{\varphi^*}((b_h)_{h\in X}) .$$
\end{claim}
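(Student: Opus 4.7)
The plan is to use the hypothesis on $A$ to lift any order-and-$R$-isomorphism $A^*\cong X$ in $G^*_{n+1}$ to a genuine $\mL_{opg}$-isomorphism between two subsets of $G_{n+1,p}$, after which $G_{n+1,p}$-indiscernibility of $(a_g)_{g\in G_{n+1,p}}$ yields the desired equality. The restriction on $A$ (that $g^i_q,g^j_p\in A$ with $i<j$ forces $q<p$) is precisely what makes such a lift possible, and is what the original claim omitted.

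Concretely, I would enumerate $A=\{g^{i_0}_{q_0},\dots,g^{i_{m-1}}_{q_{m-1}}\}$ with $q_0<\dots<q_{m-1}$ (the $q_k$ are automatically pairwise distinct, otherwise the hypothesis forces $q_k<q_k$), so that $A^*=\{h_{q_0},\dots,h_{q_{m-1}}\}$ inherits this linear order. Writing $X=\{h_{r_0},\dots,h_{r_{m-1}}\}$ with $r_0<\dots<r_{m-1}$ and with the given $\mL_{og}$-isomorphism $A^*\to X$ sending $h_{q_k}\mapsto h_{r_k}$, I then set $B:=\{g^{i_k}_{r_k}:k<m\}\subseteq G_{n+1,p}$ and consider the bijection $\pi:g^{i_k}_{q_k}\mapsto g^{i_k}_{r_k}$.

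The key technical step is to verify that $\pi$ is an $\mL_{opg}$-isomorphism $A\to B$. Parts are preserved by construction; for the ordering, the hypothesis on $A$ forces $i_k<i_l\Rightarrow q_k<q_l$ and hence $r_k<r_l$, so the case analysis reduces to the order-preservation of the map $q_k\mapsto r_k$ (which is immediate since the $\mL_{og}$-isomorphism on $A^*\cup X\subseteq G^*_{n+1}$ preserves $<$, and $h_q<h_p\iff q<p$). For the $R$-relation, any $R$-related tuple in $A$ must use exactly one element from each part of the partition, so after permutation it has the form $(g^0_{q_{k_0}},\dots,g^n_{q_{k_n}})$ with $i_{k_j}=j$; the hypothesis on $A$ then guarantees $q_{k_0}<\dots<q_{k_n}$, so that the definition of $R_{G^*_{n+1}}$ applies directly and gives $R_{G_{n+1,p}}(g^0_{q_{k_0}},\dots,g^n_{q_{k_n}})\iff R_{G^*_{n+1}}(h_{q_{k_0}},\dots,h_{q_{k_n}})$, and similarly on the $r$-side. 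The assumption $A^*\equiv_{<_{G^*_{n+1}},R_{G^*_{n+1}}} X$ equates the two sides.

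Once $\pi$ is known to be an $\mL_{opg}$-isomorphism, $G_{n+1,p}$-indiscernibility of $(a_g)_{g\in G_{n+1,p}}$ gives $\varphi((a_{g^{i_k}_{q_k}})_{k})=\varphi((a_{g^{i_k}_{r_k}})_{k})$, which by the defining equality $\varphi^*((x^0_q,\dots,x^n_q)_{h_q\in A^*})=\varphi((x^i_q)_{g^i_q\in A})$ is exactly $\varphi^*((b_h)_{h\in A^*})=\varphi^*((b_h)_{h\in X})$. The main obstacle, and the whole point of reformulating the original erroneous claim, is to locate the role of the hypothesis on $A$: it is precisely what ensures that every $R$-relation internal to $A$ is of the \emph{monotone diagonal} form that $R_{G^*_{n+1}}$ records, so that the limited data carried by the $\mL_{og}$-type of $A^*$ in $G^*_{n+1}$ suffices to recover the full $\mL_{opg}$-type of $A$ in $G_{n+1,p}$. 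This is exactly what Counterexample~\ref{counterexample} shows must fail without the hypothesis.
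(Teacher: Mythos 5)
Your proof is correct. The paper itself does not actually prove this claim: it states the corrected claim as a suggestion due to Chernikov, Palac\'in and Takeuchi and then only says ``Then, the proof goes through.'' Your argument therefore supplies the missing justification, and it does so in the right way: lift the $\mL_{og}$-isomorphism $A^*\to X$ in $G^*_{n+1}$ to an $\mL_{opg}$-isomorphism $\pi\colon A\to B$ in $G_{n+1,p}$, then invoke $G_{n+1,p}$-indiscernibility of $(a_g)$ together with the defining identity $\varphi^*((x^0_q,\dots,x^n_q)_{h_q\in A^*})=\varphi((x^i_q)_{g^i_q\in A})$. You also correctly pinpoint where the hypothesis on $A$ (that $g^i_q,g^j_p\in A$ and $i<j$ force $q<p$) is actually needed: it guarantees that any transversal $(g^0_{q_{k_0}},\dots,g^n_{q_{k_n}})$ of the partition lying inside $A$ has strictly increasing indices $q_{k_0}<\dots<q_{k_n}$, which is exactly the ``monotone diagonal'' configuration that $R_{G^*_{n+1}}$ records, and which Counterexample~\ref{counterexample} shows can fail otherwise.

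One small over-attribution: you invoke the hypothesis on $A$ in the order-preservation step as well, but in fact order preservation holds without it. For $i_k\neq i_l$ the comparison is decided by the part indices (which $\pi$ fixes), and for $i_k=i_l$ it reduces to $q_k<q_l\iff r_k<r_l$, which is already supplied by the $\mL_{og}$-isomorphism $h_{q_k}\mapsto h_{r_k}$. The restriction on $A$ is used only for the $R$-clause, which is the point your last paragraph makes explicit. This is a matter of exposition and does not affect the correctness of the argument.
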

Then, the proof goes through. Our counterexample shows that one has to work with a restricted family of formulas $\varphi^*$.

%Consider the sequence $(g)_{g\in G_{n+1,p}}$. Then for $h_q=(g^0_q,\dots, g^n_q)$ we have $b_g:=(g^0_q,\dots,g^n_q)$. Let $X:=\{h_{q_0},h_{q_1} \}$ for some $q_0<q_1\in \mathbb{Q}$, by randomness, there is $q_0<\Tilde{q}\in \mathbb{Q}$ and $h_{\Tilde{q}}=(g^0_{\Tilde{q}},\dots,g^n_{\Tilde{q}})$ such that $$ R(g^0_q,\dots, g^n_q)\iff \neg  R(g^0_q,g^1_q,\dots,g^{n-1}_q, g^n_{\Tilde{q}}) .$$
%Then, $h_{q_0},h_{q_1}\equiv_{<,R}  h_{q_0},h_{\Tilde{q}}$ but  $\tp(b_{h_{q_0}},b_{h_{q_1}})\neq \tp(b_{h_{q_0}},b_{h_{\Tilde{q}}}) $.

The formula $\psi$ that we constructed in the proof of Proposition \ref{IP_n iff coding nonpartite} will be important throughout the remainder of the chapter.

\begin{notation}\label{notation psi_f}
     Given a continuous logic formula $f(y_0,\dots,y_{n-1},x)$ we denote the symmetric formula constructed in the proof of Proposition \ref{IP_n iff coding nonpartite} as $\psi_f$. Namely: $$ \psi_f(y^0_0y^1_0\cdots y^{n-1}_0x_0,\dots,y^0_ny^1_n\cdots y^{n-1}_nx_n)=\min_{\sigma\in Sym(n)} \{ f(y^0_{\sigma(0)},\dots ,y^{n-1}_{\sigma({n-1})},x_{\sigma(n)}) \}.$$
\end{notation}

%The proof of $(1)\implies (2)$ in the previous lemma might be adapted to show that if the continuous formulas $\varphi(x,y_0,\dots,y_{n-1})$ and $\psi(x,y_0,\dots,y_{n-1})$ are $n$-dependent, then so are $\max\{ \varphi,\psi\}$ and $\min \{ \varphi,\psi\}$. It 
It turns out that $IP_n$ of the formulas $f$ and $\psi_f$ are equivalent. The implication $(2)\implies (1)$ in the next result can also be deduced from \cite[Proposition 10.4 and Proposition 10.6]{Chernikov2020HypergraphRA} since the set of connectives $\{\neg, \frac{1}{2}, \dot{-}\}$ is full (we can approximate every continuous formula uniformly by formulas constructed using only that set of connectives). However, we provide a direct proof with ideas that will be useful in the next section.
\begin{lema} \label{IP_n f and psi}
    Let $f(y_0,\dots,y_{n-1},x)$ be a continuous logic formula. Then, the following are equivalent:
    \begin{enumerate}
        \item $f$ has $IP_n$.
        \item $\psi_f$ has $IP_n$.
    \end{enumerate}
\end{lema}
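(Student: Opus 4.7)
My plan is to reduce both directions to the partite encoding characterization in Proposition \ref{IP_n iff encoding partite}, and then use $G_{n+1,p}$-indiscernibility to control the inner $\min_\sigma$ appearing in the definition of $\psi_f$.

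For $(1)\Rightarrow(2)$ the shortest route is to recycle the proof of Proposition \ref{IP_n iff coding nonpartite}, direction $(1)\Rightarrow(2)$: the formula $\psi_f$ is constructed there precisely to encode every finite $(n+1)$-uniform hypergraph with uniform constants $r<s$ whenever $f$ has $IP_n$. Every finite $(n+1)$-partite $(n+1)$-uniform hypergraph is, after forgetting the partition, an $(n+1)$-uniform hypergraph, so $\psi_f$ encodes the partite class as well. Applying Proposition \ref{IP_n iff encoding partite} to $\psi_f$, with its last tuple-variable treated as the distinguished one, then yields $IP_n$ for $\psi_f$.

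The substantive direction is $(2)\Rightarrow(1)$. Starting from $IP_n$ of $\psi_f$, Proposition \ref{IP_n iff encoding partite} produces a $G_{n+1,p}$-indiscernible sequence $(\bar a_g)_{g\in G_{n+1,p}}$ of $(n+1)$-tuples $\bar a_g=(a^0_g,\dots,a^n_g)$ and reals $r<s$ such that $\psi_f(\bar a_{g_0},\dots,\bar a_{g_n})\leq r$ iff $R(g_0,\dots,g_n)$ for $g_i\in P_i$, and $\geq s$ otherwise. The key observation is that, by $G_{n+1,p}$-indiscernibility, for each fixed $\sigma\in\mathrm{Sym}(n+1)$ the value $f(a^0_{g_{\sigma(0)}},\dots,a^n_{g_{\sigma(n)}})$ depends on $(g_0,\dots,g_n)$ only through whether $R(g_0,\dots,g_n)$ holds, so it takes only two constants $v^+_\sigma, v^-_\sigma$. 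The $\psi_f$-encoding then translates into $\min_\sigma v^+_\sigma\leq r$ together with $v^-_\sigma\geq s$ for every $\sigma$; fixing any $\sigma^*$ attaining the minimum in the first inequality singles out one permutation separating $R$-tuples from non-$R$-tuples via $f$ itself. Substituting $h_j:=g_{\sigma^*(j)}$ and using symmetry of $R$ identifies $f$, evaluated on the reindexed sequence $e_g:=a^{(\sigma^*)^{-1}(i)}_g$ for $g\in P_i$, as an encoding of $G_{n+1,p}$ with the relabelled partition $Q_j:=P_{\sigma^*(j)}$; since $(G_{n+1,p},R,Q_0,\dots,Q_n)$ is still isomorphic to $G_{n+1,p}$ as a partite hypergraph, Proposition \ref{IP_n iff encoding partite} applied in the reverse direction gives $IP_n$ for $f$.

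The main obstacle is the $\sigma^*$-selection step: a priori the $\sigma$ realizing the minimum in $\psi_f$ could vary with the tuple $(g_0,\dots,g_n)$, which would prevent the argument from producing a single witness for $IP_n$ of $f$. What rescues the argument is precisely $G_{n+1,p}$-indiscernibility, which collapses each $f_\sigma$ to a two-valued function on the relevant class of tuples, turning the pointwise minimum into a simultaneous one and allowing $\sigma^*$ to be chosen once and for all. The remaining relabelling of parts is routine bookkeeping (one could alternatively invoke Corollary \ref{Corolary: preservation under permutation of variables} to normalise $\sigma^*$ to the identity).
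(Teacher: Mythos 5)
Your proof is correct, and for $(2)\Rightarrow(1)$ it takes a route that is genuinely different in its mechanics from the paper's, even though both are ultimately grounded in ERP for ordered partite hypergraphs. The paper starts from an arbitrary witness that $\psi_f$ encodes $G_{n+1,p}$, fixes a linear order on $\mathrm{Sym}(n+1)$, colours each $R$-edge by the first $\sigma$ with $f_\sigma\leq r$, invokes ERP of $\age(G_{n+1,p})$ to get a monochromatic copy of each finite ordered partite hypergraph, and then uses a compactness/pigeonhole step to extract a single $\sigma$ that encodes all of $G_{n+1,p}$. You instead take the $G_{n+1,p}$-indiscernible witness directly from item (4) of Proposition \ref{IP_n iff encoding partite}, observe that on tuples $(g_0,\dots,g_n)$ with $g_i\in P_i$ the $\mL_{opg}$-quantifier-free type is determined by $R(g_0,\dots,g_n)$ alone (the order and partition data are forced), so each $f_\sigma$ is already two-valued and the minimum is attained by a single $\sigma^*$ uniformly; no further Ramsey colouring or compactness argument is needed. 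This is cleaner precisely because the Ramsey work has been front-loaded into the extraction of the indiscernible witness. The remaining relabelling of parts and the passage from $f_{\sigma^*}$ back to $f$ is handled, as you note, by Remark \ref{naming parameters and dummy variables} and Corollary \ref{Corolary: preservation under permutation of variables}. Your $(1)\Rightarrow(2)$ argument coincides with the paper's.
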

\begin{proof}
        $(1)\implies (2)$. Follows from the proof of $(1)\implies (2)$ of Proposition \ref{IP_n iff coding nonpartite} and Proposition \ref{IP_n iff encoding partite}.
        
        $(2)\implies (1)$. First, recall that if a formula is $n$-dependent and we add dummy variables, then the new formula is still $n$-dependent and that $n$-dependence is preserved under permutations of variables (by Remark \ref{naming parameters and dummy variables} and Corollary \ref{Corolary: preservation under permutation of variables}). 
    
    We consider the formula $$f_\sigma(y^0_0y^1_0\cdots y^{n-1}_0x_0,\dots,y^0_ny^1_n\cdots y^{n-1}_nx_n)=f(y^0_{\sigma(0)},\dots ,y^{n-1}_{\sigma({n-1})},x_{\sigma(n)}).$$We have $\psi_f=\min_{\sigma\in Sym(n)} \{f_\sigma\}$.
    Let $(b_g)_{g\in G_{n+1,p}}$ be a witness that $\psi_f$ encodes $G_{n+1,p}$ as a partite hypergraph with some $r<s$, which exists by Proposition \ref{IP_n iff encoding partite} and the assumption that $\psi_f$ has $IP_n$. Fix a linear ordering of $Sym(n)$ and color the edges of $G_{n+1,p}$ according to the first $\sigma$ such that $f_\sigma(b_{g_0},\dots,b_{g_n})\leq r$ whenever $\psi_f(b_{g_0},\dots,b_{g_n})\leq r$. Let $H$ be any finite ordered $(n+1)$-partite $(n+1)$-uniform hypergraph. Since $\age(G_{n+1,p})$ has ERP, we can find a monochromatic isomorphic copy of $H$ inside $G_{n+1,p}$. This implies that $f_\sigma$ encodes $H$ as a partite hypergraph with the same $r<s$ as above (where $\sigma$ is the color of the edges of this monochromatic copy of $H$). Since each finite $(n+1)$-partite $(n+1)$-uniform hypergraph is encoded by some $f_\sigma$, by compactness there is $f_\sigma$ encoding $G_{n+1,p}$ as a partite hypergraph. Thus, $f_\sigma$ has $IP_n$ which, by the previous paragraph, implies that $f(y_0,\dots,y_{n-1},x)$ has $IP_n$.
        %As in Proposition \ref{IP_n iff coding nonpartite}, to ease the notation, we will refer to each one of the formulas $f(y^0_{\sigma(0)},\dots ,y^{n-1}_{\sigma({n-1})},x_{\sigma(n)})$ simply as $f_\sigma$. 
        %Let $(b_g)_{g\in G_{n+1,p}}$ be a witness that $\psi_f$ encodes $G_{n+1,p}$, which exists by Proposition \ref{IP_n iff encoding partite}. We write $b_g=(a^0_g,\dots,a^n_g)$. Fix a linear ordering of $Sym(n)$ and color the edges of $G_{n+1,p}$ according to the first $\sigma$ such that $f_\sigma\leq r$ whenever $\psi_f(b_{g_0},\dots,b_{g_n})\leq r$. Let $H$ be any finite $(n+1)$-partite $(n+1)$-uniform hypergraph. Since $\age(G_{n+1,p})$ is a Ramsey class, we can find a monochromatic isomorphic copy of $H$ inside $G_{n+1,p}$. This implies that $f_\sigma$ encodes $H$. %we can assume by adding dummy variables to f_\sigma that it is a formula in the same variables as \psi_f, we know that if the formula with dummy variables codes H then so does the one without the dummy variables.
\end{proof}

\section{$n$-dependence for hyperdefinable sets}\label{section: hyperdef n-dep}
%SHould I define explicitly generalized indisc in the context?

Let $T$ be a complete, first-order theory, and $\C \models T$ a monster model (i.e. $\kappa$-saturated and strongly $\kappa$-homogeneous for a strong limit cardinal $> |T|$). Let $E$ be a $\emptyset$-type-definable equivalence relation on a $\emptyset$-type-definable subset $X$ of $\C^\lambda$ (or a product of sorts), where $\lambda< \kappa$.

%To deal with $X/E$ in full generality (where $X\subseteq \C^\lambda$ is $\emptyset$-$\bigwedge$-definable), we need to extend the context by allowing CL-formulas defined only on a $\emptyset$-$\bigwedge$-definable subset $Z$ of a possibly infinite Cartesian $\C^\mu$. Then in the above definition of complete $\phi$-types and $S_\phi(M)$, we use $a \in Z$. In this extended context, CL-formulas cannot be added as predicates in the usual continuous logic formalism, but the results from  \cite{MR2657678} which we are going to use go through working over sufficiently saturated models in place of arbitrary models.

We recall the family $\mathcal{F}_{X/E}$ defined in \cite[Section 2]{10.1215/00294527-2022-0023}. $\mathcal{F}_{X/E}$ is the family  of all functions $f : X \times \mathfrak{C}^m \to \mathbb{R}$ which factor through $X/E\times\mathfrak{C}^m$ and can be extended to a continuous logic formula $\C^{\lambda} \times \C^m \to \mathbb{R}$ over $\emptyset$ (i.e. factors through a continuous function $S_{\lambda+m}(\emptyset))$, where $m$ ranges over $\omega$. 

%(Note that, by Tietze extension theorem, a function $f : X \times \C^m \to \mathbb{R}$ extends to a CL-formula over $\emptyset$ iff it factors through the type space $S_{X \times \C^m}(\emptyset)$ via a continuous function $S_{X \times \C^m}(\emptyset) \to \mathbb{R}$.)
%(We could call such functions {\em CL-formulas on $X \times \C^m$}.) 
%For $f\in \mathcal{F}_{X/E}$, a {\em complete $f$-type over $M$} is the function taking  $f(x,b)$ (for $b\in M^{|y|}$) to $f(a,b)$ for some fixed $a \in X$, and is denoted by $\tp_f(a/M)$. We get the space $S_f(M)$ of all complete $f$-types over $M$. A {\em complete $\mathcal{F}_{X/E}$-type over $M$} is 
%the tuple $(\tp_f(a/M))_{f \in  \mathcal{F}_{X/E}}$ for some $a \in X$, and we have the space  $S_{\mathcal{F}_{X/E}}(M)$ of all complete  $\mathcal{F}_{X/E}$-types over $M$.
%the union $\bigcup_{f \in \mathcal{F}_{X/E}} \tp_f(a/M)$ for some $a \in X$, and  $S_{\mathcal{F}_{X/E}}(M)$ is the space of all complete  $\mathcal{F}_{X/E}$-types over $M$.

Let $A \subset \C$ (be small). Recall that the complete types over $A$ of elements of $X/E$ can be defined as the $\aut(\C/A)$-orbits on $X/E$,  or the preimages of these orbits under the quotient map, or the partial types defining these preimages. The space of all such types is denoted by $S_{X/E}(A)$.

In this section we apply the results obtained in continuous logic to the context of hyperdefinable sets to obtain a counterpart to Theorem \ref{n-dep and collapsing}. 

 In %Proposition \ref{proposition: F_X/E separates points},
 \cite[Proposition 2.1]{10.1215/00294527-2022-0023},
 we showed that the family of functions $\mathcal{F}_{X/E}$ separates points in $S_{X/E\times \C^m}(\emptyset)$. Namely,
\begin{fact}\label{diff type diff f value}
For any $a_1=a'_1/E$, $a_2=a'_2/E$ in $X/E$ and $b_1,b_2\in \mathfrak{C}^m$ 
		$$\tp(a_1,b_1)\neq \tp(a_2,b_2)\iff (\exists f\in \mathcal{F}_{X/E})(f(a'_1,b_1)\neq f(a'_2,b_2)) $$
\end{fact}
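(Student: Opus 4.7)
The right-to-left direction is immediate: $\tp(a_1,b_1)=\tp(a_2,b_2)$ in $S_{X/E\times\C^m}(\emptyset)$ yields $\sigma\in\aut(\C)$ with $E(\sigma(a'_1),a'_2)$ and $\sigma(b_1)=b_2$. Any $f\in\mathcal{F}_{X/E}$ extends to a continuous-logic formula over $\emptyset$, hence is $\aut(\C)$-invariant, and factors through $X/E\times\C^m$, hence is $E$-invariant in the first coordinate on $X$; composing, $f(a'_1,b_1)=f(\sigma(a'_1),b_2)=f(a'_2,b_2)$.

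For the nontrivial direction, my plan is to realize $S_{X/E\times\C^m}(\emptyset)$ as a compact Hausdorff topological quotient of a closed subspace of $S_{\lambda+m}(\emptyset)$, apply Urysohn's lemma on the quotient, and then use Tietze's extension theorem to promote the separating function to a definable predicate on $\C^\lambda\times\C^m$. Concretely, let $\Omega\subseteq S_{\lambda+m}(\emptyset)$ be the closed subspace of types concentrated on $X\times\C^m$, and define $q_1(x,y)\sim q_2(x',y')$ to hold exactly when the partial type $q_1\cup q_2\cup\{E(x,x'),\,y=y'\}$ is consistent. The realization map descends to a continuous bijection $\Omega/\sim\,\to S_{X/E\times\C^m}(\emptyset)$, so it suffices to show $\Omega/\sim$ is compact Hausdorff.

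The key step is proving $\sim$ is a closed equivalence relation. Symmetry and reflexivity are clear; transitivity will follow from $\kappa$-saturation of $\C$, by fixing a realization of $q_2$ in $\C$, realizing $q_1$- and $q_3$-tuples $E$-connected to it, and applying transitivity of $E$. Closedness will come from continuous-logic compactness: a failure $q_1\not\sim q_2$ forces some finite subconditions of $q_1\cup q_2\cup\{E(x,x'),\,y=y'\}$ to be inconsistent, and this inconsistency persists on basic open neighborhoods of $q_1$ and $q_2$ in $\Omega\times\Omega$. With $\sim$ closed, the quotient $\Omega/\sim$ is compact Hausdorff by the standard fact about closed equivalence relations on compact spaces.

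Now if $\tp(a_1,b_1)\neq\tp(a_2,b_2)$, their images in $\Omega/\sim$ are distinct, so Urysohn's lemma supplies a continuous $F:\Omega/\sim\,\to[0,1]$ separating them. Its pullback $\hat F$ to $\Omega$ is continuous and $\sim$-invariant, and by Tietze's extension theorem (applied in the compact Hausdorff space $S_{\lambda+m}(\emptyset)$) it extends to a continuous $\tilde F:S_{\lambda+m}(\emptyset)\to[0,1]$, which corresponds to a definable predicate $\tilde f:\C^\lambda\times\C^m\to[0,1]$ over $\emptyset$. By construction $\tilde f\upharpoonright_{X\times\C^m}$ factors through $X/E\times\C^m$, so it lies in $\mathcal{F}_{X/E}$ and separates $(a'_1,b_1)$ from $(a'_2,b_2)$. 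The main obstacle I foresee is verifying closedness of $\sim$, which isolates the genuine content of the statement; everything else is routine compact-Hausdorff topology plus Tietze.
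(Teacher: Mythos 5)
The paper records this statement as a Fact, citing \cite[Proposition 2.1]{10.1215/00294527-2022-0023} with no in-text argument, so there is no internal proof to compare against; I am evaluating your argument on its own terms, and it is correct. The easy direction correctly combines $\aut(\C)$-invariance of the extending $\emptyset$-definable predicate with $E$-invariance from the factoring condition. For the hard direction the quotient construction is right: $\Omega$ is closed in $S_{\lambda+m}(\emptyset)$ because $X$ is type-definable over $\emptyset$; transitivity of $\sim$ goes through via saturation and strong homogeneity exactly as you outline; and closedness of $\sim$ does follow from compactness --- note this is ordinary first-order compactness, since $T$ is a first-order theory in this section, and you should extract a \emph{single} $\varphi_1\in q_1$, a single $\varphi_2\in q_2$, and a finite $E_0\subseteq E$ witnessing inconsistency (possible because complete types are closed under conjunction), so that the clopen box $[\varphi_1]\times[\varphi_2]$ (intersected with $\Omega\times\Omega$) is the required open neighborhood missing $\sim$. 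With $\sim$ closed, $\Omega/\sim\,\cong S_{X/E\times\C^m}(\emptyset)$ is compact Hausdorff, Urysohn separates the two classes, and Tietze applied to the closed subspace $\Omega$ of the compact Hausdorff space $S_{\lambda+m}(\emptyset)$ produces a continuous function on $S_{\lambda+m}(\emptyset)$, i.e.\ a definable predicate over $\emptyset$, whose restriction to $X\times\C^m$ is $\sim$-invariant, hence factors through $X/E\times\C^m$, lies in $\mathcal{F}_{X/E}$, and separates $(a_1',b_1)$ from $(a_2',b_2)$. This quotient--Urysohn--Tietze route is the standard one for such separation statements and is in the spirit of the construction of $\mathcal{F}_{X/E}$ in the cited source.
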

This allows us to work with elements of $X/E$ as real elements if we restrict ourselves to functions from the family $\mathcal{F}_{X/E}$. Hence, we introduce the following notation:

\begin{notation} Let $\Delta$ be a set of (continuous) formulas in variables $(x_i)_{i<\lambda}$ all from the same product of sorts. We say that a sequence $(a_i)_{i\in \I}$ of elements from the appropriate product of sorts is $\I$-indiscernible with respect to $\Delta$ if for any tuples $i_1,\dots,i_n,j_1,\dots,j_n\in \I$ we have that %$\tp^\Delta(\overline{a}_{i_1}, \dots, \overline{a}_{i_n})$ is not trivial and
    $$\qftp(i_1,\dots,i_n)=\qftp(j_1,\dots,j_n) \implies \tp^\Delta(a_{i_1}, \dots, a_{i_n})=\tp^\Delta(a_{j_1}, \dots, a_{j_n}),$$
    where the tuples $a_i$ are substituted for the variables of the formulas from $\Delta$.
\end{notation}

We define generalised indiscernible sequences of hyperimaginaries exactly as we did in Definition \ref{defin: gen. indisc.}.
\begin{defin}%[Generalized indisc]
    %Let $\mathcal{L}'$ be a first order language and $\mathcal{I}$ an $\mathcal{L}'$-structure. 
    Let $\II=(a_i: i\in \I)$ be an $\I$-indexed sequence of hyperimaginaries (maybe of different sorts), and let $A\subset \C$ be a small set of parameters.  We say that $\II$ is an $\I$-\emph{indexed indiscernible sequence over} $A$ if for all $n\in\omega$ and all sequences $i_1,\dots,i_n,j_1,\dots,j_n$ from $\I$ we have that 
    $$\qftp(i_1,\dots,i_n)=\qftp(j_1,\dots,j_n) \implies \tp({a}_{i_1}, \dots, {a}_{i_n}/A)=\tp({a}_{j_1}, \dots, {a}_{j_n}/A).$$
\end{defin}

By Fact \ref{diff type diff f value}, a sequence of hyperimaginaries $(a_i/E)_{i\in \I}$ is $\I$-indiscernible if and only if the sequence $(a_i)_{i\in \I}$ is $\I$-indiscernible with respect to the family of functions $f: X^n\to \R$ that factor through $(X/E)^n$ and can be extended to a continuous formula $f:\C^{n\lambda}\to \R$ over $\emptyset$ where $n$ ranges over $\omega$.

Next, we define the $n$-independence property for hyperdefinable sets. 

\begin{defin}\label{Definition: hyperim IP_n}
A hyperdefinable set $X/E$ has the \emph{$n$-independence property}, $IP_n$ for short, if for some $m<\omega$ there exist two distinct complete types $p,q\in S_{X/E\times \C^m}(\emptyset)$ and a sequence $(a_{0,i},\dots, a_{n-1,i})_{i< \omega}$ such that for every finite $w\subset \omega^n$ there exists $b_w\in X/E$ satisfying $$\tp(b_w, a_{0,i_0},\dots, a_{n-1,i_{n-1}}  )=p \iff (i_0,\dots, i_{n-1})\in w$$ $$\tp(b_w, a_{0,i_0},\dots, a_{n-1,i_{n-1}}  )=q \iff (i_0,\dots, i_{n-1})\notin w. $$ 
\end{defin}

Note that $1$-dependent hyperdefinable sets are exactly the hyperdefinable sets with $NIP$ (see the definition of hyperdefinable set with NIP in \cite[Remark 2.3]{HaskelPilllay}) by %Lemma \ref{lemma: equivalences NIP}.
\cite[Lemma 2.12]{10.1215/00294527-2022-0023}.

We can easily modify our definition of $n$-independence to suit functions in $\mathcal{F}_{X/E}$.

\begin{defin}\label{defin: n-dep function/E}
     We say that $f(x,y_0,\dots,y_{n-1})\in \mathcal{F}_{X/E}$ \emph{ has the $n$-independence property}, $IP_n$ for short, if there exist $r<s\in\mathbb{R}$ and a sequence $(a_{0,i},\dots, a_{n-1,i})_{i< \omega}$ from anywhere such that for every finite $w\subseteq \omega^n$ there exists $b_w\in X$ satisfying 
     \begin{align*}
         f(b_w, a_{0,i_0},\dots, a_{n-1,i_{n-1}}  )\leq r &\iff (i_0,\dots, i_{n-1})\in w\\
         &and\\
         f(b_w, a_{0,i_0},\dots, a_{n-1,i_{n-1}}  )\geq s &\iff (i_0,\dots, i_{n-1})\notin w.
     \end{align*} %We say that $X/E$ is \emph{$n$-dependent}, or $NIP_n$, if no formula in $\mathcal{F}_{X/E}$ has $IP_n$. 
\end{defin}

%%%%%%%%%%%%%%%%%%%%%%%%%%%%%%%%%%%%%%%%%%%%%
%%%Adrian: Should I write the definition explicitly?
%%%%%%%%%%%%%%%%%%%%%%%%%%%%%%%%%%%%%%%%%%%%%%%%

Similarly, we can define what it means for a function in $\mathcal{F}_{X/E}$ to encode ($n$-partite) $n$-uniform hypergraphs.
\begin{defin}
    We say that  $f(x,y_1,\dots,y_{n-1})\in \mathcal{F}_{X/E}$ \emph{encodes a $n$-partite $n$-uniform hypergraph} $(G,R,P_0,\dots,P_{n-1})$ if there are a $G$-indexed sequence $(a_g)_{g\in G}$ with $a_g\in X$ for every $g\in P_0(G)$ and $r<s\in\mathbb{R}$ satisfying 
    \begin{align*}
    f( a_{g_0},\dots, a_{g_{n-1}}  )\leq r &\iff  R(g_0,\dots, g_{n-1})\\
    &and\\
    f(a_{g_0},\dots, a_{g_{n-1}}  )\geq s &\iff \neg R(g_0,\dots, g_{n-1})
    \end{align*}
    for all $g_0,\dots,g_{n-1}\in P_0\times\cdots\times P_{n-1}$.
    We say that $f(x_0,\dots,x_{n-1})\in \mathcal{F}_{X/E}$ \emph{ encodes $n$-partite $n$-uniform hypergraphs} if there exist $r<s\in\mathbb{R}$ such that $f(x,y_1,\dots,y_{n-1})$ encodes every finite $n$-partite $n$-uniform hypergraph using the same $r$ and $s$.
\end{defin}

The proof of the following fact is exactly as in the case of a general continuous formula.

\begin{prop} \label{Hyperim: IP_n iff encoding partite} Let $f(x,y_0,\dots,y_{n-1})\in \mathcal{F}_{X/E}$. Then, the following are equivalent:
\begin{enumerate}
    \item $f$ has $IP_n$.
    \item $f$ encodes $(n+1)$-partite $(n+1)$-uniform hypergraphs.
    \item $f$ encodes $G_{n+1,p}$ as a partite hypergraph.
    \item $f$ encodes $G_{n+1,p}$ as a partite hypergraph witnessed by a $G_{n+1,p}$-indiscernible sequence.
\end{enumerate}
\end{prop}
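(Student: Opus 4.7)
The plan is to mirror the proof of Proposition \ref{IP_n iff encoding partite}, adapting it so that all first-coordinate witnesses lie in the type-definable set $X$. The argument decomposes into the same four implications $(1)\implies(2)\implies(3)\implies(4)\implies(1)$.

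For $(1)\implies(2)$, given a finite $(n+1)$-partite $(n+1)$-uniform hypergraph $(G,R,P_0,\dots,P_n)$, I would assume without loss of generality that $|P_0|=\cdots=|P_n|=k$, identify $P_m(G)$ with $\{(m,i):i<k\}$, and for each $g\in P_0(G)$ set $w_g:=\{(g_1,\dots,g_n):R(g,g_1,\dots,g_n)\}$. The elements $b_{w_g}\in X$ supplied by Definition \ref{defin: n-dep function/E} serve as the $P_0$-indexed part of the witnessing sequence, while the rest is inherited from the $IP_n$ witness. Step $(2)\implies(3)$ is a standard compactness argument: each finite substructure of $G_{n+1,p}$ is encodable, so by compactness over types of finite tuples one obtains a $G_{n+1,p}$-indexed witness.

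For $(3)\implies(4)$, I would apply Theorem \ref{CMP iff Ramsey} together with Fact \ref{fact: ordered hypergraphs are ramsey} to replace the given witness by a $G_{n+1,p}$-indiscernible sequence $(b_g)_{g\in G_{n+1,p}}$ locally based on it. Applying local basing to the formulas $f(x_{g_0},\dots,x_{g_{n-1}})$ for $(g_0,\dots,g_{n-1})\in P_0\times\cdots\times P_n$ shows that $(b_g)_g$ still encodes $G_{n+1,p}$ with the same $r<s$. The main subtle point here, which does not arise in the purely continuous formula case, is that one must verify $b_g\in X$ for every $g\in P_0(G_{n+1,p})$. Since $X$ is $\emptyset$-type-definable, this reduces to checking each condition $\varphi(x)=0$ from its defining partial type: the original $a_g$ (for $g\in P_0$) satisfy $\varphi(a_g)=0$, and local basing yields $|\varphi(b_g)-\varphi(a_{g'})|\leq\varepsilon$ for arbitrary $\varepsilon>0$ and some $g'\in P_0(G_{n+1,p})$ with the appropriate quantifier-free type, forcing $\varphi(b_g)=0$.

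For $(4)\implies(1)$, I would write $G_{n+1,p}=\{(j,m):j\leq n,m<\omega\}$. Randomness of $G_{n+1,p}$ together with $G_{n+1,p}$-indiscernibility yield, for each finite $w\subseteq\omega^n$, an index $g_w\in P_0(G_{n+1,p})$ such that the corresponding element $b_w:=a_{g_w}\in X$ satisfies $f(b_w,a_{(1,i_1)},\dots,a_{(n,i_n)})\leq r$ iff $(i_1,\dots,i_n)\in w$ and $\geq s$ otherwise; a routine compactness step extends this from finite $w$ to the full $IP_n$ configuration. The principal obstacle I anticipate is exactly the $X$-membership verification in $(3)\implies(4)$, since the continuous modeling property is phrased in terms of locally based sequences in $\mathfrak{C}$ with no a priori guarantee that the resulting tuples land in a prescribed type-definable subset; everything else is a mechanical transcription of the proof of Proposition \ref{IP_n iff encoding partite}.
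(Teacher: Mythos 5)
Your proposal is correct and mirrors the paper's approach, which simply declares the proof identical to that of Proposition \ref{IP_n iff encoding partite}. You helpfully make explicit the one point the paper leaves implicit: that in $(3)\implies(4)$ the $P_0$-indexed coordinates of the locally based $G_{n+1,p}$-indiscernible sequence remain in $X$, which indeed follows from the $\emptyset$-type-definability of $X$ together with the locally-based property applied to each condition in the partial type defining $X$.
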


The following lemma allows us to understand $IP_n$ of a hyperdefinable set $X/E$ through the family of functions $\mathcal{F}_{X/E}$.

\begin{lema}\label{IP_n hyperdef. set iif formulas}
    $X/E$ has $IP_n$ if and only if some $f(x,y_0,\dots,y_{n-1})\in \mathcal{F}_{X/E}$ has $IP_n$.
\end{lema}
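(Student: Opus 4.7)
The plan is to translate between the hyperdefinable formulation of $IP_n$ and the formulation for functions in $\mathcal{F}_{X/E}$, using Fact \ref{diff type diff f value} as the bridge and Proposition \ref{Hyperim: IP_n iff encoding partite} to extract a $G_{n+1,p}$-indiscernible encoding. For the forward direction, suppose $X/E$ has $IP_n$, witnessed by distinct types $p,q\in S_{X/E\times\C^m}(\emptyset)$, parameters $(a_{0,i},\dots,a_{n-1,i})_{i<\omega}$, and $b_w\in X/E$ for each finite $w\subset\omega^n$. By Fact \ref{diff type diff f value} we can pick $f\in\mathcal{F}_{X/E}$ in variables $(x,y_0,\dots,y_{n-1})$ of the correct arities with $f(p)\neq f(q)$; replacing $f$ by $1-f$ if necessary, we may assume $\alpha:=f(p)<f(q)=:\beta$, and then the same data witness $IP_n$ for $f$ with $r:=\alpha$ and $s:=\beta$ in the sense of Definition \ref{defin: n-dep function/E}.

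For the backward direction, suppose $f(x,y_0,\dots,y_{n-1})\in\mathcal{F}_{X/E}$ has $IP_n$. By Proposition \ref{Hyperim: IP_n iff encoding partite}, $f$ encodes $G_{n+1,p}$ as a partite hypergraph, witnessed by a $G_{n+1,p}$-indiscernible sequence $(a_g)_{g\in G_{n+1,p}}$ with $a_g\in X$ for $g\in P_0$, and constants $r<s$. By indiscernibility, the type $\tp(a_{g_0},\dots,a_{g_n})$ for $(g_0,\dots,g_n)\in P_0\times\cdots\times P_n$ depends only on whether $R(g_0,\dots,g_n)$ holds; let $p$ and $q$ denote these two types. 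They lie in $S_{X/E\times\C^m}(\emptyset)$, where $m$ is the combined arity of $y_0,\dots,y_{n-1}$, and are distinct because $f$ takes value $\leq r$ on realisations of $p$ and $\geq s$ on realisations of $q$.

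To produce the $IP_n$-witnesses for $X/E$, fix an enumeration $\{g^k_i:i<\omega\}$ of $P_k(G_{n+1,p})$ for each $k\geq 1$ and set $a_{k,i}:=a_{g^{k+1}_i}$ for $k<n$, $i<\omega$. For each finite $w\subset\omega^n$, let $\Sigma_w(x)$ be the partial type asserting $\tp(x,a_{0,i_0},\dots,a_{n-1,i_{n-1}})=p$ for $(i_0,\dots,i_{n-1})\in w$ and $=q$ otherwise, over all $(i_0,\dots,i_{n-1})\in\omega^n$. By $\kappa$-saturation of $\C$ it suffices to check finite satisfiability: given finite $F\subset\omega^n$, the axiomatisation of $\Th(G_{n+1,p})$ applied with $j=0$ to the disjoint sets $A_0=\{(g^1_{i_0},\dots,g^n_{i_{n-1}}):(i_0,\dots,i_{n-1})\in w\cap F\}$ and $A_1=\{(g^1_{i_0},\dots,g^n_{i_{n-1}}):(i_0,\dots,i_{n-1})\in F\setminus w\}$ yields $g\in P_0$ with $R(g,\cdot)$ holding exactly on $A_0$, and $G_{n+1,p}$-indiscernibility then ensures that $a_g/E$ realises $\Sigma_w\restriction F$. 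Any realisation $b_w\in X/E$ of $\Sigma_w$ in the monster model therefore witnesses $IP_n$ for $X/E$. The main technical point is this finite-satisfiability step, which hinges on combining the precise axiomatisation of the Fraïssé limit $G_{n+1,p}$ with $G_{n+1,p}$-indiscernibility to exhibit, for each finite subset of $\omega^n$, an element of $P_0$ producing the prescribed pattern of $p$'s and $q$'s.
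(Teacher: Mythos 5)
Your proof is correct and follows essentially the same route as the paper: the forward direction uses Fact \ref{diff type diff f value} to extract an $f\in\mathcal{F}_{X/E}$ separating $p$ from $q$, and the backward direction passes via Proposition \ref{Hyperim: IP_n iff encoding partite} to a $G_{n+1,p}$-indiscernible encoding and then uses randomness of $G_{n+1,p}$ plus compactness/saturation. Your version is somewhat more explicit than the paper's about the final compactness step (spelling out the partial type $\Sigma_w$ and its finite satisfiability), and you should formally write $\tp(a_{g_0}/E,a_{g_1},\dots,a_{g_n})$ rather than $\tp(a_{g_0},\dots,a_{g_n})$ when defining $p$ and $q$ so that they land in $S_{X/E\times\C^m}(\emptyset)$, but the substance is the same.
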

\begin{proof}
    Assume that $X/E$ has $IP_n$. Take witnesses $p$ and $q$ from Definition \ref{Definition: hyperim IP_n}. Then, by Fact \ref{diff type diff f value}, there exists $f(x,y_0,\dots,y_{n-1})\in\mathcal{F}_{X/E}$ and $r<s$ such that $f(x,y_0,\dots,y_{n-1})\leq r \in p $ and $f(x,y_0,\dots,y_{n-1})\geq s \in q$. The elements witnessing $IP_n$ for $X/E$ also  witness that $f(x,y_0,\dots,y_{n-1})$ has $IP_n$.

 Assume now that some $f(x,y_0,\dots,y_{n-1})\in\mathcal{F}_{X/E}$ has $IP_n$. By Proposition \ref{Hyperim: IP_n iff encoding partite}, the function $f$ encodes $G_{n+1,p}$ as a partite hypergraph witnessed by a $G_{n+1,p}$-indiscernible sequence $(a_g)_{g\in G_{n+1,p}}$ and some $r<s\in \R$. We write $$G_{n+1,p}=\{ (j,m): j\leq n; m<\omega \}.$$ Then, by randomness of $G_{n+1,p}$, for any  finite disjoint $s_0,s_1\subseteq \omega^n$ we can find $b_{s_0,s_1}\in X$ (in fact, we can choose it to be some $a_g$ for $g\in P_0(G_{n+1,p})$) such that
    \begin{align*}
        (i_1,\dots, i_{n})\in s_0\implies f(b_{s_0,s_1}, a_{1,i_1},\dots, a_{n,i_{n}}  )\leq r,\\
       (i_1,\dots, i_{n})\in s_1 \implies f(b_{s_0,s_1}, a_{1,i_1},\dots, a_{n,i_{n}}  )\geq s .
    \end{align*}
    Moreover, by $G_{n+1,p}$-indiscernibility, there exist two distinct complete types $p,q\in S_{X/E\times \C^m}(\emptyset)$ such that %for any finite disjoint $s_0,s_1 \subseteq \omega^n$, we can find $b_{s_0,s_1}\in X/E$ satisfying
    \begin{align*}
        (i_1,\dots, i_{n})\in s_0 \implies \tp(b_{s_0,s_1}/E, a_{1,i_1},\dots, a_{n,i_{n}}  )=p; \\
        (i_1,\dots, i_{n})\in s_1 \implies \tp(b_{s_0,s_1}/E, a_{1,i_1},\dots, a_{n,i_{n}}  )=q.
    \end{align*}

\begin{comment}
    I modified the proof, this was the original. the problem is that we cannot take two finite disjoint subsets because we are working with iff's, we need to cover the entire \omega^n with every choice.

    Assume now that some formula $f(x,y_0,\dots,y_{n-1})\in\mathcal{F}_{X/E}$ has $IP_n$. By Proposition \ref{Hyperim: IP_n iff encoding partite}, the formula $f$ codes $G_{n+1,p}$ witnessed by a $G_{n+1,p}$-indiscernible sequence $(a_g)_{g\in G_{n+1,p}}$. We write $$G_{n+1,p}=\{ (j,m): j\leq n; m<\omega \}.$$ Then, by randomness of $G_{n+1,p}$, for any disjoint finite $s_0,s_1 \subseteq \omega^n$ we can find $b_{s_0,s_1}\in X/E$ such that
    \begin{align*}
        f(b_{s_0,s_1}, a_{1,i_1},\dots, a_{n,i_{n-1}}  )\leq r \iff (i_1,\dots, i_{n})\in s_0; \\
        f(b_{s_0,s_1}, a_{1,i_1},\dots, a_{n,i_{n-1}}  )\geq s \iff (i_1,\dots, i_{n})\in s_1.
    \end{align*}
    Moreover, by $G_{n+1,p}$-indiscernibility, there exist two distinct complete types $p,q\in S_{X/E\times \C^m}(\emptyset)$ such that %for any finite disjoint $s_0,s_1 \subseteq \omega^n$, we can find $b_{s_0,s_1}\in X/E$ satisfying
    \begin{align*}
        \tp(b_{s_0,s_1}, a_{1,i_1},\dots, a_{n,i_{n}}  )=p \iff (i_1,\dots, i_{n})\in s_0; \\
        \tp(b_{s_0,s_1}, a_{1,i_1},\dots, a_{n,i_{n}}  )=q \iff (i_1,\dots, i_{n})\in s_1.
    \end{align*}
\end{comment}
    By compactness, it follows that $X/E$ has $IP_n$.
    %One may prove the right to left implication without using Proposition \ref{Hyperim: IP_n iff encoding partite} by instead showing that the sequence $(a_{0,i},\dots, a_{n-1,i})_{i< \omega}$ in the definition of $IP_n$ for $f$ might be taken $\mL^n_{op}$-indiscernible.
    %%%Adrian: This is done by defining IP_n as 'certain types are satisfiable' instead of saying that 'there exists b_w such that ...' and the using $\mL_op$-indiscernibility + compactness
\end{proof}

 \begin{notation}\label{notation PSI}
     For each $f(x,y_0,\dots, y_n)\in \mathcal{F}_{X/E}$, let $f'(y_0,\dots, y_n,x):=f(x,y_0,\dots, y_n)$. We denote by $\Psi^{n+1}_{ \mathcal{F}_{X/E} }$ the set containing all functions $\psi_{f'}$ for $f(x,y_0,\dots, y_n)\in \mathcal{F}_{X/E}$, where $\psi_{f'}$ is constructed as in Notation \ref{notation psi_f}. $\Psi_{ \mathcal{F}_{X/E} }$ is the union of all $\Psi^{n+1}_{ \mathcal{F}_{X/E} }$ for $n<\omega$.
 \end{notation}

 The next definition is the natural counterpart of Definition \ref{def: encoding nonpartite} for functions of the family $\Psi_{ \mathcal{F}_{X/E} }$.

% Let $f(x,y_0,\dots, y_n)\in \mathcal{F}_{X/E}$. We denote by $\psi_f$ the formula constructed using $f$ as in Proposition \ref{IP_n iff coding nonpartite}. Then, the set $\Psi^{n+1}_{ \mathcal{F}_{X/E} }$ is the set containing all formulas $\psi_f$ for $f(x,y_0,\dots, y_n)\in \mathcal{F}_{X/E}$. The next definition is the natural counterpart of Definition \ref{def: encoding nonpartite} for formulas of the family $\Psi_{ \mathcal{F}_{X/E} }$.\color{black}

\begin{defin}
    We say that $\psi(x_0,\dots,x_{n-1})\in \Psi_{ \mathcal{F}_{X/E} } $ \emph{encodes an $n$-uniform hypergraph $(G,R)$} if there is a $G$-indexed sequence $(a_g)_{g\in G}$ in $\C^m\times X$ (for some fixed $m<\omega$)  and $r<s\in\mathbb{R}$ satisfying 
    \begin{align*}
        \psi( a_{g_0},\dots, a_{g_{n-1}}  )\leq r &\iff  R(g_0,\dots, g_{n-1})\\
        &and\\
        \psi(a_{g_0},\dots, a_{g_{n-1}}  )\geq s &\iff \neg R(g_0,\dots, g_{n-1})
    \end{align*}
    for all 
    %pairwise distinct
    $g_0,\dots,g_{n-1}\in G$.
    We say that $\psi(x_0,\dots,x_{n-1})$ \emph{encodes $n$-uniform hypergraphs} if there exist $r<s\in\mathbb{R}$ such that $\psi(x_0,\dots,x_{n-1})$ encodes every finite $n$-uniform hypergraph using the same $r$ and $s$.
\end{defin}

As in the general continuous case, we have an equivalence between $IP_n$ of the hyperdefinable set $X/E$ and the existence of some function coding $(n+1)$-uniform hypergraphs.

\begin{prop}\label{Hyperim: IP_n iff coding nonpartite} %Let $T$ be a complete continuous logic theory. 
The following are equivalent:
\begin{enumerate}
    \item $X/E$ has $IP_n$.
    \item There is a function in $\Psi_{\mathcal{F}_{X/E}}$ encoding $(n+1)$-uniform hypergraphs.
    \item There is a function in $\Psi_{\mathcal{F}_{X/E}}$ encoding $G_{n+1}$ as a hypergraph.
    \item There is a function in $\Psi_{\mathcal{F}_{X/E}}$ encoding $G_{n+1}$ as a hypergraph witnessed by a $G_{n+1}$-indiscernible sequence.
\end{enumerate}
\end{prop}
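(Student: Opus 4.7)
The plan is to imitate the proof of Proposition \ref{IP_n iff coding nonpartite}, using Lemma \ref{IP_n hyperdef. set iif formulas} to bridge between $IP_n$ of the hyperdefinable set $X/E$ and $IP_n$ of functions in $\mathcal{F}_{X/E}$, and being careful that every constructed witness lies in a product of the form $\C^m\times X$ as required by the definition of encoding for $\psi\in\Psi_{\mathcal{F}_{X/E}}$.

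For $(1)\Rightarrow(2)$, I first apply Lemma \ref{IP_n hyperdef. set iif formulas} to get $f(x,y_0,\dots,y_{n-1})\in\mathcal{F}_{X/E}$ with $IP_n$. By Proposition \ref{Hyperim: IP_n iff encoding partite}, $f$ encodes $G_{n+1,p}$ as a partite hypergraph, witnessed by a $G_{n+1,p}$-indiscernible sequence $(a_g)_{g\in G_{n+1,p}}$, where $a_g\in X$ for $g$ in the part associated to the $x$-variable of $f$. Setting $f'(y_0,\dots,y_{n-1},x):=f(x,y_0,\dots,y_{n-1})$ (so that the $x$-slot moves to $P_n$) and running the construction from the proof of Proposition \ref{IP_n iff coding nonpartite} $(1)\Rightarrow(2)$ produces witnessing tuples $b_{\tilde h_i}=(a_{g^0_i},\dots,a_{g^{n-1}_i},a_{g^n_{c(i)}})$ showing that $\psi_{f'}$ encodes every finite $(n+1)$-uniform hypergraph. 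Since $g^n_{c(i)}\in P_n$, the distinguished last coordinate $a_{g^n_{c(i)}}$ sits in $X$, so the whole sequence lies in $\C^m\times X$ as required.

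The step $(2)\Rightarrow(3)$ is standard compactness. For $(3)\Rightarrow(4)$ I apply Theorem \ref{CMP iff Ramsey} together with Fact \ref{fact: ordered hypergraphs are ramsey} to extract a $G_{n+1}$-indiscernible sequence $(b_g)_{g\in G_{n+1}}$ locally based on the given encoding sequence; membership in the type-definable sort $\C^m\times X$ is preserved, and the strict encoding inequalities $\leq r$ and $\geq s$ survive after shrinking the gap by any $\varepsilon<(s-r)/2$, since local basing controls formula values up to $\varepsilon$.

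For $(4)\Rightarrow(1)$, suppose $\psi=\psi_{f'}\in\Psi^{n+1}_{\mathcal{F}_{X/E}}$ encodes $G_{n+1}$ as a hypergraph. Since every finite ordered $(n+1)$-partite $(n+1)$-uniform hypergraph embeds into $G_{n+1}$, $\psi$ encodes each of them, and by compactness $\psi$ encodes $G_{n+1,p}$ as a partite hypergraph. I then mimic the proof of Lemma \ref{IP_n f and psi} $(2)\Rightarrow(1)$: writing $\psi=\min_\sigma f'_\sigma$, I color each edge of $G_{n+1,p}$ by the first $\sigma$ realising the minimum below $r$; by the ERP of $\age(G_{n+1,p})$ and compactness, some fixed $f'_\sigma$ itself encodes $G_{n+1,p}$ as a partite hypergraph; by Proposition \ref{Hyperim: IP_n iff encoding partite} and the $\mathcal{F}_{X/E}$-analog of Corollary \ref{Corolary: preservation under permutation of variables}, $f$ has $IP_n$; finally, Lemma \ref{IP_n hyperdef. set iif formulas} yields $IP_n$ for $X/E$. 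The delicate point here is the sort bookkeeping in the permutation argument, but this is built into the design of $\psi_{f'}$: every grouped tuple of variables contains exactly one coordinate of sort $X$, so for any $\sigma$ the $x$-slot of $f$ always receives a value in $X$, making the permuted function a bona fide member of (a permuted version of) $\mathcal{F}_{X/E}$.
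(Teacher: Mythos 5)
Your proof is correct and follows essentially the same route as the paper: bridge $IP_n$ of $X/E$ to $IP_n$ of some $f\in\mathcal{F}_{X/E}$ via Lemma \ref{IP_n hyperdef. set iif formulas}, reuse the $\psi_{f'}$-construction from Proposition \ref{IP_n iff coding nonpartite} for $(1)\Rightarrow(2)$, compactness and Theorem \ref{CMP iff Ramsey} for $(2)\Rightarrow(3)\Rightarrow(4)$, and a Ramsey colouring argument in the style of Lemma \ref{IP_n f and psi} for $(4)\Rightarrow(1)$. Two small remarks: in $(3)\Rightarrow(4)$ the encoding is preserved with the \emph{same} $r<s$ (no gap-shrinking needed), since by Lemma \ref{EMTP, based on and Ind(I,L)} locally-based-on means $\emtp$-containment and the conditions $\max(\psi-r,0)=0$ on $R$-tuples and $\max(s-\psi,0)=0$ on non-$R$-tuples sit in $\emtp$ of the original sequence; and in $(4)\Rightarrow(1)$ the paper colours $G_{n+1}$ directly rather than first passing to a partite encoding and colouring $G_{n+1,p}$, but the two variants are interchangeable and your explicit remark that each grouped variable-tuple carries exactly one $X$-coordinate (so the $x$-slot of $f$ always receives an element of $X$, regardless of $\sigma$) is exactly the point the paper also relies on.
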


\begin{proof}
    $(1)\implies (2)$. By Lemma \ref{IP_n hyperdef. set iif formulas} and Proposition \ref{Hyperim: IP_n iff coding nonpartite}, there exists a function $f(x,y_0,\dots,y_{n-1})\in\mathcal{F}_{X/E}$ encoding $G_{n+1,p}$ as a partite hypergraph. Consider the function $f'(y_0,\dots,y_{n-1},x):=f(x,y_0,\dots,y_{n-1})$, following the proof of Proposition \ref{IP_n iff coding nonpartite}, we see that $\psi_{f'}\in \Psi_{\mathcal{F}_{X/E}}$ encodes $G_{n+1}$. 
    
    $(2)\implies (3)$ follows by compactness.
    
    $(3) \implies (4)$. Follows from the proof of $(3)\implies (4)$ of Proposition \ref{IP_n iff coding nonpartite}.%\ref{IP_n f and psi}.
    %Let $\II=(a_g)_{g\in G_{n+1}}$ witness that $\psi_f(x_0,x_1,\dots,x_n)$ encodes $G_{n+1}$ as a hypergraph. Since $(n+1)$-uniform hypergraphs are a Ramsey class, there exists a sequence $(b_g)_{g\in G_{n+1}}$ locally based on $\II$. It is easy to see that $(b_g)_{g\in G_{n+1}}$ also witnesses that $\psi_f(x_0,x_1,\dots,x_n)$ encodes $G_{n+1}$ as a hypergraph.
    
    %follows from the fact that we can extract a $G_{n+1}$-indiscernible sequence locally based in the $G_{n+1}$-indexed sequence witnessing that the formula $\psi_f$ codes $G_{n+1}$.

    $(4)\implies (1)$ We slightly modify the proof of Lemma \ref{IP_n f and psi}. Let $(b_g)_{g\in G_{n+1}}$ be a witness that $\psi_f$ encodes $G_{n+1}$ as a hypergraph. Note that for every $g\in G_{n+1}$, $b_g=(a^0_g,\dots, a^n_g)$ with $a^n_g\in X$. Fix a linear ordering of $Sym(n)$ and color the edges of $G_{n+1}$ according to the first $\sigma$ such that $f(a^0_{g_{\sigma(0)}},\dots,a^n_{g_{\sigma(n)}})\leq r$ whenever $\psi_f(b_{g_0},\dots,b_{g_n})\leq r$. Let $H=\{ h^i_m: i\leq n, m\leq k \}$ be any finite ordered $(n+1)$-partite $(n+1)$-uniform hypergraph. Since $\age(G_{n+1})$ has ERP, we can find a monochromatic isomorphic copy of $H$ inside $G_{n+1}$ (as a non partite hypergraph). This implies that the function $f(y_0,\dots,y_{n-1},x)$ encodes $H$ as a partite hypergraph, witnessed by the elements $\{ a^i_{h^{\sigma(i)}_m}: i\leq n, m\leq k \},$ where $\sigma$ is the color of the monochromatic copy of $H$. By compactness, $f'(x,y_0,\dots,y_{n-1}):=f(y_0,\dots,y_{n-1},x)$ encodes $G_{n+1,p}$ as a partite hypergraph. Therefore, since the function $f'(x,y_0,\dots,y_{n-1})$ is in $\mathcal{F}_{X/E}$, by Proposition \ref{Hyperim: IP_n iff encoding partite}, $X/E$ has $IP_n$.
    %Since each finite $(n+1)$-partite $(n+1)$-uniform hypergraph is encoded by some $f_\sigma$, by compactness there is $f_\sigma$ encoding $G_{n+1,p}$. Thus $f_\sigma$ has $IP_n$ and so, by the previous paragraph, we conclude that $f(y_0,\dots,y_{n-1},x)$ has $IP_n$.
\end{proof}

We finish the section with a characterization of $n$-dependent hyperdefinable sets analogous to the one in Theorem \ref{n-dep and collapsing}. Recall the definition of the family $\Psi^{n+1}_{\mathcal{F}_{X/E}}$ from Notation \ref{notation PSI}.

%%%Adrian: I don't know if I should repeat the red part in the theorem since I already stated several tymes what the set $\Psi^{n+1}_{\mathcal{F}_{X/E}}$ is.
\begin{teor}
The following are equivalent:
    \begin{enumerate}
        \item $X/E$ is $n$-dependent.
        \item Every $G_{n+1,p}$-indiscernible $(a_g)_{g\in G_{n+1,p}}$ where for every $g\in P_0(G_{n+1,p})$ we have $a_g\in X/E$ is $\mL_{op}$-indiscernible.
        \item For every $m\in \mathbb{N}$, every $G_{n+1}$-indiscernible with respect to $\Psi^{n+1}_{\mathcal{F}_{X/E}}$ sequence of elements of $\C^m\times X$  is order-indiscernible with respect to $ \Psi^{n+1}_{\mathcal{F}_{X/E}}$.
    \end{enumerate}
\end{teor}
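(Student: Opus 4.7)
The plan is to prove $(1)\Leftrightarrow(2)$ and $(1)\Leftrightarrow(3)$, mirroring the structure of the proof of Theorem \ref{n-dep and collapsing} while leveraging the hyperdefinable analogues Proposition \ref{Hyperim: IP_n iff encoding partite}, Lemma \ref{IP_n hyperdef. set iif formulas}, and Proposition \ref{Hyperim: IP_n iff coding nonpartite}.

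The implications $(2)\Rightarrow(1)$ and $(3)\Rightarrow(1)$ go via contrapositives. Assuming $IP_n$ of $X/E$, Lemma \ref{IP_n hyperdef. set iif formulas} combined with Proposition \ref{Hyperim: IP_n iff encoding partite} produces an $f\in\mathcal{F}_{X/E}$ encoding $G_{n+1,p}$ as a partite hypergraph witnessed by a $G_{n+1,p}$-indiscernible sequence $(a_g)_g$ with $a_g\in X$ for $g\in P_0$. Pushing those $P_0$-coordinates through the quotient map yields a sequence of the form required by $(2)$, and the gap between $f$-values on edges versus non-edges forbids $\mL_{op}$-indiscernibility. Similarly, Proposition \ref{Hyperim: IP_n iff coding nonpartite} supplies a $G_{n+1}$-indiscernible sequence in $\C^m\times X$ on which some $\psi\in\Psi^{n+1}_{\mathcal{F}_{X/E}}$ separates edges from non-edges, refuting order-indiscernibility with respect to $\Psi^{n+1}_{\mathcal{F}_{X/E}}$.

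For $(1)\Rightarrow(2)$, suppose $(a_g)_{g\in G_{n+1,p}}$ with $a_g\in X/E$ for $g\in P_0$ is $G_{n+1,p}$-indiscernible but not $\mL_{op}$-indiscernible. Fact \ref{ Sequence of adjacent graphs} reduces the situation to adjacent $W=g_0\dots g_nV$ and $W'=g'_0\dots g'_nV$ with $R(g_0,\dots,g_n)$ and $\neg R(g'_0,\dots,g'_n)$; because $g_0,g'_0\in P_0$ by the partite and ordering constraints, $a_{g_0},a_{g'_0}\in X/E$. Fact \ref{Isomorphic copy of random partite hypergraph} then produces $X_0<\dots<X_n$ with $G':=X_0\cdots X_n\cong G_{n+1,p}$ and $X_0\subseteq P_0$, on which the type $\tp((a_h)_{h\in h_0\dots h_nV})$ equals one of two distinct types according to whether $R(h_0,\dots,h_n)$ holds. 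By randomness of $G_{n+1,p}$ inside $G'$, for any finite $w\subseteq\omega^n$ and fixed enumerations of $X_1,\dots,X_n$ we can pick $h_0^w\in X_0$ realizing the prescribed edge pattern; absorbing the $V$-tuples into the parameter types exhibits $IP_n$ of $X/E$ via the elements $a_{h_0^w}\in X/E$.

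The most delicate direction is $(1)\Rightarrow(3)$, which combines the adjacency argument with the Ramsey step of Lemma \ref{IP_n f and psi}. Given a failure of order-indiscernibility with respect to $\Psi^{n+1}_{\mathcal{F}_{X/E}}$ witnessed by some $\psi=\psi_{f'}$ and $\{<\}$-isomorphic $W,W'$, Facts \ref{ Sequence of adjacent graphs} and \ref{Isomorphic copy of random partite hypergraph} applied in the nonpartite setting produce $G'\cong G_{n+1,p}$ inside $G_{n+1}$ on which $\psi$, with parameters from $V$, encodes $G_{n+1,p}$ as a partite hypergraph. Colour each edge of $G'$ by the first permutation $\sigma$ achieving the minimum in $\psi_{f'}=\min_\sigma f_\sigma$; ERP of $\age(G_{n+1,p})$ yields a monochromatic sub-copy of $G_{n+1,p}$ on which a single $f_\sigma$ performs the encoding. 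Since every $a_g\in\C^m\times X$ has an $X$-component, reading off the appropriate components of the $a_g$ along $\sigma$ exhibits $f\in\mathcal{F}_{X/E}$ encoding a copy of $G_{n+1,p}$ as an $(n+1)$-partite $(n+1)$-uniform hypergraph with its $X$-variable pinned to the part on which the $X$-components sit; compactness and Proposition \ref{Hyperim: IP_n iff encoding partite} then yield that $f$ has $IP_n$, and Lemma \ref{IP_n hyperdef. set iif formulas} gives $IP_n$ of $X/E$. The main technical obstacle is precisely this last step: carefully tracking the reindexing so that the Ramsey-monochromatic $G_{n+1,p}$-copy yields an encoding of $G_{n+1,p}$ by $f$ itself, with its distinguished $X$-variable placed in the first slot.
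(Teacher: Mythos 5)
Your proposal follows the same broad skeleton as the paper's proof — the directions $(2)\Rightarrow(1)$ and $(3)\Rightarrow(1)$ go by contrapositive exactly as you describe, and $(1)\Rightarrow(2)$ reruns the $(1)\Rightarrow(2)$ argument from Theorem \ref{n-dep and collapsing} on a sequence of representatives. Two points deserve comment.

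For $(1)\Rightarrow(3)$ the paper is considerably more direct than your sketch. Because every function in $\Psi^{n+1}_{\mathcal{F}_{X/E}}$ takes exactly $n+1$ blocks of variables, a failure of order-indiscernibility with respect to $\Psi^{n+1}_{\mathcal{F}_{X/E}}$ is witnessed by two $(n+1)$-element sets $W,W'$ with the same order type. The $V$ appearing in the adjacency chain is thus forced to be empty, so there are no parameters from $V$ in play — the "with parameters from $V$" in your sketch is a phantom. Moreover, since the witnessing sets differ on at most the single possible $R$-edge, $G_{n+1}$-indiscernibility with respect to $\Psi^{n+1}_{\mathcal{F}_{X/E}}$ together with the symmetry of $R$ and of $\psi_f$ immediately gives that $\psi_f$ encodes $G_{n+1}$ as a hypergraph; then Proposition \ref{Hyperim: IP_n iff coding nonpartite} yields $IP_n$ without any need to re-run Fact \ref{Isomorphic copy of random partite hypergraph}, the edge colouring, or ERP — all of that is already packaged inside Proposition \ref{Hyperim: IP_n iff coding nonpartite}. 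Your route is not incorrect, but it duplicates work that the cited proposition is designed to encapsulate.

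For $(1)\Rightarrow(2)$ your phrase ``absorbing the $V$-tuples into the parameter types'' hides a genuine subtlety that the paper handles explicitly. When $V$ contains vertices from $P_0(G_{n+1,p})$, the corresponding $a_g$ are elements of $X/E$, hence (in general) classes of infinite tuples. After naming the parameter tuple $A$ and permuting variables as in Remark \ref{naming parameters and dummy variables}, one obtains a function in infinitely many variables that lives over a mixed product of $X$ and $\C$ and is therefore not automatically a member of $\mathcal{F}_{X/E}$; nor can the resulting types be straightforwardly viewed as elements of $S_{X/E\times\C^m}(\emptyset)$ for finite $m$ as required by Definition \ref{Definition: hyperim IP_n}. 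The paper resolves this by noting that the resulting function is a uniform limit of functions from $\mathcal{F}_{X/E}$, from which a suitable function in $\mathcal{F}_{X/E}$ still witnessing $IP_n$ can be extracted. You should make this step explicit rather than leave it implicit in ``absorbing''.
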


\begin{proof}
            $ (1)\implies (2)$. Let $(a_g)_{g\in G_{n+1,p}}$ be a $G_{n+1,p}$-indiscernible sequence where for every $g\in P_0(G_{n+1,p})$ we have $a_g\in X/E$ which is not $\mL_{op}$-indiscernible and let $(a'_g)_{g\in G_{n+1,p}}$ be a sequence of representatives. Then, by Lemma \ref{diff type diff f value}, there are $\mL_{op}$-isomorphic $W,W'\subset G_{n+1}$ subsets of size $m$, a function $f(x_0,\dots,x_{m-1})$ and $r<s\in \mathbb{R}$ such that $f( (a'_g)_{g\in W}) \leq r$ and $f( (a'_g)_{g\in W'})\geq s$ (where the elements $a'_g$ are substituted for the variables $x_0,\dots,x_{m-1}$ according to the ordering on $W$ and $W'$). Without loss of generality, by Fact \ref{ Sequence of adjacent graphs}, we may assume that $W$ is $V$-adjacent to $W'$ for some subset $V$ such that $W=g_0\dots g_n V$, $W'=g'_0\dots g'_n V$, $R(g_0,\dots g_n)$ and $\neg R(g'_0,\dots g'_n)$. 
            Following as in the proof of $(1) \implies (2)$ of Theorem \ref{n-dep and collapsing}, we arrive at the conclusion that $f'(x,y_0,\dots,y_n,A)$ has $IP_n$, where $A=(a'_g)_{g\in V}$. The tuple $A$ is contained in some product (with repetition) of $X$ and $\C$. Hence, the function $g$ obtained at the end of the proof of this implication in Theorem \ref{n-dep and collapsing} possibly has several infinite tuples of variables each of which corresponds to $X$. Thus, when performing the change of variables done in Remark \ref{naming parameters and dummy variables} (1) we might end with infinite tuples of variables. However, since this new function $g(x,z_1\dots,z_n)$ is the uniform limit of functions from the family $\mathcal{F}_{X/E}$, we might find a suitable formula $g'\in \mathcal{F}_{X/E}$ witnessing $IP_n$.
            
            $(1)\implies (3)$. %TERMINAR HACER ESTA DEMOSTRACION.  
            Let $(a_g)_{g\in G_{n+1}}$ be a $G_{n+1}$-indiscernible with respect to $\Psi^{n+1}_{\mathcal{F}_{X/E}}$ sequence which is not order-indiscernible with respect to $\Psi^{n+1}_{\mathcal{F}_{X/E}}$. Then there are $W,W'\subset G_{n+1}$ subsets of size $n+1$, a function $\psi_f(x_0,\dots,x_{n})$ and $r<s\in \mathbb{R}$ such that $\psi_f( (a_g)_{g\in W}) \leq r$ and $\psi_f( (a_g)_{g\in W'})\geq s$. By Fact \ref{ Sequence of adjacent graphs} and the fact that $G_{n+1}$ is self-complementary, %we may assume that $W$ is $V$-adjacent to $W'$ for some subset $V$ such that $W=g_0\dots g_n V$, $W'=g'_0\dots g'_n V$, $R(g_0,\dots g_n)$ and $\neg R(g'_0,\dots g'_n)$. In fact, $V=\emptyset$.
            we may assume $W=g_0\dots g_n$, $W'=g'_0\dots g'_n$, $R(g_0,\dots g_n)$ and $\neg R(g'_0,\dots g'_n)$.

            %Now we apply the fact \ref{Isomorphic copy of random partite hypergraph} to $V=\emptyset$ and $g_0,\dots,g_n$. This yields $G'\subseteq G_{n+1,p}$ such that for every $i\in \{0,n\}$ and $h_i\in P_i(G')$ $$ R(h_{0},\dots,h_{n}) \iff h_0\dots h_n \cong_{\mL_{opg}}W $$ and $$ \neg R(h_0,\dots,h_n) \iff h_0\dots h_n\cong_{\mL_{opg}}W'.$$ 
            By $G_{n+1}$-indiscernibility of $(a_g)_{g\in G_{n+1}}$ with respect to  $\Psi^{n+1}_{\mathcal{F}_{X/E}}$ and by symmetry of the relation $R$ and $\psi_f$, this implies that 
            $$ \psi_f(a_{h_0},\dots, a_{h_n})\leq r \iff R(h_{0},\dots,h_{n})$$
            and
            $$\psi_f(a_{h_0},\dots, a_{h_n})\geq s \iff \neg R(h_{0},\dots,h_{n}).$$ By Proposition \ref{Hyperim: IP_n iff coding nonpartite}, the set $X/E$ has $IP_n$.

            $(2)\implies(1)$ It follows from Proposition \ref{Hyperim: IP_n iff encoding partite}. If the function $f\in \mathcal{F}_{X/E}$ encodes $G_{n+1,p}$ witnessed by a $G_{n+1,p}$-indiscernible sequence $(a_g)_{g\in G_{n+1,p}}$, then $(a_g)_{g\in G_{n+1,p}}$ cannot be $\mL_{op}$-indiscernible.
            
            $(3)\implies(1)$ It follows from Proposition \ref{Hyperim: IP_n iff coding nonpartite}. If $T$ has $IP_n$, there is a function $\psi_f\in \Psi^{n+1}_{\mathcal{F}_{X/E}}$ encoding $G_{n+1}$ witnessed by a $G_{n+1}$-indiscernible sequence $(a_g)_{g\in G_{n+1}}$. Then, $(a_g)_{g\in G_{n+1}}$ cannot be order-indiscernible respect to $ \Psi^{n+1}_{\mathcal{F}_{X/E}}$.
\end{proof}
Note that the results of this section easily generalise to deal with $n$-dependence of imaginary sorts in continuous logic. We finish the chapter with an example illustrating that, in general, the theorem above is optimal. Namely, for an $n$-dependent hyperdefinable set $X/E$ and $n'>n$ there might be a $G_{n+1}$-indiscernible sequence of elements of $\C^m\times X$ for some $m<\omega$ which are not order-indiscernible with respect to $\Psi^{n'+1}_{\mathcal{F}_{X/E}}$ or with respect to more general families of functions from $\mathcal{F}_{(\C^m \times X/E)^{n+1}}$.
\begin{example}\label{example optimal}
    Let $\mathcal{N}$ be a monster model of a NIP theory and $\mathcal{R}$ a monster model of the theory of random ordered graphs. We consider the structure $\mathcal{N}\sqcup \mathcal{R}$ i.e. the structure with disjoint sorts for $\mathcal{N}$ and $\mathcal{R}$ with no interaction between the sorts. Let $X=\mathcal{N}$ and $E$ be the equality relation. Clearly, $X/E$ has NIP.

    \begin{claim}
        Let $n\geq 1$. The sequence $(a_g)_{g\in G_2}:=(\overbrace{g,\dots,g}^m,n_0)_{g\in G_2}$ is $G_2$-indiscernible but it is not order-indiscernible with respect to the family of formulas $f(x_0,\dots,x_n)$ where each $x_i$ is a tuple $(x_i^0,\dots,x_i^m)$ of variables of length $m+1$ whose last coordinate corresponds to $X$.
    \end{claim}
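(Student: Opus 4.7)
The plan is to verify the two assertions of the claim separately. For $G_2$-indiscernibility, I would observe that in $\mathcal{N}\sqcup\mathcal{R}$ the types decompose along sorts since the sorts do not interact. Because $n_0$ is fixed across the sequence, the complete type $\tp(a_{g_1},\dots,a_{g_k})$ over $\emptyset$ is determined by $\tp(g_1,\dots,g_k)$ computed in the sort $\mathcal{R}$. Since $\Th(\mathcal{R})=\Th(G_2)$ is the theory of a Fraïssé limit in a finite relational language, it admits quantifier elimination, so $\tp(g_1,\dots,g_k)=\qftp(g_1,\dots,g_k)$. Hence if $W,W'\subseteq G_2$ satisfy $\qftp(W)=\qftp(W')$, then the corresponding tuples in $\mathcal{N}\sqcup\mathcal{R}$ have the same complete type, establishing $G_2$-indiscernibility.

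For the failure of order-indiscernibility with respect to the prescribed family, the main step is to exhibit a witnessing formula. I would take $f(x_0,\dots,x_n):=R(x_0^0,x_1^0)$, the graph relation applied to the first coordinates of $x_0$ and $x_1$, viewed as a $\{0,1\}$-valued continuous formula (legitimate since $\mathcal{N}\sqcup\mathcal{R}$ is first-order and thus carries the discrete metric). This $f$ belongs to the specified family: it takes $n+1$ tuples of length $m+1$ as arguments, and only uses coordinates in the sort $\mathcal{R}$ (the last coordinate, which corresponds to $X$, is unused). By randomness of $G_2$, one can select $g_0<g_1<\dots<g_n$ with $R(g_0,g_1)$ and $g'_0<g'_1<\dots<g'_n$ with $\neg R(g'_0,g'_1)$. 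Both tuples realize the same quantifier-free $\{<\}$-type, yet $f(a_{g_0},\dots,a_{g_n})=R(g_0,g_1)\neq R(g'_0,g'_1)=f(a_{g'_0},\dots,a_{g'_n})$, so the sequence fails to be order-indiscernible with respect to this family.

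No serious obstacle arises in the verification itself. The conceptual point underlying the example (and the reason the counterexample does not refute the main theorem) is that $R(x_0^0,x_1^0)$ is neither obtained from a function in $\mathcal{F}_{X/E}$ (it ignores the $X$-coordinate entirely) nor symmetric in the sense required by the symmetrization $\psi_{f'}$ that defines $\Psi^{n+1}_{\mathcal{F}_{X/E}}$. Thus the example genuinely distinguishes the larger family $\mathcal{F}_{(\mathcal{C}^m\times X/E)^{n+1}}$, and for $n\geq 1$ the family $\Psi^{n'+1}_{\mathcal{F}_{X/E}}$ with $n'>1$, from the family $\Psi^{2}_{\mathcal{F}_{X/E}}$ that the preceding theorem actually controls.
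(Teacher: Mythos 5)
Your proof is correct and takes essentially the same route as the paper: the witness formula is the same $f(x_0,\dots,x_n):=R(x_0^0,x_1^0)$, and the only cosmetic difference is that the paper reuses the tail $g_1,\dots,g_n$ in both tuples (taking $g_0'<g_0<g_1<\dots<g_n$ with $R(g_0,g_1)$, $\neg R(g_0',g_1)$) while you pick two independent increasing tuples, which works equally well by randomness. You also spell out the $G_2$-indiscernibility half via the sort decomposition and quantifier elimination in the random ordered graph, a step the paper leaves implicit.
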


    \begin{proof}[Proof of the first claim]
    Let $g_0'<g_0<g_1<\dots<g_n\in G_2$ be such that $R(g_0,g_1)$ and $\neg R(g_0',g_1)$ and let $f(x_0,\dots,x_n):= R(x_0^0,x_1^0)$. Clearly, the tuples $(g_0,g_1,\dots,g_n)$ and $(g'_0,g_1,\dots,g_n)$ have the same order type but $f(a_{g_0},\dots,a_{g_n})\neq f(a_{g'_0},\dots,a_{g_n}) $.
    \end{proof}

    \begin{claim}
        For $n'>1$, the sequence $(a_g)_{g\in G_2}:=(\overbrace{g,\dots,g}^{n'},n_0)_{g\in G_2}$ is $G_2$-indiscernible but it is not order-indiscernible with respect to the family of functions $\Psi^{n'+1}_{\mathcal{F}_{X/E}}$.
    \end{claim}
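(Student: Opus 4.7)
The plan is to first note that $G_2$-indiscernibility of $(a_g)_{g \in G_2}$ follows by the same argument as in the previous claim: $\mathrm{Th}(G_2)$ eliminates quantifiers in $\{<, R\}$, and the last coordinate of every $a_g$ is the fixed element $n_0$, so equal qf-types of index-tuples in $G_2$ force equal types of the corresponding image tuples $(a_{g_1}, \ldots, a_{g_k})$ in $\mathcal{N} \sqcup \mathcal{R}$.

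For the failure of order-indiscernibility with respect to $\Psi^{n'+1}_{\mathcal{F}_{X/E}}$, I would exhibit a single function $\psi_{f'} \in \Psi^{n'+1}_{\mathcal{F}_{X/E}}$ that separates two increasing $(n'+1)$-tuples of indices from $G_2$ whose only structural difference is the presence or absence of an edge. Because $n' > 1$, any $f \in \mathcal{F}_{X/E}$ of the form $f(x, y_0, \ldots, y_{n'-1})$ has room for (at least) two variables $y_0, y_1$ in the $\mathcal{R}$-sort. I would take
$$f(x, y_0, y_1, \ldots, y_{n'-1}) := R(y_0, y_1),$$
viewed as a $\{0,1\}$-valued continuous formula (with value $0$ meaning that the edge holds); this lies in $\mathcal{F}_{X/E}$ since $E$ is equality and $f$ extends to an $\emptyset$-definable continuous predicate on the appropriate product of sorts. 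Setting $f'(y_0, \ldots, y_{n'-1}, x) := f(x, y_0, \ldots, y_{n'-1})$ as in Notation \ref{notation PSI} and using symmetry of $R$, one computes
$$\psi_{f'}(a_{g_0}, \ldots, a_{g_{n'}}) \;=\; \min_{\sigma \in \mathrm{Sym}(\{0,\ldots,n'\})} R(g_{\sigma(0)}, g_{\sigma(1)}) \;=\; \min\{R(g_i, g_j) : i \neq j \in \{0, \ldots, n'\}\},$$
since the pair $(\sigma(0),\sigma(1))$ ranges over all ordered pairs of distinct elements of $\{0,\ldots,n'\}$.

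Finally, invoking randomness of $G_2$, I would produce two strictly increasing $(n'+1)$-tuples $g_0 < \cdots < g_{n'}$ and $h_0 < \cdots < h_{n'}$ in $G_2$, the first being $R$-independent and the second containing at least one edge; both configurations exist inside any sufficiently rich interval of $G_2$ because $n' + 1 \geq 3$. These tuples share the same qf-type in the pure order language $\{<\}$, yet $\psi_{f'}(a_{g_0},\ldots,a_{g_{n'}}) = 1 \neq 0 = \psi_{f'}(a_{h_0},\ldots,a_{h_{n'}})$, which shows that the sequence fails to be order-indiscernible with respect to $\Psi^{n'+1}_{\mathcal{F}_{X/E}}$. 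The main obstacle is purely bookkeeping: matching the arity conventions of $\psi_{f'}$, $\Psi^{n'+1}_{\mathcal{F}_{X/E}}$ and the tuple length $|a_g| = n'+1$, and noting that the hypothesis $n' > 1$ is precisely the threshold at which $f$ can carry the two $y$-variables in the $\mathcal{R}$-sort needed to detect an edge.
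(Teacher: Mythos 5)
Your argument is correct and follows essentially the same strategy as the paper's: build $f(x,y_0,\ldots,y_{n'-1}):=R(y_0,y_1)\in\mathcal{F}_{X/E}$, observe that the symmetrization $\psi_{f'}$ evaluated on the tuples $a_g=(g,\ldots,g,n_0)$ reduces to $\min\{R(g_i,g_j):i\neq j\}$, and produce two order-isomorphic $(n'+1)$-tuples on which this minimum differs. The paper only writes out the case $n'=2$ (with $f(y,z,x)=R(y,z)$), whereas you carry out the general case; and your tuple choice — an $R$-independent tuple versus one containing at least one edge — is the clean one for the symmetrized formula, since after minimizing over all permutations $\psi_{f'}$ can only see whether an edge is present somewhere in the tuple. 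The paper instead compares a complete triple $(g_0,g_1,g_2)$ with $(g'_0,g_1,g_2)$ where $\neg R(g'_0,g_1)$ but $R(g_1,g_2)$ and $R(g'_0,g_2)$ still hold; with the continuous-logic convention that a classical relation takes value $0$ when true, both of those triples contain an edge and $\psi_f$ evaluates to $0$ on each, so your choice (independent vs.\ non-independent) is in fact the one that makes the discrepancy visible. In short: same approach, slightly more general, and with a small improvement in the choice of witnessing tuples.
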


    \begin{proof}[Proof of the second claim]
        We show it for $n'=2$. %and $m=2$. 
        Let $f(y,z,x):=R(y,z)$. Then the formula $$\psi_f(y_0z_0x_0,y_1z_1x_1,y_2z_2y_2):=\min_{\sigma\in Sym(2)} f(y_{\sigma(0)},z_{\sigma(1)},x_{\sigma(2)})$$ belongs to $\Psi^{3}_{\mathcal{F}_{X/E}}$. However, taking  $g_0'<g_0<g_1<g_2$ such that \begin{itemize}
            \item $R(g_0,g_1)$, $R(g_0,g_2)$, $R(g_1,g_2)$
            \item $R(g'_0,g_2)$ and  $\neg R(g'_0,g_1)$
        \end{itemize}
        we have that the tuples $(g_0,g_1,g_2)$ and $(g_0',g_1,g_2)$ are order-isomorphic and $$\psi_f(g_0g_0n_0,g_1g_1n_0,g_2g_2n_0)\neq \psi_f(g'_0g'_0n_0,g_1g_1n_0,g_2g_2n_0).$$
    \end{proof}
\end{example}
\printbibliography

@article{10.1215/00294527-2022-0023,
author = {Krzysztof Krupiński and Adri{\'a}n Portillo},
title = {{On Stable Quotients}},
volume = {63},
journal = {Notre Dame Journal of Formal Logic},
number = {3},
publisher = {Duke University Press},
pages = {373 -- 394},
keywords = {distal theory, hyperimaginary, model-theoretic components, stable quotient},
year = {2022},
doi = {10.1215/00294527-2022-0023}
}

@article{Chernikov2020HypergraphRA,
  title={Hypergraph regularity and higher arity VC-dimension},
  author={Artem Chernikov and Henry Towsner},
  journal={ArXiv},
  year={2020},
  volume={abs/2010.00726}
}

@article{Chernikov2014OnN,
  title={On n-Dependence},
  author={Artem Chernikov and Daniel Palac{\'i}n and Kota Takeuchi},
  journal={Notre Dame J. Formal Log.},
  year={2014},
  volume={60},
  pages={195-214}
}

@article{SCOW20121624,
title = {Characterization of NIP theories by ordered graph-indiscernibles},
journal = {Annals of Pure and Applied Logic},
volume = {163},
number = {11},
pages = {1624-1641},
year = {2012},
doi = {https://doi.org/10.1016/j.apal.2011.12.013},
author = {Lynn Scow},
keywords = {Classification theory, Ramsey classes, Generalized indiscernibles},
}

@article{10.1215/00294527-3132797,
author = {Lynn Scow},
title = {{Indiscernibles, EM-Types, and Ramsey Classes of Trees}},
volume = {56},
journal = {Notre Dame Journal of Formal Logic},
number = {3},
publisher = {Duke University Press},
pages = {429 -- 447},
keywords = {generalized indiscernibles, modeling property, Ramsey class},
year = {2015},
doi = {10.1215/00294527-3132797}
}

@article{Shelah2007DefinableGF,
  title={Definable groups for dependent and 2-dependent theories},
  author={Saharon Shelah},
  journal={arXiv: Logic},
  year={2007}
}

@article{Shelah2005StronglyDT,
  title={Strongly dependent theories},
  author={Saharon Shelah},
  journal={Israel Journal of Mathematics},
  year={2005},
  volume={204},
  pages={1-83}
}

@misc{kamsma2023positive,
      title={Positive indiscernibles}, 
      author={Mark Kamsma},
      year={2023},
      eprint={2305.14127},
      archivePrefix={arXiv},
      primaryClass={math.LO}
}

@article {MR2723787,
 AUTHOR = {Ben Yaacov, Ita\"{i}},
 TITLE = {Stability and stable groups in continuous logic},
 JOURNAL = {J. Symbolic Logic},
 FJOURNAL = {The Journal of Symbolic Logic},
 VOLUME = {75},
 YEAR = {2010},
 NUMBER = {3},
 PAGES = {1111--1136},
 MRCLASS = {03C45 (03C60 03C90)},
 MRNUMBER = {2723787},
 MRREVIEWER = {Eric Jaligot},
 DOI = {10.2178/jsl/1278682220}
}

@article {MR2657678,
 AUTHOR = {Ben Yaacov, Ita\"{i} and Usvyatsov, Alexander},
 TITLE = {Continuous first order logic and local stability},
 JOURNAL = {Trans. Amer. Math. Soc.},
 FJOURNAL = {Transactions of the American Mathematical Society},
 VOLUME = {362},
 YEAR = {2010},
 NUMBER = {10},
 PAGES = {5213--5259},
 MRCLASS = {03C45 (03C48 03C90)},
 MRNUMBER = {2657678},
 MRREVIEWER = {Markus Junker},
 DOI = {10.1090/S0002-9947-10-04837-3}
}

@article{SCOW2021102891,
title = {Ramsey transfer to semi-retractions},
journal = {Annals of Pure and Applied Logic},
volume = {172},
number = {3},
pages = {102891},
year = {2021},
issn = {0168-0072},
doi = {https://doi.org/10.1016/j.apal.2020.102891},
author = {Lynn Scow},
keywords = {Generalized indiscernible sequences, Modeling property, Ramsey classes, Ordered structures, NIP theories}
}

@article{Dobrowolski2021KimindependenceIP,
  title={Kim-independence in positive logic},
  author={Jan Cz. Dobrowolski and Mark Kamsma},
  journal={Model Theory},
  year={2021},
  url={https://api.semanticscholar.org/CorpusID:234741810}
}

@article{Shelah1982-SHECTA-5,
	author = {S. Shelah},
	doi = {10.2307/2273599},
	journal = {Journal of Symbolic Logic},
	number = {3},
	pages = {694--696},
	publisher = {Association for Symbolic Logic},
	title = {Classification Theory and the Number of Nonisomorphic Models},
	volume = {47},
	year = {1982}
}

@article{LASKOWSKI2003263,
title = {Karp complexity and classes with the independence property},
journal = {Annals of Pure and Applied Logic},
volume = {120},
number = {1},
pages = {263-283},
year = {2003},
issn = {0168-0072},
doi = {https://doi.org/10.1016/S0168-0072(02)00080-5},
author = {M.C. Laskowski and S. Shelah},
keywords = {Independence property, Coding graphs, Non-structure theorems}
}

@article{1a7d4f5c-0dcd-37ec-b761-69bf07cad14f,
 ISSN = {00224812},
 author = {Fred G. Abramson and Leo A. Harrington},
 journal = {The Journal of Symbolic Logic},
 number = {3},
 pages = {572--600},
 publisher = {Association for Symbolic Logic},
 title = {Models Without Indiscernibles},
 volume = {43},
 year = {1978}
}

@article{NESETRIL1983183,
title = {Ramsey classes of set systems},
journal = {Journal of Combinatorial Theory, Series A},
volume = {34},
number = {2},
pages = {183-201},
year = {1983},
issn = {0097-3165},
doi = {https://doi.org/10.1016/0097-3165(83)90055-9},
url = {https://www.sciencedirect.com/science/article/pii/0097316583900559},
author = {Jaroslav Nešetřil and Vojtěch Rödl}
}

@article{NESETRIL1977289,
title = {Partitions of finite relational and set systems},
journal = {Journal of Combinatorial Theory, Series A},
volume = {22},
number = {3},
pages = {289-312},
year = {1977},
issn = {0097-3165},
doi = {https://doi.org/10.1016/0097-3165(77)90004-8},
url = {https://www.sciencedirect.com/science/article/pii/0097316577900048},
author = {Jaroslav Nešetřil and Vojtěch Rödl}
}

@misc{MZiegler,
  author        = {M. Ziegler},
  title         = {Stabilitätstheorie},
  year          = {1988},
    url = {https://home.mathematik.uni-freiburg.de/ziegler/skripte/stabilit.pdf}
}

@article{HaskelPilllay,
 ISSN = {00224812, 19435886},
 URL = {https://www.jstor.org/stable/26600311},
 abstract = {For G a group definable in a saturated model of a NIP theory T, we prove that there is a smallest type-definable subgroup H of G such that the quotient G/H is stable. This generalizes the existence of G00, the smallest type-definable subgroup of G of bounded index.},
 author = {Mike Haskel and Anand Pillay},
 journal = {The Journal of Symbolic Logic},
 number = {1},
 pages = {pp. 117--122},
 publisher = {[Association for Symbolic Logic, Cambridge University Press]},
 title = {On maximal stable quotients of definable groups in NIP theories},
 urldate = {2024-02-29},
 volume = {83},
 year = {2018}
}

@article{Guingona2019,
	author = {Guingona, Vincent and Hill, Cameron Donnay},
	copyright = {Springer-Verlag GmbH Germany, part of Springer Nature},
	doi = {10.1007/s00153-018-0635-2},
	journal = {Archive for Mathematical Logic},
	keywords = {Indiscernibles; Dividing-lines; Combinatorics; Fraisse class; Complexity; Hierarchies},
	month = {6},
	number = {3-4},
	pages = {289-323},
	title = {On positive local combinatorial dividing-lines in model theory},
	volume = {58},
	year = {2019},
}

@misc{anderson2023fuzzy,
      title={Fuzzy VC Combinatorics and Distality in Continuous Logic}, 
      author={Aaron Anderson},
      year={2023},
      eprint={2310.04393},
      archivePrefix={arXiv},
      primaryClass={math.LO}
}

@article{ContVCclasses,
	author = {Ben Yaacov, Ita\"{i}},
	journal = {Israel Journal of Mathematics},
	keywords = {Indiscernibles; Dividing-lines; Combinatorics; Fraisse class; Complexity; Hierarchies},
	number = {1},
	pages = {309-333},
	title = {Continuous and random Vapnik-Chervonenkis classes},
	volume = {179},
	year = {2009},
}
\end{document}